\documentclass[11pt]{amsart}
\usepackage{latexsym,color,amsmath,systeme, amsthm,amssymb,amscd,amsfonts}
\usepackage{graphicx}

\usepackage{hyperref}
\usepackage{pgf,tikz,pgfplots, pgfplotstable}
\pgfplotsset{compat=1.14}
\usepackage{float}
\usepackage{mathrsfs}
\usetikzlibrary{arrows, spy,intersections, pgfplots.fillbetween}

\usepackage{tkz-euclide}
\usepackage[style=english]{csquotes}
\usepackage{enumerate}
\usepackage{lscape}
\usepackage[width=\textwidth]{caption}
\usepackage{subcaption}
\usepackage{amsmath}
\usepackage{amsfonts}
\usepackage{amssymb}
\usepackage{mleftright}
\usepackage{cleveref}
\usepackage{tkz-tab}
\usepackage{siunitx}
\usetikzlibrary{angles, quotes}
\usepackage{mathtools}

\usetikzlibrary{decorations.pathmorphing}
\tikzset{discont/.style={decoration={zigzag,segment length=12pt, amplitude=4pt},decorate}}
\pgfplotsset{compat=1.11}
\usetikzlibrary{intersections,fillbetween}
 \usetikzlibrary{decorations.text}
\usetikzlibrary{shapes,arrows,intersections, backgrounds}
\usetikzlibrary{scopes,svg.path,shapes.geometric,shadows}
\DeclareCaptionSubType*[arabic]{figure}
\DeclareUnicodeCharacter{2212}{\textendash}
\DeclareUnicodeCharacter{2217}{\textendash}
\DeclareUnicodeCharacter{2032}{\textendash}

\tikzset{
	v1/.style={line width=.5pt,blue!33!black},
	v2/.style={line width=.5pt,blue!66!black},
	v3/.style={line width=.5pt,blue!33},
	v4/.style={line width=.5pt,blue!66},
	v5/.style={line width=.5pt,black}
}
\newcommand{\boundellipse}[3]
{(#1) ellipse (#2 and #3)
}
\setlength{\textwidth}{6.0in} 
\setlength{\evensidemargin}{0.25in}
\setlength{\oddsidemargin}{0.25in} 
\setlength{\textheight}{9.0in}
\setlength{\topmargin}{-0.5in} 
\setlength{\parskip}{2mm}
\setlength{\baselineskip}{1.7\baselineskip}

\definecolor{dred}{HTML}{C11B17}
\definecolor{dgreen}{HTML}{41A317}
\definecolor{dblue}{HTML}{00008B}
\definecolor{niceblue}{HTML}{00008B}
\definecolor{brilliantrose}{rgb}{1.0, 0.33, 0.64}
\definecolor{gold}{HTML}{ffd700}
\definecolor{lgold}{HTML}{ffe140}

\newcommand{\cemph}[1]{\textcolor{dred}{\emph{#1}}}
\newcommand{\opt}{\operatorname{opt}}

\newenvironment{smallpmatrix}{\left( \begin{smallmatrix}}{\end{smallmatrix} \right)}
\newcommand{\smallmat}[1]{\begin{smallpmatrix} #1 \end{smallpmatrix}}

\newcommand{\R}{\mathbb R}
\newcommand{\K}{\mathcal K}

\newcommand{\B}{\mathbb{B}}

\newcommand{\wrt}{w.r.t.}

\newcommand{\conv}{\mathrm{conv}}
\newcommand{\bd}{\mathrm{bd}}
\newcommand{\ext}{\mathrm{ext}}
\newcommand{\pos}{\mathrm{pos}}
\newcommand{\aff}{\mathrm{aff}}

\newcommand{\inter}{\mathrm{int}}
\newcommand{\lin}{\mathrm{lin}}

\newcommand{\norm}[1]{\|#1 \|}
\newcommand{\GH}{{\mathbb{GH}}}
\NewDocumentCommand{\set}{+m+g}
{%
  \IfNoValueTF{#2}
  {%
    \mleft\{#1\mright\}
  }%
  {%
     \mleft\{#1 : #2 \mright\}
  }%
}%


\newcommand{\optc}{\subset^{\operatorname{\opt}}}
\definecolor{zzttqq}{rgb}{0.6,0.2,0}
			\definecolor{ccqqqq}{rgb}{0.8,0,0}
\definecolor{ffvvqq}{rgb}{1,0.3333333333333333,0}
\definecolor{zzffqq}{rgb}{0.6,1,0}
\definecolor{qqwuqq}{rgb}{0,0.39215686274509803,0}
\definecolor{ffzzqq}{rgb}{1,0.6,0}
\definecolor{ffqqqq}{rgb}{1,0,0}
\definecolor{zzttqq}{rgb}{0.6,0.2,0}
\definecolor{uuuuuu}{rgb}{0.26666666666666666,0.26666666666666666,0.26666666666666666}

\newtheorem{thm}{Theorem}[section]
\newtheorem{lemma}[thm]{Lemma}

\newtheorem{proposition}[thm]{Proposition}
\newtheorem{cor}[thm]{Corollary}

\usetikzlibrary{calc,intersections}
\begin{document}



\title[Bounding the diameter-width ratio]{Bounding the diameter-width ratio using containment inequalities of means of convex bodies }

\author[K. von Dichter]{Katherina von Dichter}
\author[M. Runge]{Mia Runge}
\thanks{2020 Mathematics Subject Classification: 52A40, 52A10.	}

\begin{abstract}
We completely describe the region of possible values of the diameter-width ratio for planar pseudo-complete sets in dependence of the Minkowski asymmetry.
In order to do this, we focus on the containment inequalities of $K \cap (-K)$ and $\frac{K-K}{2}$ for a Minkowski centered convex compact set $K$, i.e. we define $\tau(K)$ to be the smallest possible factor to cover $K \cap (-K)$ by a rescalation of $\frac{K-K}{2}$ and give the region of the possible values of $\tau(K)$ in the planar case in dependence of the Minkowski asymmetry of $K$. 
\vspace{-1cm}
\end{abstract}


\keywords{Convex sets, Symmetrizations, Symmetry Measures, Completeness, Geometric inequalities, Diameter, Width, Complete Systems of Inequalities
}

\maketitle
\section{Introduction and Notation}
We denote the family of \cemph{convex bodies} (full-dimensional compact convex sets) by $\K^n$. Any set $A \in \K^n$ fulfilling $A = t - A$ for some $t \in \R^n$ is called  \cemph{symmetric} and \cemph{0-symmetric} if $t=0$. The family of 0-symmetric bodies is denoted by $\K^n_0$. We write $K \subset_t C$, if there exists $t \in \R^n$, such that $K \subset C + t $. For $K,C \in \K^n$ 
the \cemph{circumradius} and the \cemph{inradius} of $K$ w.r.t.~$C$ 
are defined as $ R(K,C):=\inf\{\rho>0: K \subset_t \rho C\} \quad \text{and} \quad r(K,C) := \sup\{\rho > 0 : \rho C \subset_t  K\},$ while the \cemph{width} and the \cemph{diameter} 
are defined as $ w(K,C):= 2 r(K-K, C-C) \quad \text{and} \quad D(K,C):= 2 R(K-K, C-C).$
We say $K$ is of \cemph{constant width} \wrt~$C$, if $D(K,C)=w(K,C)$, while $K$ is \cemph{complete} \wrt~$C$ if $D(K',C)>D(K,C)$ for every $K' \supsetneq K$.

Bodies of constant width and completeness have been studied in numerous works, including \cite{Gro} (in euclidean spaces); \cite{Eg2, Eg4, MoSch} (in general Minkowski spaces). Constant width always implies completeness and in some cases, for instance, when $C$ is the euclidean ball or a planar set, completeness and constant width always coincide. Such gauges $C$ are called \cemph{perfect}. However, for every $n \ge 3$ there exist (symmetric) $C \in  \K^n$ such that completeness and constant width do not always coincide \cite{Eg2}. Characterizing perfect gauges is still a major open question in convex geometry (see e.g.~\cite{Eg2,MoSch}). More recently in \cite{Ri}, it is shown that for any $K\in \K^n$ complete with respect to $C \in \K^n_0$ holds $\frac{D(K,C)}{w(K,C)} \leq \frac{n+1}{2}$ and that this bound is sharp, if $n$ is odd and $K$ a simplex. This upper bound is improved in \cite{BDG2} by means of the \cemph{Minkowski asymmetry} of $K$. It is defined as 
\[
s(K):=\inf \{ \rho >0: K-c \subset \rho (c-K), \ c \in \R^n \},
\]
and a \cemph{Minkowski center} of $K$ is any $c \in \R^n$ such that $K-c \subset s(K)(c-K)$ \cite{BrG2}. If $0$ is a Minkowski center, we say $K$ is \cemph{Minkowski centered}. Note that $s(K) \in [1,n]$ for $K \in \K^n$; $s(K)=1$ if and only if $K$ is symmetric, and $s(K)=n$ if and only if $K$ is a simplex \cite{Gr}. The improved bound is given as $\frac{D(K,C)}{w(K,C)} \leq \frac{s(K)+1}{2}$.  Also sharpness for $n>2$ odd and any prescribed $s(K) \in [1,n]$ and for $n>2$ even and any $s(K) \in [1,n-1]$ is shown.

We say that $c\in\R^n$ is an \cemph{incenter} of $K$ with respect to $C$ if $c+r(K,C)C\subset K$. In \cite{MoSch} it is shown that all complete bodies satisfy $R(K,C)+r(K,C)=D(K,C)$, while in \cite{BrG2} that the latter condition is equivalent to $\frac{K-K}2 \subset \frac{D(K,C)}2 C \subset \frac{s(K)+1}2 (K-c) \cap (-(K-c))$ for every incenter $c$ of $K$.
This is especially valuable if one understands it as a necessary condition for the gauges $C$ such that a given body $K$ may be complete \wrt~$C$ (keeping in mind that constant width is equivalent to $\frac{K-K}2 = \frac{D(K,C)}2 C$ if $C$ is 0-symmetric). We call
$K \in \K^n$ that satisfies $r(K,C)+R(K,C)=D(K,C)$ to be \cemph{pseudo-complete} w.r.t.~$C$.

We now present the main result of this work. Let $\varphi=\frac{1+\sqrt{5}}{2}$ denote the \cemph{golden ratio} and 
\[ 
c(s) :=
    \begin{dcases}
     1 & s \leq \varphi, \\
    \frac{(s^2+1)^2}{(s^2-1)\left(s^2+2s-1 +2\sqrt{s(s^2-1)}\right)} & \varphi <s \leq \hat s, \\
    \frac{2(s^2-2s-1)}{(s-3)(s+1)}  & \hat s < s \leq 2,  \\
    \end{dcases}
\]
where $\hat s \approx 1.8536$ is such that $\frac{(\hat s^2+1)^2}{(\hat s^2-1)\left(\hat s^2+2\hat s-1 +2\sqrt{\hat s(\hat s^2-1)}\right)}= \frac{2(\hat s^2-2\hat s-1)}{(\hat s-3)(\hat s+1)}$.


\begin{thm}\label{thm:results_pscomp}
Let $K \in \K^2$, $C \in \K^2_0$ and $K$ be pseudo-complete w.r.t. $C$. Then 
\[ 
\frac{D(K,C)}{w(K,C)} \leq \frac{s(K)+1}2 c(s(K)) \leq \frac{\varphi+1}{2}\approx 1.31,
\]
$\frac{D(K,C)}{w(K,C)}=\frac{\varphi+1}{2}$ holds
if and only if $L(K)=\mathbb{GH}:=\conv\left(\left\{\begin{pmatrix} \pm 1 \\ 0\end{pmatrix}, \begin{pmatrix} \pm 1 \\ -1\end{pmatrix}, \begin{pmatrix} 0 \\ \varphi \end{pmatrix} \right\} \right) $
is the \cemph{golden house} and $L$ is some linear transformation.

Moreover, for every pair $(\rho,s)$ with $1\leq s \leq 2$ and $1 \leq \rho \leq \frac{s+1}2 c(s)$, there exists some Minkowski centered $K$, s.t.~$s(K)=s$, and a convex body $C$, s.t.~$K$ is pseudo-complete w.r.t. $C$ and $\frac{D(K,C)}{w(K,C)}=\rho$ (c.f.~\Cref{fig:Dwratio}).
\end{thm}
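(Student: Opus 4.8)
Since $C\in\K^2_0$ we have $C-C=2C$, hence $D(K,C)=R(K-K,C)$, $w(K,C)=r(K-K,C)$, and therefore
\[
\frac{D(K,C)}{w(K,C)}=\frac{R(K-K,C)}{r(K-K,C)}.
\]
Translate $K$ so that $0$ is an incenter of $K$ with respect to $C$. By the characterization of \cite{BrG2} recalled above, $K$ is pseudo-complete with respect to $C$ exactly when $\tfrac{K-K}{2}\subseteq\tfrac{D(K,C)}{2}C\subseteq\tfrac{s(K)+1}{2}(K\cap(-K))$; the left inclusion only says that $\tfrac{D(K,C)}{2}C$ is the smallest homothet of $C$ containing $\tfrac{K-K}{2}$, and conversely any $\lambda>0$ with $\tfrac{K-K}{2}\subseteq\lambda C$ satisfies $\lambda\ge\tfrac{D(K,C)}{2}$, so the right inclusion is inherited by $\tfrac{D(K,C)}{2}C$. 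Thus, for a fixed $K$ (normalized as above), the admissible gauges are, up to positive scaling, precisely the bodies in the ``shell''
\[
\mathcal S(K):=\bigl\{\,B\in\K^2_0 : \tfrac{K-K}{2}\subseteq B\subseteq\tfrac{s(K)+1}{2}\,(K\cap(-K))\,\bigr\},
\]
which is nonempty because $\tfrac{K-K}{2}\subseteq\tfrac{s(K)+1}{2}(K\cap(-K))$ always holds (with equality for a simplex), and the attained ratio equals $R(K-K,B)/r(K-K,B)$, a translation-invariant function of $K$ and scale-invariant in $B$.

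\textbf{The inequality and the rigidity.} For $B\in\mathcal S(K)$ one has $R(K-K,B)\le R(K-K,\tfrac{K-K}{2})=2$, and since $B\subseteq\tfrac{s(K)+1}{2}(K\cap(-K))$ and $2(K\cap(-K))\subseteq K-K$ also $r(K-K,B)\ge\tfrac{2}{s(K)+1}\,r(K-K,K\cap(-K))\ge\tfrac{4}{s(K)+1}$, which already gives the bound $\tfrac{s(K)+1}{2}$ of \cite{BDG2}. The sharper factor $c(s(K))$ comes from exploiting \emph{both} inclusions defining $\mathcal S(K)$ at once: the ratio-maximizing $B$ is pinned between the circumscribed homothet of $\tfrac{K-K}{2}$ and an inscribed homothet of $\tfrac{s(K)+1}{2}(K\cap(-K))$, so the ratio it yields is controlled by how far $\tfrac{K-K}{2}$ can be pushed away from $K\cap(-K)$ in a planar body of prescribed Minkowski asymmetry --- which is exactly the planar region for $\tau(K)$ established earlier in the paper. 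Substituting that region into the estimate and optimizing over the contact directions produces $\tfrac{D(K,C)}{w(K,C)}\le\tfrac{s(K)+1}{2}c(s(K))$, the three pieces of $c$ corresponding to which inclusions and contacts are active (the breakpoints $\varphi$ and $\hat s$ marking the transitions); and $\max_{s\in[1,2]}\tfrac{s+1}{2}c(s)=\tfrac{\varphi+1}{2}$, attained only at $s=\varphi$. Tracing equality back forces $s(K)=\varphi$ and $\tau(K)$ extremal for that value, which pins the whole contact configuration down to the one realized, up to a linear map, by $\GH$.

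\textbf{Realization.} Fix $(\rho,s)$ with $1\le s\le 2$ and $1\le\rho\le\tfrac{s+1}{2}c(s)$. First, by the sharp case of the inequality there is a body $K_s$ (normalized with incenter at $0$) with $s(K_s)=s$ and a gauge $C_s\in\K^2_0$ such that $K_s$ is pseudo-complete with respect to $C_s$ and $\tfrac{D(K_s,C_s)}{w(K_s,C_s)}=\tfrac{s+1}{2}c(s)$; concretely $K_s$ can be taken as an explicit one-parameter family of (at most) pentagons --- a ``house'' over a rectangle with $s$-dependent proportions, degenerating to a triangle as $s\to 2$, with $K_\varphi=\GH$ --- and $C_s$ as the corresponding extremal member of $\mathcal S(K_s)$. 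Second, the gauge $B_0:=\tfrac{K_s-K_s}{2}\in\mathcal S(K_s)$ makes $K_s$ of constant width, hence complete, hence pseudo-complete by \cite{MoSch}, with $\tfrac{D(K_s,B_0)}{w(K_s,B_0)}=1$. Third, $\mathcal S(K_s)$ is convex for Minkowski addition --- if $B_0,B_1\in\mathcal S(K_s)$ then $(1-t)B_0+tB_1$ still contains $(1-t)\tfrac{K_s-K_s}{2}+t\tfrac{K_s-K_s}{2}=\tfrac{K_s-K_s}{2}$ and, by convexity of $\tfrac{s+1}{2}(K_s\cap(-K_s))$, is contained in it --- so, writing $B_1$ for the rescaled copy of $C_s$ inside $\mathcal S(K_s)$ and $B_t:=(1-t)B_0+tB_1$, the body $K_s$ is pseudo-complete with respect to each $B_t$, while $t\mapsto R(K_s-K_s,B_t)/r(K_s-K_s,B_t)$ is continuous with value $1$ at $t=0$ and $\tfrac{s+1}{2}c(s)$ at $t=1$. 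By the intermediate value theorem it equals $\rho$ at some $t^\ast$; finally, translating $K_s$ to a Minkowski centered position (which changes neither $s(K_s)$, nor pseudo-completeness, nor the ratio) yields the required pair.

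\textbf{Where the difficulty lies.} The crux is the first part of the realization together with the sharp planar estimate behind $c(s)$: determining the extremal bodies in the three $s$-regimes, verifying both $s(K_s)=s$ and the exact value of the diameter--width ratio, and extracting the uniqueness that singles out the golden house in the equality case. Given those equality cases and the Minkowski description of $\mathcal S(K)$, the continuity step is routine; the one piece of bookkeeping to be careful about is that the incenter of $K_s$ stays at $0$ along the path $B_t$ (one checks this on the explicit $K_s$, where it remains on the axis of symmetry), so that the normalization underlying $\mathcal S(K_s)$ --- and hence the $c=0$ form of the \cite{BrG2} sandwich --- is maintained throughout.
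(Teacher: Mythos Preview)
Your architecture matches the paper's: the shell $\mathcal S(K)$ is exactly the sandwich from Proposition~\ref{prop:pseudo-complete-charact}, the Minkowski interpolation $B_t=(1-t)B_0+tB_1$ is precisely the paper's $C_\lambda$, and the reduction to Theorem~\ref{thm:PlanarCase} is the intended mechanism. The genuine gap is that you never actually derive the inequality. You correctly obtain $r(K-K,B)\ge\tfrac{2}{s+1}\,r\bigl(K-K,K\cap(-K)\bigr)$ and then throw away information by bounding the second factor below by $2$. The sharp bound is not obtained by ``exploiting both inclusions at once'' or ``optimizing over contact directions''; it is the single identity
\[
r\bigl(K-K,K\cap(-K)\bigr)=2\,r\Bigl(\tfrac{K-K}{2},K\cap(-K)\Bigr)=\tfrac{2}{\tau(K)},
\]
which, plugged into the chain you already have, gives $\tfrac{D(K,C)}{w(K,C)}=\tfrac{2}{r(K-K,B)}\le\tfrac{s+1}{2}\,\tau(K)\le\tfrac{s+1}{2}\,c(s(K))$ directly from Theorem~\ref{thm:PlanarCase}. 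Your prose suggests you believe an additional optimization is needed; there is none, and as written the crucial step is missing.

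The equality case and the realization are similarly gestured at where the paper is explicit. For equality one checks by differentiating each branch that $\tfrac{s+1}{2}c(s)$ is maximized only at $s=\varphi$, whence equality in the chain forces $\tau(K)=c(\varphi)=1$, and Proposition~\ref{prop:GoldenHouse} then identifies $K$ with a linear image of $\GH$; your ``tracing equality back \dots\ pins the whole contact configuration down'' omits exactly this citation. For the realization you posit an extremal pair $(K_s,C_s)$ and describe $K_s$ only as ``an explicit one-parameter family of pentagons''; the paper instead takes any Minkowski centered $K_s$ with $\tau(K_s)=c(s)$ supplied by Theorem~\ref{thm:PlanarCase}, sets $C_\lambda=(1-\lambda)\tfrac{K_s-K_s}{2}+\lambda\,\tfrac{s+1}{2}\bigl(K_s\cap(-K_s)\bigr)$, and computes $\tfrac{D(K_s,C_1)}{w(K_s,C_1)}=\tfrac{s+1}{2}\,\tau(K_s)$ directly, so no family has to be constructed by hand and no separate verification of the extremal gauge is needed.
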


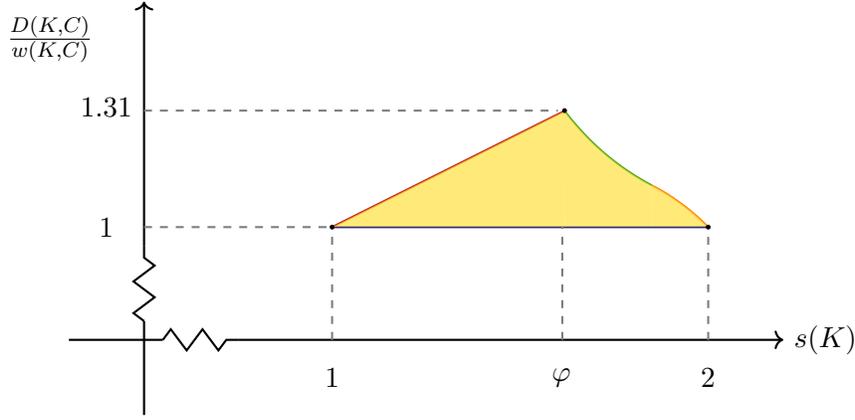
\begin{figure}[ht]
  \begin{tikzpicture}[scale=5]
    \draw[thick, discont] (0.05,0) -- (0.25,0);
    \draw[thick, discont] (0,0.05) -- (0,0.25);
    \draw [thick] (-0.2,0) -- (0.05,0);
    \draw [thick] (0,-0.2) -- (0,0.05);
    \draw[->] [thick] (0.25, 0) -- (1.7, 0) node[right] {$s(K)$};
    \draw[->] [thick] (0, 0.25) -- (0, 0.9);

    \draw [name path=F1,thick, dblue, shift={(-0.5,-0.7)}] (1,1) -- (1.6180339,1);
    \draw [name path=F2,thick, dblue, shift={(-0.5,-0.7)}] (1.6180339,1) -- (1.85,1);
    \draw [name path=F3,thick, dblue, shift={(-0.5,-0.7)}] (1.85,1) -- (2,1);
    
    \draw[name path=F4, thick, dred, domain=1:1.619, smooth, variable=\x, dred, ,shift={(-0.5,-0.7)}]  plot ({\x}, {(\x+1)/2});

    \draw [thick, dashed,gray] (0,0.3) -- (0.5,0.3);
    \draw [thick, dashed,gray] (1.5,0) -- (1.5,0.3);
    \draw [thick, dashed,gray] (0.5,0) -- (0.5,0.3);
    \draw [thick, dashed,gray] (0,0.61) -- (1.1,0.61);
    \draw [thick, dashed,gray] (1.112,0) -- (1.112,0.3);
    
  \draw[name path=F5, domain=1.6180339:1.85, smooth, variable=\x,shift={(-0.5,-0.7)},thick,dgreen] plot ({\x}, {((((\x)^2+1)^2))/(((\x)^2-1)*((\x)^2+2*\x-1+2*sqrt(\x*((\x)^2-1)))) * (\x+1)/2});
   \draw[name path=F6, domain=1.85: 2, smooth, variable=\x,shift={(-0.5,-0.7)},thick, orange] plot ({\x}, {((\x)^2-2*(\x)-1)/(\x-3)});
   \tikzfillbetween[of=F1 and F4,on layer=main]{lgold, opacity=0.7};
    \tikzfillbetween[of=F2 and F5,on layer=main]{lgold, opacity=0.7};
    \tikzfillbetween[of=F3 and F6,on layer=main]{lgold, opacity=0.7};
    \draw (-0.25,0.8) node {$\frac{D(K,C)}{w(K,C)}$};
    \draw (-0.1,0.3) node {$1$};
    \draw (-0.1,0.62) node {$1.31$};
    \draw (0.5,-0.1) node {$1$};
    \draw (1.5,-0.1) node {$2$};
    \draw (1.11,-0.1) node {$\varphi$};
    
    \draw [fill,shift={(-0.5,-0.7)}] (1,1) circle [radius=0.005];
    \draw [fill,shift={(-0.5,-0.7)}] (2,1) circle [radius=0.005];
    \draw [fill,shift={(-0.5,-0.7)}] (1.6180339,1.31) circle [radius=0.005];
     \end{tikzpicture}
   \caption{Region of all possible values for the diameter-width ratio for pseudo complete sets $K$ in dependence of their Minkowski asymmetry $s(K)$: $\frac{D(K,C)}{w(K,C)} \geq 1$ (blue); $\frac{D(K,C)}{w(K,C)} \leq \frac{s(K)+1}{2} c(s(K))$ (red, green and orange, resp.).
   }\label{fig:Dwratio}
\end{figure}

For $K \in \K^n_0$ it is reasonable to inquire $K \cap (-K) = K = \conv(K \cup (-K))$, a condition satisfied only when $0$ acts as the center of symmetry for $K$. 
For this reason in the following we fix the center of the convex body $K$ to be the Minkowski center. 
Furthermore, if $K$ is not 0-symmetric, $K \cap (-K)$ can become arbitrarily small. 

The proof of \Cref{thm:results_pscomp} is heavily based on the containment inequalities of $K \cap (-K)$ and $\frac{K-K}{2}$ for a Minkowski centered convex compact set $K$. 

We define $\tau(K)>0$ such that 
\[
K  \cap (-K) \subset^{\opt} \tau(K) \, \left( \frac{K-K}{2} \right). 
\]

We describe the possible $\tau$ values of $K$ in dependence of its asymmetry (c.f.~\Cref{fig:alpha-region}). This result will be the biggest milestone for showing \Cref{thm:results_pscomp}. 
\begin{thm}\label{thm:PlanarCase}
Let $K \in\K^2$ be Minkowski centered. Then 
\[
\frac{2}{s(K)+1} \leq \tau(K) \leq c(s(K)). 
\] 
 
Moreover, for every pair
$(\tau,s)$, such that $1 \leq s \leq 2$ and $\frac{2}{s+1} \leq \tau \leq c(s)$, there exists a Minkowski centered $K_{s,\tau} \in\K^2$, such that $s(K_{s,\tau})=s$ and $\tau(K_{s,\tau})=\tau$.
\end{thm}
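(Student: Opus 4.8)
The plan is to treat the two inequalities and the realizability statement separately, using throughout that both $s(\cdot)$ and $\tau(\cdot)$ are invariant under invertible linear maps (which also preserve being Minkowski centered) and that, since $K\cap(-K)$ and $\frac{K-K}2$ are $0$-symmetric, $\tau(K)=R\!\left(K\cap(-K),\frac{K-K}2\right)$.

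\emph{Lower bound.} Because $K$ is Minkowski centered, $K\subseteq s(K)(-K)$, equivalently $-K\subseteq s(K)K$. Taking Minkowski combinations, $\frac{K-K}2=\tfrac12 K+\tfrac12(-K)\subseteq\frac{s(K)+1}2K$, and symmetrically $\frac{K-K}2\subseteq\frac{s(K)+1}2(-K)$; intersecting gives
\[
\frac{2}{s(K)+1}\cdot\frac{K-K}2\ \subseteq\ K\cap(-K).
\]
Since $\lambda B\subseteq A$ forces $R(A,B)\ge\lambda$ (compare volumes), this yields $\tau(K)\ge\frac{2}{s(K)+1}$. I note this step uses no planarity.

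\emph{Upper bound.} From $x\in K\cap(-K)\Rightarrow x=\tfrac12 x+\tfrac12 x\in\tfrac12 K+\tfrac12(-K)$ one gets $K\cap(-K)\subseteq\frac{K-K}2$, so $\tau(K)\le 1=c(s)$ whenever $s(K)\le\varphi$. The substance is the range $\varphi<s(K)\le 2$, where I would fix a direction $u$ with $\tau(K)=\dfrac{h_{K\cap(-K)}(u)}{\tfrac12\bigl(h_K(u)+h_K(-u)\bigr)}$ and examine the face of $K\cap(-K)$ with outer normal $u$: its extreme point is a vertex of $K$, a vertex of $-K$, or a transversal crossing of $\partial K$ with $\partial(-K)$, and in each case $h_{K\cap(-K)}(u)$ is governed by at most two supporting lines of $K$. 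A compactness argument plus standard perturbations should reduce the extremal $K$ to a polygon with few vertices for which, by extremality of the asymmetry, at least two of the constraints $h_K(u_i)\le s\,h_K(-u_i)$ are tight; "$\max\tau(K)$ subject to $s(K)=s$" then becomes a low-dimensional optimization. Carrying this out should produce two regimes — a one-parameter smooth family on $(\varphi,\hat s\,]$ whose first-order condition contributes the $\sqrt{s(s^2-1)}$ term, and a polygonal family on $(\hat s,2]$ contributing the rational expression — matching at $\hat s$, with the golden house $\GH$ realizing the global maximum $\tau=1$ at $s=\varphi$. Enumerating the admissible vertex/intersection patterns and checking that the claimed extremizers dominate every competitor in each regime is the main obstacle.

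\emph{Realizability.} For fixed $s\in[1,2]$ I would exhibit one Minkowski-centered body $K^{\flat}$ with $s(K^{\flat})=s$ and $\tau(K^{\flat})=\frac{2}{s+1}$ — a body, e.g.\ a suitable symmetric trapezoid, for which the inclusion $\frac{K-K}2\subseteq\frac{s+1}2(K\cap(-K))$ above is an equality of homothets — together with the extremizer $K^{\sharp}$ from the upper-bound analysis, for which $\tau(K^{\sharp})=c(s)$. Joining $K^{\flat}$ to $K^{\sharp}$ by a continuous path of Minkowski-centered bodies of asymmetry exactly $s$ (moving one shape parameter while keeping $0$ a Minkowski center), continuity of $K\mapsto\tau(K)$ in the Hausdorff metric and the intermediate value theorem then deliver a body $K_{s,\tau}$ for every $\tau\in[\frac{2}{s+1},c(s)]$.
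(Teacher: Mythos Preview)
Your lower bound is correct and matches what the paper cites from \cite{BDG1}, and the observation $\tau(K)\le 1$ for $s(K)\le\varphi$ is likewise fine.

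The upper bound for $s(K)>\varphi$, however, is only a plan: you write that compactness and ``standard perturbations'' \emph{should} reduce to a polygon and that the optimization \emph{should} produce the two regimes, and you yourself flag the case enumeration as ``the main obstacle.'' None of this is carried out, and the paper's argument shows the reduction is delicate enough that a generic perturbation scheme is unlikely to close it. The paper proceeds by a specific monotone replacement: first $K$ is replaced by $K'=\conv\bigl((K\cap(-K))\cup\{-s p^1,-s p^2,-s p^3\}\bigr)$ for a well-spread triple of asymmetry points, which leaves $K\cap(-K)$ unchanged but can only shrink $\tfrac{K-K}2$, so $\tau(K')\ge\tau(K)$ while $s(K')=s(K)$. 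Crucially, for $s>\varphi$ one knows that $\bd(K)\cap\bd(-K)$ consists of exactly six points with asymmetry points of $K$ and of $-K$ alternating between consecutive cones; this structure is used to locate the direction realizing $\tau(K')$ at one of those six crossings. Four further explicit replacements (sliding the peaks $-sp^i$ along prescribed lines, one at a time, each time checking directly that $s$ is preserved and $\tau$ does not decrease) reduce to a heptagon $\bar K$ depending on a single free parameter $\nu\in(0,1]$, after which the problem is a one-variable calculus computation whose two regimes give the two branches of $c(s)$. The monotone replacement $K\mapsto K'$ and the six-crossing structure are the missing structural ingredients in your sketch; without them there is no clear passage from ``the extreme point of the relevant face is of one of three types'' to a finite-dimensional optimization.

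For realizability your outline is close in spirit to the paper, but ``joining $K^\flat$ to $K^\sharp$ by a continuous path of Minkowski-centered bodies of asymmetry exactly $s$'' is not free: holding $s(K)$ fixed along a deformation requires an explicit construction. The paper builds such paths concretely---starting from $K_s=S\cap(-sS)$ for a regular triangle $S$ and rotating two supporting lines about specified pivot points, then switching to the one-parameter heptagon family $\bar K(\tau,\nu)$ from the upper-bound analysis---verifying at each stage that a fixed well-spread triple remains a triple of asymmetry points so that $s$ stays pinned.
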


\section{Definitions and Propositions}
For any $X,Y \subset\R^n$, $\rho \in \R$ let $X+Y =\{x+y:x\in X,y\in Y\}$ be the \cemph{Minkowski sum} of $X$, $Y$ and  $\rho X= \{ \rho x: x \in X\}$ the \cemph{$\rho$-dilatation} of $X$, and abbreviate $(-1)X$ by $-X$. 
For any $X\subset\R^n$ let $\conv(X), \pos(X)$, $\lin(X)$, and $\aff(X)$ denote the \cemph{convex hull}, the \cemph{positive hull}, the \cemph{linear hull}, and the \cemph{affine hull} of $X$, respectively. 
A \cemph{segment} is the convex hull of $\{x,y\} \subset \R^n$, which we abbreviate by $[x,y]$. With $u^1, \dots, u^n$ we denote the \cemph{standard unit vectors} of $\R^n$. For every $X \subset \R^n$ let $\bd(X)$ and $\inter(X)$ denote the \cemph{boundary} and \cemph{interior} of $X$, respectively. Let us denote the \cemph{Euclidean norm} of $x\in\R^n$ by $\|x\|$, the \cemph{Euclidean unit ball} by $\B_2=\{x \in\R^n : \|x\|\leq 1\}$. In generalized Minkowski spaces the norm is replaced by a \cemph{gauge function},
i.e.~a function of the type $\norm{\cdot}_C \colon \R^n \to [0,\infty), \norm{x}_C = \inf \{ \rho \geq 0 : x \in \rho C \}$ with a (not necessarily symmetric) gauge body $C \in  \K^n$ satisfying $0 \in \text{int}(C)$. 
In case $u^1,\dots,u^{n+1}\in\R^n$ are affinely independent, we say that $\conv(\{u^1,\dots,u^{n+1}\})$ is an \cemph{$n$-simplex}. 
For any $a\in\R^n \setminus \{0\}$
and $\rho\in\R$, let $H^{\le}_{a,\rho} = \{x\in\R^n: a^Tx \leq \rho\}$ denote a \cemph{halfspace} with its boundary  being the \cemph{hyperplane} $H_{\left(a,\rho\right)}^==\{x\in\R^n: a^Tx = \rho\}$. 
Analogously, we define $H_{\left(a,\rho\right)}^\ge, H_{\left(a,\rho\right)}^<,H_{\left(a,\rho\right)}^>$. 
We say that the halfspace $H^{\le}_{\left(a,\rho\right)}$ \cemph{supports} $K \in\K^n$ in $q \in K$, if $K \subset H^{\le}_{\left(a,\rho\right)}$ and $q \in H_{\left(a,\rho\right)}^=$. We denote the set of all \cemph{extreme points} of $K$ by $\ext(K)$. 
For any $K, C\in\K^n$ we say $K$ is \cemph{optimally contained} in $C$, and denote it by $K\subset^{\opt}C$, if $K\subset C$ and $K\not \subset_t \rho C$ for any $\rho \in [0,1)$.

We start off with a very basic but useful fact.
\begin{lemma}\label{lem:sixareas}

Let the plane $\R^2$ be split into six areas by three pairwise different lines passing through the origin and each of the points $p_1,p_2,p_3$ be contained in a different of those areas, any two which are non-consecutive. Then $0\in\conv\left(\set{p^1,p^2,p^3}\right)$.
\end{lemma}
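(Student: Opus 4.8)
The plan is to reduce to a simple normal form and then invoke a separation fact. Recall the elementary statement that for finitely many points $q^1,\dots,q^m\in\R^n$ one has $0\in\conv(\{q^1,\dots,q^m\})$ if and only if there is no $a\in\R^n\setminus\{0\}$ with $\langle a,q^i\rangle>0$ for every $i$: if $0\notin\conv(\{q^1,\dots,q^m\})$, then, this set being compact and convex, $0$ may be strictly separated from it, which produces such an $a$; conversely, the open halfspace $\{x:\langle a,x\rangle>0\}$ is convex and misses $0$, hence so does the convex hull of any points lying inside it. So it suffices to show that no open halfplane bounded by a line through the origin contains all three of $p^1,p^2,p^3$.

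First I would normalise the configuration. The six areas form a cycle in which two areas are consecutive precisely when they share one of the three lines, so a triple of pairwise non-consecutive areas is an independent set of size three in the cycle $C_6$ and therefore consists of three alternating sectors. Hence, after cyclically relabelling the sectors, I may assume the three lines pass through unit vectors at arguments $0\le\alpha_1<\alpha_2<\alpha_3<\pi$ and that (with $S_1$ the open sector of arguments in $(\alpha_1,\alpha_2)$, $S_2$ that of $(\alpha_2,\alpha_3)$, $S_3$ of $(\alpha_3,\alpha_1+\pi)$, $S_4$ of $(\alpha_1+\pi,\alpha_2+\pi)$, $S_5$ of $(\alpha_2+\pi,\alpha_3+\pi)$, $S_6$ of $(\alpha_3+\pi,\alpha_1+2\pi)$) one has $p^1\in\overline{S_1}$, $p^2\in\overline{S_3}$, $p^3\in\overline{S_5}$; we may also assume each $p^i\neq 0$, as otherwise the claim is trivial. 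Let $\theta_1,\theta_2,\theta_3$ denote the arguments of $p^1,p^2,p^3$.

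Next I would bound the three cyclic gaps between $\theta_1,\theta_2,\theta_3$. From $\theta_1\in[\alpha_1,\alpha_2]$, $\theta_2\in[\alpha_3,\alpha_1+\pi]$, $\theta_3\in[\alpha_2+\pi,\alpha_3+\pi]$ together with $\alpha_1<\alpha_2<\alpha_3<\alpha_1+\pi$, one reads off both the cyclic order $\theta_1<\theta_2<\theta_3<\theta_1+2\pi$ and the three estimates
\[
\theta_2-\theta_1\le\pi,\qquad \theta_3-\theta_2\le\pi,\qquad (\theta_1+2\pi)-\theta_3\le\pi,
\]
each being immediate from the displayed membership ranges (for instance $\theta_2-\theta_1\le(\alpha_1+\pi)-\alpha_1=\pi$). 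If one of these is an equality, say $\theta_2-\theta_1=\pi$, then $p^1$ and $p^2$ lie on a common line through $0$ on opposite sides of it, so $0\in[p^1,p^2]\subseteq\conv(\{p^1,p^2,p^3\})$ and we are done. Otherwise all three gaps are strictly smaller than $\pi$; since they sum to $2\pi$, no open arc of length $\pi$ can contain all of $\theta_1,\theta_2,\theta_3$ (such an arc would force the complementary gap to be at least $\pi$), i.e.\ no open halfplane through the origin contains $p^1,p^2,p^3$, and the separation fact yields $0\in\conv(\{p^1,p^2,p^3\})$.

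The only step requiring genuine care is the normal-form reduction: checking that ``pairwise non-consecutive'' truly forces the alternating pattern among the six sectors, that the cyclic relabelling is legitimate, and the mild bookkeeping for a point lying exactly on one of the three lines (handled above by the equality case). Everything else is routine.
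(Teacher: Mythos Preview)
Your proof is correct and follows essentially the same strategy as the paper: assume $0\notin\conv(\{p^1,p^2,p^3\})$, separate to obtain a halfplane through the origin containing all three points, then derive a contradiction from the fact that such a halfplane cannot meet all three alternating sectors. The paper compresses the last step into a single line---a halfspace through the origin intersects at most four (necessarily consecutive) of the six sectors, hence misses at least one of the three alternating ones---whereas you parametrise by angles and bound the three cyclic gaps $\theta_{i+1}-\theta_i$ explicitly. Your route is longer but more self-contained, makes the ``consecutive'' part of the sector count explicit, and handles the boundary case of a point lying on one of the three lines, which the paper leaves implicit.
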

\begin{proof}
    Assume that $0\notin\conv\left(\set{p^1,p^2,p^3}\right)$. We can separate $\set{0}$ and $\conv\left(\set{p^1,p^2,p^3}\right)$, thus there exists $a\in\R^2$ such that $a^Tp^i<0$ for $i\in\set{1,2,3}$. But since $H^{\leq}_{(a,0)}$ can only intersect at most four of the six areas, this contradicts the fact that $p^i$, $i\in\set{1,2,3}$, are contained in non-consecutive areas.
\end{proof}

We recall the characterization of the optimal containment under homothety in terms of the touching conditions \cite[Theorem 2.3]{BrKo}. 
\begin{proposition}\label{prop:Opt_Containment}
Let $K,C\in\mathcal K^n$ and $K\subset C$. Then the following are equivalent:
\begin{enumerate}[(i)]
\item $K\subset^{\opt}C$
\item There exist $k\in\{2,\dots,n+1\}$, $p^j\in K\cap \bd(C)$, $j=1,\dots,k$, and $a^j$ outer normals of supporting
halfspaces of $K$ and $C$ at $p^j$,
such that $0\in\conv(\{a^1,\dots,a^k\})$.
\end{enumerate}
\end{proposition}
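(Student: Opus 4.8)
The plan is to read the equivalence (i)$\Leftrightarrow$(ii) as the first-order optimality condition for the convex program that computes $R(K,C)$, and then to put it in the stated touching form. First I would record two reductions. Since $K\subset C$ forces $R(K,C)\le 1$, the relation $K\subset^{\opt}C$ says exactly that $0$ minimises $t\mapsto\max_{x\in K}\norm{x-t}_C$ with minimal value $1$; and since translating $K$ and $C$ by a common vector changes neither $K\subset C$, nor $K\subset_t\rho C$, nor (up to translating it) the contact set $K\cap\bd(C)$, nor any outer normals, I may assume $0\in\inter(C)$. I would also note the harmless simplification of (ii): if $p\in K\cap\bd(C)$ and $a$ is an outer normal of a supporting halfspace of $C$ at $p$, then $a^Tx\le a^Tp$ for all $x\in C\supseteq K$, so $a$ is automatically an outer normal of a supporting halfspace of $K$ at $p$; hence (ii) only requires contact points $p^j\in K\cap\bd(C)$ together with nonzero outer normals $a^j$ of $C$ at $p^j$ and $0\in\conv(\{a^1,\dots,a^k\})$.

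For (ii)$\Rightarrow$(i) I would argue directly. Given $p^j,a^j$ and weights $\lambda_j\ge 0$ with $\sum_j\lambda_j=1$ and $\sum_j\lambda_j a^j=0$, put $\beta_j:=(a^j)^Tp^j=\max_{c\in C}(a^j)^Tc$, which is positive because $0\in\inter(C)$. If $K\subset\rho C+t$ with $\rho\in[0,1)$, then $p^j\in K$ gives $(a^j)^T(p^j-t)\le\rho\beta_j$, i.e.\ $(1-\rho)\beta_j\le(a^j)^Tt$; forming the $\lambda_j$-combination yields $(1-\rho)\sum_j\lambda_j\beta_j\le\big(\sum_j\lambda_j a^j\big)^Tt=0$, which is impossible since $\rho<1$ and $\sum_j\lambda_j\beta_j>0$. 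Thus no admissible $\rho<1$ exists, i.e.\ $K\subset^{\opt}C$.

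For (i)$\Rightarrow$(ii) I would use the contrapositive, so assume (ii) fails. Let $\mathcal A$ be the set of unit outer normals of supporting halfspaces of $C$ at the points of $K\cap\bd(C)$; since $K\cap\bd(C)$ is compact and the normal-cone map of $C$ has closed graph, $\mathcal A$ is compact. Then $0\notin\conv(\mathcal A)$: otherwise Carath\'eodory's theorem would produce at most $n+1$ (and at least $2$, since the normals are nonzero) elements of $\mathcal A$ whose convex hull contains $0$, and these — being outer normals at points of $K\cap\bd(C)$, hence automatically outer normals of $K$ there — would witness (ii). (If $K\cap\bd(C)=\emptyset$ this is immediate, and then $R(K,C)<1$, so (i) fails too.) Separating $0$ from $\conv(\mathcal A)$ gives a direction $d$ with $d^Ta\le-\delta<0$ for all $a\in\mathcal A$. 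The crucial claim is that a short push along $d$ makes $K$ fit strictly inside $C$: by Danskin's theorem the right derivative at $0$ of $\varepsilon\mapsto\max_{x\in K}\norm{x+\varepsilon d}_C$ equals $\max\{d^Ta : x\in K\cap\bd(C),\ a\in\partial\norm{\cdot}_C(x)\}$, and the subgradients of the gauge at $x\in\bd(C)$ are exactly the outer normals of $C$ at $x$ rescaled so that $a^Tx=1$, hence positive multiples of elements of $\mathcal A$; therefore this derivative is negative, so $\max_{x\in K}\norm{x+\varepsilon d}_C<1$ for small $\varepsilon>0$. This means $K+\varepsilon d\subset\rho C$ for some $\rho<1$, i.e.\ $K\subset_t\rho C$, so $K$ is not optimally contained in $C$ and (i) fails.

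I expect the main obstacle to be the push step in (i)$\Rightarrow$(ii): turning the fact that $d$ meets every active normal at an obtuse angle into the fact that a short translation along $d$ creates room to shrink $C$. Danskin's formula together with the closed graph of the normal-cone map handles this cleanly, but to keep the argument elementary I could instead cover the compact set $K\cap\bd(C)$ by finitely many neighbourhoods on each of which every supporting halfspace of $C$ has normal $a$ with $d^Ta<0$ (so $x+\varepsilon d\in\inter(C)$ there for small $\varepsilon$), the part of $K$ bounded away from $\bd(C)$ being immediate. Equivalently, the whole equivalence is complementary slackness for the semi-infinite linear program $\min\{\rho>0 : a^T(x-t)\le\rho\,h_C(a)\ \text{for all }x\in K,\ \|a\|=1\}$ in the variables $(\rho,t)$, with $h_C$ the support function of $C$: the optimal dual multipliers are supported on finitely many active constraints $a^j$, stationarity in $t$ yields $\sum_j\lambda_j a^j=0$, and the $p^j$ are the matching contact points on $\bd(C)$. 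Since this proposition is quoted from \cite{BrKo}, in the write-up I would most likely retain only the short direct argument for (ii)$\Rightarrow$(i) and the separation-plus-push argument for (i)$\Rightarrow$(ii).
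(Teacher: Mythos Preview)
The paper does not supply its own proof of this proposition: it is quoted verbatim as \cite[Theorem 2.3]{BrKo} and used as a black box, so there is no in-paper argument to compare your proposal against.

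That said, your proposal is a correct and standard proof of this optimality characterisation. The direction (ii)$\Rightarrow$(i) via the weighted inequality $(1-\rho)\sum_j\lambda_j\beta_j\le 0$ is clean and complete. For (i)$\Rightarrow$(ii), your separation-plus-push argument is the expected one; the only place that needs a little care is the invocation of Danskin in the nonsmooth setting, but you handle this correctly by identifying $\partial\norm{\cdot}_C(x)$ for $x\in\bd(C)$ with the outer normals normalised to $a^Tx=1$, and your fallback elementary covering argument would also work. Your observation that any outer normal of $C$ at a contact point is automatically an outer normal of $K$ there (so the ``$K$ and'' in (ii) is redundant) is also correct. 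This is essentially the approach one would expect to find in \cite{BrKo}: reading optimal containment as the convex program $\min_{t,\rho}\{\rho : K-t\subset\rho C\}$ and writing down its first-order optimality conditions, with Carath\'eodory bounding the number of active constraints by $n+1$.
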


In the planar case \Cref{prop:Opt_Containment} 
implies the following corollary (c.f.~\cite[Corollary 8]{BDG}). 

\begin{cor}\label{cor:zero-inside}
Let $K \in \K^2$ with $s(K) > 1$. Then $K$ is Minkowski centered if and only if there exist $p^1,p^2,p^3\in\bd(K)\cap(-\frac{1}{s(K)} K)$ and $a^i$, $i=1,2,3$, outer normals of supporting halfspaces of $K$ in $p^i$,
such that $0 \in \inter(\conv(\{p^1,p^2,p^3\}))$ and $0\in\conv(\{a^1,a^2,a^3\})$.
\end{cor}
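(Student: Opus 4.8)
The plan is to prove Corollary~\ref{cor:zero-inside} by specializing Proposition~\ref{prop:Opt_Containment} to the containment $K-c \subset s(K)(c-K)$ that defines a Minkowski center, and then upgrading the resulting ``$0 \in \conv$'' conclusions to the strict ``$0 \in \inter(\conv)$'' and to a triple of contact points using the planar dimension $n=2$ together with the hypothesis $s(K)>1$. First I would record that, after translating so that $c=0$, being Minkowski centered means $K \subset^{\opt} s(K)(-K)$: indeed $K \subset s(K)(-K)$ by definition of a Minkowski center, and the optimality (no smaller dilate works) is exactly the infimum in the definition of $s(K)$ being attained at $s(K)$. Then Proposition~\ref{prop:Opt_Containment} with $C = s(K)(-K)$ gives $k \in \{2,3\}$, contact points $p^j \in K \cap \bd(s(K)(-K))$, and outer normals $a^j$ of supporting halfspaces of both $K$ and $s(K)(-K)$ at $p^j$ with $0 \in \conv(\{a^1,\dots,a^k\})$. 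Note $p^j \in \bd(s(K)(-K))$ is the same as $-\frac{1}{s(K)}p^j \in \bd(K)$, i.e.\ $p^j \in \bd(K) \cap (-s(K)K)$ — I'd double-check the normalization against the statement, which writes $\bd(K)\cap(-\tfrac1{s(K)}K)$, so one should phrase the contact condition as $p^j$ lying on $\bd(K)$ and simultaneously, up to the factor $s(K)$, on the boundary of the reflected dilate; this bookkeeping is cosmetic but must be stated consistently.

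Next I would argue that $k=3$ rather than $k=2$, and that the three normals positively span $\R^2$ (equivalently $0 \in \inter(\conv(\{a^1,a^2,a^3\}))$ after relabeling, or at least $0$ lies in the relative interior of a segment if they were only two). If $k=2$ then $a^1 = -\lambda a^2$ for some $\lambda>0$, so $K$ lies in a slab between two parallel supporting hyperplanes at $p^1$ and $p^2$, and the same two hyperplanes support $s(K)(-K)$; a short argument shows this forces $K$ to be symmetric with respect to the midpoint of that slab in the relevant direction in a way incompatible with $s(K)>1$ — more carefully, two antipodal contact points with antipodal normals already certify $K \subset^{\opt} 1\cdot(-K)$ up to translation, giving $s(K)=1$, a contradiction. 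Hence $k=3$, and affine independence of $a^1,a^2,a^3$ (they cannot be pairwise parallel, again by the $k=2$ argument applied to any sub-pair) yields $0 \in \inter(\conv(\{a^1,a^2,a^3\}))$, hence also $0 \in \conv(\{a^1,a^2,a^3\})$ as required.

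For the remaining claim $0 \in \inter(\conv(\{p^1,p^2,p^3\}))$ I would invoke Lemma~\ref{lem:sixareas}: the three normal lines $\lin(\{a^i\})$ (pairwise distinct) split $\R^2$ into six sectors, and the supporting-hyperplane conditions $(a^i)^T x \le (a^i)^T p^i$ for all $x \in K$ place each contact point $p^i$ in a prescribed sector relative to the others — because $p^i \in -s(K)K$ forces $(a^j)^T p^i$ to be strictly on the ``near'' side for $j \ne i$ when $s(K)>1$, so consecutive sectors are excluded. Lemma~\ref{lem:sixareas} then gives $0 \in \conv(\{p^1,p^2,p^3\})$, and the strictness (interior) comes from the same strict inequalities, ruling out $0$ on an edge. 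Conversely, if such $p^i$ and $a^i$ exist with $0 \in \inter(\conv(\{p^i\}))$ and $0 \in \conv(\{a^i\})$, then Proposition~\ref{prop:Opt_Containment} (read in the other direction, with $C = s(K)(-K)$ and the $a^i$ serving as the common outer normals) certifies $K \subset^{\opt} s(K)(-K)$, i.e.\ $0$ is a Minkowski center — this is the easy direction. The main obstacle I anticipate is the direction $k=2 \Rightarrow s(K)=1$ and, relatedly, pinning down exactly which ``near side'' inequalities are strict versus weak when a $p^i$ happens to be an extreme point lying in several supporting hyperplanes; getting the sector placement airtight so that Lemma~\ref{lem:sixareas} applies (non-consecutive sectors) is where the real care is needed, and it is precisely there that the hypothesis $s(K)>1$ is used.
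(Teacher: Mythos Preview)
Your overall plan matches the paper's: apply Proposition~\ref{prop:Opt_Containment} to the optimal containment $-\frac{1}{s(K)}K \subset^{\opt} K$ (equivalently $K \subset^{\opt} -s(K)K$), rule out $k=2$ because that would force $s(K)=1$, then invoke Lemma~\ref{lem:sixareas} to obtain $0\in\conv(\{p^1,p^2,p^3\})$, and finally upgrade to the interior using $s(K)>1$. The converse direction you sketch is also correct and the paper leaves it implicit.

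The substantive difference is in \emph{which three lines} you feed into Lemma~\ref{lem:sixareas}. You propose the normal lines $\lin(\{a^i\})$ (or, implicitly, the lines $\{x:(a^i)^Tx=0\}$) and then try to place each $p^i$ in a prescribed sector via sign conditions like ``$(a^j)^Tp^i$ is on the near side for $j\ne i$''. This is precisely where your plan has a gap: nothing in the touching conditions forces $(a^j)^Tp^i<0$ for $j\ne i$. If two of the normals, say $a^1$ and $a^2$, are close (and $a^3\approx -a^1$, which is compatible with $0\in\conv(\{a^i\})$), then $p^1$ and $p^2$ sit on nearby faces and one easily gets $(a^1)^Tp^2>0$; moreover the three lines you chose nearly coincide, so the six-sector picture degenerates. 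Your own closing sentence flags this as the hard step, and indeed it does not go through with this choice of lines.

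The paper avoids this by a different geometric construction: it forms the triangle $S$ bounded by the three supporting lines of $K$ at $p^1,p^2,p^3$, takes the symmetric hexagon $S\cap(-S)$ (which contains $\conv(\{p^1,p^2,p^3\})$ because each $p^i\in K\subset S$ and $-sp^i\in K\subset S$ gives $p^i\in -\tfrac1s S\subset -S$), and uses the three lines through \emph{opposite vertices of this hexagon}. Each $p^i$ lies on the edge of the hexagon coming from the $i$-th face of $S$, and those edges alternate with the edges coming from $-S$; hence the $p^i$ automatically land in non-consecutive sectors, and Lemma~\ref{lem:sixareas} applies. The upgrade to $0\in\inter(\conv(\{p^1,p^2,p^3\}))$ is then argued, as you suggest, by noting that $0$ on the boundary would force $s(K)=1$. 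So your plan is on the right track, but you should replace the normal-line sectors by the hexagon-diagonal sectors to make the Lemma~\ref{lem:sixareas} step actually work.
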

\begin{proof}
    There exist two or three such touching points in $\bd(C)\cap (-\frac{1}{s(C)}\bd(C))$ as described in \Cref{prop:Opt_Containment}. If it were only two it would follow that $s(C)=1$. 
    Now let $S$ be the intersection of the three supporting hyperplanes at the touching points $\set{p^1,p^2,p^3}$. In addition, parallel hyperplanes support $-\frac{1}{s(C)}C$ at the opposing points $-\tfrac{1}{s(C)}p^i$. Thus, $\conv(\set{p^1,p^2,p^3})$ is contained in the symmetric hexagon $S\cap (-S)$. The lines through opposing vertices of this hexagon form a situation as in \Cref{lem:sixareas} and therefore $0 \in \conv(\{p^1,p^2,p^3\})$. Since $0\in \bd (\conv(\{p^1,p^2,p^3\})) $ would imply $s(C)=1$, we obtain $0 \in \inter(\conv(\{p^1,p^2,p^3\}))$. 
\end{proof}

Let $\alpha(K):=R\left(K  \cap (-K), \conv(K \cup (-K))\right)$ and recall \cite[Lemma 2.2, Corollary 2.3]{BDG1}. 
\begin{proposition}\label{prop:old_results}
Let $K \in\K^n$ be Minkowski centered and let $L$ be a regular linear transformation. Then $\alpha(K)=\alpha(L(K))$ and $\tau(K)=\tau(L(K))$. 
\end{proposition}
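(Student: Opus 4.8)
The plan is to reduce both equalities to two elementary observations about a regular linear map $L$: first, that being a linear bijection of $\R^n$ it commutes with all the set operations appearing in the definitions of $\alpha$ and $\tau$ — intersection, convex hull, reflection in the origin, Minkowski difference, and dilation; and second, that the circumradius $R(\cdot,\cdot)$ is unchanged when $L$ is applied simultaneously to both of its arguments. Everything else is bookkeeping.

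First I would record the commutation identities. Since $L$ is a bijection, $L(X\cap Y)=L(X)\cap L(Y)$; since $L$ is linear, $L(-X)=-L(X)$, $L(\conv(X\cup Y))=\conv(L(X)\cup L(Y))$, $\frac{L(K)-L(K)}{2}=L\!\left(\frac{K-K}{2}\right)$, and $L(\rho C)=\rho L(C)$ for $\rho\ge 0$. Combining these gives $L(K)\cap(-L(K))=L\!\left(K\cap(-K)\right)$ and $\conv(L(K)\cup(-L(K)))=L\!\left(\conv(K\cup(-K))\right)$. Next I would prove the circumradius invariance: for all $X,C\in\K^n$ one has $R(L(X),L(C))=R(X,C)$, because $X\subset t+\rho C$ holds if and only if $L(X)\subset L(t)+\rho L(C)$ (apply $L$, resp.\ $L^{-1}$, and use linearity), and $t\mapsto L(t)$ runs over all of $\R^n$, so the two infima defining the circumradii coincide. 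The same equivalence, applied for each fixed $\rho$, shows that optimal containment is preserved: $X\subset^{\opt}C$ if and only if $L(X)\subset^{\opt}L(C)$. In particular, since $0$ is a Minkowski center of $K$ and $L(0)=0$, the body $L(K)$ is again Minkowski centered, so the quantities $\alpha$ and $\tau$ are defined for it.

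Finally I would assemble the pieces. For $\alpha$,
\[
\alpha(L(K))=R\!\left(L(K)\cap(-L(K)),\,\conv(L(K)\cup(-L(K)))\right)=R\!\left(L(K\cap(-K)),\,L(\conv(K\cup(-K)))\right)=\alpha(K).
\]
For $\tau$, note that $K\cap(-K)$ and $\frac{K-K}{2}$ are both $0$-symmetric, so the factor realizing $K\cap(-K)\subset^{\opt}\tau(K)\left(\frac{K-K}{2}\right)$ is unique and equals $R\!\left(K\cap(-K),\frac{K-K}{2}\right)$; then the same chain of equalities with $\frac{K-K}{2}$ in place of $\conv(K\cup(-K))$ yields $\tau(L(K))=\tau(K)$. (Alternatively, apply $L$ directly to the optimal containment and invoke preservation of $\subset^{\opt}$ together with uniqueness of the optimal factor.)

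I do not expect a genuine obstacle: this is a routine affine-invariance statement. The only points needing a word of care are that $L$ must be applied to \emph{both} bodies — the circumradius is not invariant under transforming only one argument — and that $\tau(K)$ is a well-defined single number, which is guaranteed by the $0$-symmetry of $K\cap(-K)$ and $\frac{K-K}{2}$.
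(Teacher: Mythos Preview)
Your argument is correct: the key observations that a regular linear bijection commutes with intersection, reflection, convex hull, Minkowski sum, and dilation, together with $R(L(X),L(C))=R(X,C)$, give both identities immediately, and your remark that $\tau(K)=R\!\left(K\cap(-K),\frac{K-K}{2}\right)$ by $0$-symmetry closes the $\tau$ part cleanly. Note, however, that the paper does not actually prove this proposition; it simply recalls it from \cite[Lemma~2.2, Corollary~2.3]{BDG1}, so there is no in-paper argument to compare against---your direct verification is the natural one and presumably coincides with what is done in that reference.
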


We recall another result from \cite{BDG} based on \Cref{prop:old_results} about the equality cases for the upper bound in the inequality $\alpha(K) \le 1$ in the planar case.

\begin{proposition}\label{prop:GoldenHouse}
Let $K\in\mathcal K^2$ be Minkowski centered such that $\alpha(K)=1$ or $\tau(K)=1$. Then $s(K)\leq \varphi$. Moreover, if $s(K)=\varphi$, there exists a linear transformation $L$ such that $L(K)=\mathbb{GH}=\conv\left(\left\{\begin{pmatrix} \pm 1 \\ 0\end{pmatrix}, \begin{pmatrix} \pm 1 \\ -1\end{pmatrix}, \begin{pmatrix} 0 \\ \varphi \end{pmatrix} \right\} \right) $ is the golden house.
\end{proposition}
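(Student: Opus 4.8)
The first move is to merge the two hypotheses into a single support-function condition. Writing $h_K(v)=\max_{x\in K}\iprod{v}{x}$, the inclusions $K\cap(-K)\subset\frac{K-K}{2}\subset\conv(K\cup(-K))$ yield, for every $v$,
\[
h_{K\cap(-K)}(v)\le\min\{h_K(v),h_K(-v)\}\le\frac{h_K(v)+h_K(-v)}{2}=h_{\frac{K-K}{2}}(v)\le\max\{h_K(v),h_K(-v)\}=h_{\conv(K\cup(-K))}(v).
\]
Since all four bodies are $0$-symmetric, $\tau(K)=\max_v h_{K\cap(-K)}(v)/h_{\frac{K-K}{2}}(v)$ and $\alpha(K)=\max_v h_{K\cap(-K)}(v)/h_{\conv(K\cup(-K))}(v)$ are both at most $1$. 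Hence $\alpha(K)=1$ or $\tau(K)=1$ forces the chain above to collapse in some direction $u$, which can only happen if $h_K(u)=h_K(-u)$ and $h_{K\cap(-K)}(u)=h_K(u)$ simultaneously; in particular $\alpha(K)=1\iff\tau(K)=1$, and this common condition is equivalent to the existence of an antipodal pair $\pm x^\ast\in K\cap(-K)$ lying on the two parallel supporting lines of $K$ with outer normals $\pm u$. It therefore suffices to bound $s(K)$ under this single condition.

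Next I would normalize using \Cref{prop:old_results}. Since $s,\alpha,\tau$ are invariant under regular linear maps, pick $L$ carrying $u$ to $u^1$ and rescaling the first coordinate so that $h_K(u^1)=h_K(-u^1)=1$, i.e.\ $K\subset\{x\in\R^2:|x_1|\le1\}$. A shear fixing the lines $x_1=\pm1$ then carries $x^\ast$ to $u^1$, so that after normalization $(\pm1,0)\in K\cap(-K)$, both points sitting on the supporting lines $x_1=\pm1$, while $0$ remains the Minkowski center. The claim becomes: every such normalized $K$ satisfies $s(K)\le\varphi$, with equality only for $\mathbb{GH}$.

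For the bound itself I would combine the two contact structures. If $s:=s(K)>1$ (the case $s=1\le\varphi$ being trivial), \Cref{cor:zero-inside} furnishes $p^1,p^2,p^3\in\bd(K)\cap\bd(-\tfrac1sK)$ with $0\in\inter(\conv\{p^1,p^2,p^3\})$ and outer normals $a^i$ with $0\in\conv\{a^1,a^2,a^3\}$; dualizing each contact gives $h_K(-a^i)=s\,h_K(a^i)$, while the normalization provides the complementary data $h_K(\pm u^1)=1$ realized at $(\pm1,0)$. Because $s$ is continuous and the normalized family is compact, a maximizer exists, and at a maximizer the only active conditions are these three asymmetry contacts together with the two wall contacts. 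A maximizer may thus be taken to be a polygon cut out by the supporting lines with normals $a^1,a^2,a^3,\pm u^1$ through the corresponding contact points; tracking the combinatorics with \Cref{lem:sixareas} and \Cref{prop:Opt_Containment} forces it into a house-shaped pentagon $\conv\{(0,M),(\pm1,0),(\pm1,-b)\}$. For such a body the inclusion $-\tfrac1sK\subset K$ reads off as $M\le s\,b$ and $b\le M(s-1)$, and the requirement that $0$ be the \emph{Minkowski} center (three contacts with $0$ interior to their hull, as in \Cref{cor:zero-inside}) forces both inequalities to be tight, whence $s(s-1)=1$, i.e.\ $s=\varphi$. Making this reduction rigorous — proving the maximizer is exactly this pentagon and that no normalized body with $s>\varphi$ survives the contact conditions — is the main obstacle; it is precisely where the golden quadratic $s^2-s-1=0$, and hence $\varphi$, is produced.

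Finally, the rigidity follows from the equality analysis. At $s=\varphi$ both relations $M=\varphi b$ and $b=M(\varphi-1)$ hold at once, which pins the top contact to $(0,\varphi b)$, the base to height $-b$ across the full width $[-1,1]$, and the shoulders to $(\pm1,0)$; symmetry about the $x_2$-axis is forced by the two simultaneously tight constraints. Rescaling so that $b=1$ identifies the normalized body with $\mathbb{GH}=\conv\{(\pm1,0),(\pm1,-1),(0,\varphi)\}$, and undoing the normalizing transformation yields a linear map $L$ with $L(K)=\mathbb{GH}$, as claimed.
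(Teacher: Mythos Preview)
The paper does not prove this proposition; it is quoted from \cite{BDG}, so there is no in-paper argument to compare against. I will therefore assess your attempt on its own.

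Your opening step is correct and worth keeping: the support-function chain really does show $\alpha(K)=1\iff\tau(K)=1$, both being equivalent to the existence of a direction $u$ with $h_{K\cap(-K)}(u)=h_K(u)=h_K(-u)$, and hence to an antipodal pair $\pm x^\ast\in K\cap(-K)$ on parallel supporting lines of $K$. The normalization via \Cref{prop:old_results} is also fine, and your calculation for the specific house $\conv\{(0,M),(\pm1,0),(\pm1,-b)\}$ is right: \Cref{cor:zero-inside} forces three well-spread contacts, only the bottom vertex and the two roof contacts are available, so $M=sb$ and $b=M(s-1)$ must both hold, yielding $s^2-s-1=0$.

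The genuine gap is exactly where you locate it, and it is not a technicality. First, the compactness claim fails as stated: without a further normalization in the $x_2$-direction the family is unbounded (the symmetric rhombi $\conv\{(\pm1,0),(0,\pm N)\}$ all satisfy your conditions), and even after rescaling you must still check that a Hausdorff limit remains Minkowski centered \emph{at $0$}. Second, and more seriously, the replacement ``cut out by the supporting lines with normals $a^1,a^2,a^3,\pm u^1$'' does not do what you want. If $P$ is that intersection, then $K\subset P$ and $P$ still sits in the strip, but there is no reason for $-\tfrac1sP\subset P$ to hold, so you cannot conclude $s(P)\ge s(K)$, let alone that $P$ is Minkowski centered at $0$. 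Finally, even granting a polygonal maximizer with five active constraints, you silently assume it is symmetric about the $x_2$-axis when you write it as $\conv\{(0,M),(\pm1,0),(\pm1,-b)\}$; nothing in your argument forces two of the $a^i$ to be reflections of one another, and without that symmetry the two-equation system producing $s^2-s-1=0$ does not arise. This reduction---from an arbitrary normalized $K$ to the symmetric pentagon---is the entire content of the bound, and it is missing.
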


For $K \in \K^n$ Minkowski centered we call any $p \in \bd(K) \cap \bd\left(-\frac{1}{s(K)}K\right)$ an \cemph{asymmetry point} of $K$, and any triple of asymmetry points, with the properties as stated in \Cref{cor:zero-inside}, to be \cemph{well-spread}. For $K \in \K^2$ we call $z^1, z^2 \in \bd(K) \cap \bd(-K)$ \cemph{consecutive}, if there exists no point $z \in \bd(K) \cap \bd(-K) \cap \inter (\pos \{z^1, z^2\})$.

While the first two statements of the following proposition combine 
\cite[Theorem 1.2]{BDG2} and \cite[Lemma 3.1]{BDG2}, the third one is added here for the sake of completeness.
\begin{proposition}\label{prop:crossings}   
Let $K \in\K^2$ be Minkowski centered with $s(K) \geq \varphi$ and let $z^1, z^2, z^3 \in \bd(K) \cap \bd(-K)$ be such that $z^1$,$z^2$ and $z^2,z^3$ are consecutive. Then the following holds: 
\begin{enumerate}[(i)]
\item the set
$\bd(K) \cap \bd(-K)$ consists of exactly 6 points, 
\item there exists either an asymmetry point of $K$ or of $-K$ in $\pos\{z^1,z^2\}$,
\item if there is an asymmetry point of $K$ in $\pos\{z^1,z^2\}$, there is no asymmetry point of $K$ in $\pos\{z^2,z^3\}$.
\end{enumerate}
\end{proposition}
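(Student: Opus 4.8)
The plan is to treat parts (i) and (ii) as supplied by \cite{BDG2} and to concentrate on (iii). By (i), $\bd(K)\cap\bd(-K)$ consists of six points; since this set is symmetric about $0$, I relabel them cyclically as $z^1,\dots,z^6$ — keeping the three points of the statement as $z^1,z^2,z^3$ — so that $z^{i+3}=-z^i$ (indices mod $6$), and the six rays $\pos\{z^i\}$ cut $\R^2$ into the cones $\Gamma_i:=\pos\{z^i,z^{i+1}\}$ with $\Gamma_{i+3}=-\Gamma_i$; thus $\Gamma_1=\pos\{z^1,z^2\}$ and $\Gamma_2=\pos\{z^2,z^3\}$. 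Since $s(K)\ge\varphi>1$, \Cref{cor:zero-inside} furnishes a well-spread triple $p^1,p^2,p^3$ of asymmetry points of $K$ with $0\in\inter\conv\{p^1,p^2,p^3\}$; in particular $0\in\inter(K)$, so the radial functions $\rho_K(u)=\max\{\lambda\ge0:\lambda u\in K\}$ and $\rho_{-K}(u)=\rho_K(-u)$ are positive and finite on the unit circle.

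The key intermediate step is to locate the asymmetry points of $K$ among the $\Gamma_i$. I would study $f(u):=\rho_K(u)-\rho_K(-u)$, an odd continuous function on the unit circle whose zero set is exactly $\{x/\|x\|:x\in\bd(K)\cap\bd(-K)\}$, hence the six directions of $z^1,\dots,z^6$; therefore $f$ has a constant non-zero sign on each of the six open arcs between those directions, and by oddness antipodal arcs carry opposite signs, so exactly three of the $\Gamma_i$ — one from each pair $\{\Gamma_i,\Gamma_{i+3}\}$ — satisfy $f<0$ on their interiors. Call these the $K$-cones. Unwinding the definition, an asymmetry point $p$ of $K$ satisfies $p\in\bd(K)$ and $-s(K)p\in\bd(K)$, so $\rho_K(-p/\|p\|)=s(K)\rho_K(p/\|p\|)$ and hence $f(p/\|p\|)=-(s(K)-1)\rho_K(p/\|p\|)<0$; moreover $p/\|p\|$ is not the direction of any $z^j$, where $f$ vanishes. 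Thus every asymmetry point of $K$ — in particular each $p^i$ — lies in the interior of a $K$-cone.

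To finish, suppose towards a contradiction that the hypothesis of (iii) holds, so that asymmetry points of $K$ sit both in $\Gamma_1$ and in $\Gamma_2$; then $\Gamma_1$ and $\Gamma_2$ are $K$-cones, and since $\Gamma_4=-\Gamma_1$ and $\Gamma_5=-\Gamma_2$ are not, the third $K$-cone is $\Gamma_3=\pos\{z^3,z^4\}$ or $\Gamma_6=\pos\{z^6,z^1\}$. If it is $\Gamma_3$, then $\Gamma_1\cup\Gamma_2\cup\Gamma_3$ is the closed half-plane $H$ bounded by the line through $z^1$ and $z^4=-z^1$, and the interiors of its three constituent cones all lie in the open half-plane $\inter H$; the case $\Gamma_6$ is symmetric, with $H$ bounded by the line through $z^3$ and $z^6=-z^3$. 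In either case $p^1,p^2,p^3\in\inter H$, whence $\conv\{p^1,p^2,p^3\}\subset\inter H$, which does not contain $0\in\bd H$ — contradicting $0\in\inter\conv\{p^1,p^2,p^3\}$. I expect the one delicate point to be the bookkeeping for $f$: checking that its six sign-arcs genuinely alternate (so that there are exactly three $K$-cones, one per antipodal pair) and that asymmetry points avoid the rays $\pos\{z^j\}$ so that they sit strictly inside cones; granting this, the concluding half-plane argument is elementary, and it is worth noting that the proof of (iii) invokes (i) and \Cref{cor:zero-inside} but not (ii).
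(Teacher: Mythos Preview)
Your proof of (iii) is correct and takes a genuinely different route from the paper. The paper argues as follows: if asymmetry points $p^1,p^2$ of $K$ lie in the two adjacent cones $\pos\{z^2,z^3\}$ and $\pos\{z^1,z^2\}$, then $\|x\|_K\ge\|x\|_{-K}$ throughout $\pos\{p^1,p^2\}$, so at $z^2$ --- which lies strictly inside this cone --- the bodies $K$ and $-K$ share a supporting line; hence $z^2\in\bd(K\cap(-K))\cap\bd(\conv(K\cup(-K)))$, forcing $\alpha(K)=1$ and therefore $s(K)\le\varphi$ by \Cref{prop:GoldenHouse}, with the boundary case $s(K)=\varphi$ disposed of by checking the golden house directly. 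Your argument instead packages the sign of $\rho_K-\rho_{-K}$ globally: every asymmetry point lands in the interior of one of the three ``$K$-cones'', and if two adjacent cones are $K$-cones then all three $K$-cones fill a closed half-plane through $0$, contradicting $0\in\inter\conv\{p^1,p^2,p^3\}$ from \Cref{cor:zero-inside}. Your approach is more self-contained --- it avoids the $\alpha$-machinery and the separate treatment of $s(K)=\varphi$, and in fact works uniformly for all $s(K)>1$ --- whereas the paper's approach ties the obstruction directly to the invariant $\alpha(K)$ and the golden house, which is thematically closer to the rest of the paper. The ``delicate bookkeeping'' you flag is harmless: oddness of $f$ already gives exactly one $K$-cone per antipodal pair (no alternation around the circle is needed), and $f(p/\|p\|)<0$ at an asymmetry point $p$ rules out $p$ lying on any ray $\pos\{z^j\}$.
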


\begin{proof}[Proof of \Cref{prop:crossings} (iii)]
Let $z^1, z^2, z^3 \in \bd(K) \cap \bd(-K)$ be consecutive and $p^1\in\pos\{z^1,z^2\}$, $p^2\in\pos\{z^1,z^2\}$ be two asymmetry points.

Assume we have three consecutive points $z^1, z^2, z^3 \in \bd(K) \cap \bd(-K)$ such that $p^1\in\pos\{z^2,z^3\}$, $p^2\in\pos\{z^1,z^2\}$ are asymmetry points of $K$. 
Since $sp^1, sp^2 \in \bd(-K)$, for $x\in\pos\set{p^1,p^2}$ holds $\norm{x}_K\geq \norm{x}_{-K}$. 
But since $z^2 \in \bd(-K) \cap \bd(_K) \in \pos\set{p^1,p^2}$, there is a hyperplane supporting both $K$ and $-K$ at $z^2$.

This implies $\alpha(K)=1$ because $z^2$ is contained in both the boundary of $\conv(K\cup(-K))$ and $K\cap(-K)$ and therefore $s(K)\leq \varphi$. If $\alpha(K)=1$ and  $s(K)=\varphi$, we know by \Cref{prop:GoldenHouse} that $K$ is a linear transformation of the golden house for which the claim holds. 
By \Cref{lem:sixareas}, the asymmetry points are also well-spread. 
\end{proof}

\section{The Proof of Theorem 1.2}
In this section we prove \Cref{thm:PlanarCase} which describes the containment factor between the minimum $K\cap(-K)$ and the arithmetic mean $\frac{K-K}{2}$ of a planar Minkowski-centered convex body $K$. We begin with a technical lemma which shows that we can transform a general $K$ with $s(K)>\varphi$ into a heptagon with the same asymmetry and larger or equal $\tau$-value.

\begin{lemma}\label{lem:beforePlanarCase}
Let $K\in\mathcal K^2$ be Minkowski centered with $s(K)>\varphi$. Then there exist a point $p \in \bd(K) \cap \bd(-K)$ and $p^1,p^2,p^3 \in \R^2$, such that 
\begin{equation*}
 \bar K=\conv(\{\pm p, p^2,p^3,-s(K)p^1,-s(K)p^2, -s(K)p^3\})
\end{equation*}
is Minkowski centered, $p^1,p^2,p^3 \in \R^2$ form a triple of well-spread asymmetry points of $\bar K$, $p^1 =-\gamma d^1$ for some 
$0< \gamma < 1$, where $d^1=\aff (\{-p,p^3\}) \cap \aff (\{p,p^2\})$, and the following properties are fullfilled: 
\begin{enumerate}[(i)]
  \item $s(\bar K)=s(K)$, 
  \item $\tau(\bar K) \geq \tau(K)$, and
  \item $p \in \bd(\bar K \cap (-\bar K)) \cap \tau(\bar K) \bd \left(\frac{\bar K-\bar K}{2}\right)$.
\end{enumerate}

\end{lemma}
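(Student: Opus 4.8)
The plan is to construct $\bar K$ in two stages, first reducing $K$ to a body whose only relevant features are a sextuple of boundary crossings and three well-spread asymmetry points, then shrinking that body further to a heptagon without losing asymmetry or $\tau$-value. Concretely, I would start by invoking \Cref{prop:crossings}(i): since $s(K)>\varphi$, the set $\bd(K)\cap\bd(-K)$ consists of exactly six points $z^1,\dots,z^6$ in cyclic order, and by \Cref{cor:zero-inside} there is a well-spread triple of asymmetry points $q^1,q^2,q^3$, one per every-other sector $\pos\{z^i,z^{i+1}\}$ (using parts (ii) and (iii) of \Cref{prop:crossings} to place them consistently). Replacing $K$ by $\conv(\{q^1,q^2,q^3,-s(K)q^1,-s(K)q^2,-s(K)q^3\}\cup(\bd(K)\cap\bd(-K)))$ — i.e. taking the convex hull of the asymmetry points, their $-s(K)$-scaled partners, and the six crossings — gives a body $K'\subset K$ with $s(K')=s(K)$ (the inclusion $K'-c\subset s(K)(c-K')$ still holds with $c=0$ since the generators of $K'$ witness it) and, crucially, $K'\cap(-K')=K\cap(-K)$ because the six crossings and the convexity pin down the symmetric part. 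Since $K'\subset K$ also forces $\frac{K'-K'}2\subset\frac{K-K}2$, monotonicity of the optimal-containment factor under this kind of nesting — this is where \Cref{prop:old_results} and the definition of $\optc$ come in — yields $\tau(K')\geq\tau(K)$. After a linear transformation (permitted by \Cref{prop:old_results}) I may assume $q^1=-\gamma d^1$ with $d^1=\aff(\{-p,q^3\})\cap\aff(\{p,q^2\})$ for a suitable crossing $p=z^i$; the role of $p$ is that it is the crossing ``opposite'' $q^1$, so that the two edges of $K'$ adjacent to $q^1$ lie on the lines $\aff(\{-p,q^3\})$ and $\aff(\{p,q^2\})$.

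Next I would argue that $K'$ already has the form $\conv(\{\pm p, q^2,q^3,-s(K)q^1,-s(K)q^2,-s(K)q^3\})$, i.e. it is a heptagon with vertex set among these seven points, by showing no other crossing $z^j$ can be an extreme point of $K'$ once we have taken the hull above — each of the remaining four crossings lies in the sector spanned by an asymmetry point and an $-s$-scaled one and is dominated by the segment joining its neighbours, because a supporting line at such a crossing would (by the argument in the proof of \Cref{prop:crossings}(iii)) force $\alpha(K)=1$, contradicting $s(K)>\varphi$ via \Cref{prop:GoldenHouse}. This pins down the combinatorial type and gives $p^1=q^1$, $p^2=q^2$, $p^3=q^3$ and claims (i) and (ii). For the constant $\gamma\in(0,1)$: $\gamma>0$ because $q^1\neq0$, and $\gamma<1$ because $\gamma=1$ would put $q^1=d^1$ on the intersection of the two lines through $p,-p$, forcing those lines to be supporting lines of $K'$ at $q^1$ that simultaneously support $-K'$ at $-q^1$, again collapsing to the $\alpha(K)=1$ case.

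For claim (iii), that $p\in\bd(\bar K\cap(-\bar K))\cap\tau(\bar K)\bd(\frac{\bar K-\bar K}2)$: the point $p$ is a crossing of $\bar K$, hence automatically in $\bd(\bar K\cap(-\bar K))$, so the content is that $p$ lies on the scaled boundary of $\frac{\bar K-\bar K}2$, i.e. that $\|p\|_{(\bar K-\bar K)/2}=\tau(\bar K)$. I would establish this by using \Cref{prop:Opt_Containment}: the optimal containment $\bar K\cap(-\bar K)\optc\tau(\bar K)\frac{\bar K-\bar K}2$ is certified by two or three contact points, and I would show the natural choice of contact points forces $p$ (or $-p$, which is equivalent by symmetry of both bodies) to be one of them — essentially because the only directions in which $\frac{\bar K-\bar K}2$ is ``tight'' against $\bar K\cap(-\bar K)$ are the ones realized at the crossings adjacent to the short edges near $q^1$, and $p$ sits on such an edge by the definition $p^1=-\gamma d^1$. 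If $p$ happens not to be forced as a contact point by the generic argument, I would instead choose $p$ among the crossings to be one that is; the freedom in selecting which crossing plays the role of $p$ in the statement is exactly what makes this work.

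The main obstacle I anticipate is claim (ii), the monotonicity $\tau(\bar K)\geq\tau(K)$: it is tempting to say ``$K'\subset K$ implies $K'\cap(-K')\subset K\cap(-K)$ and $\frac{K'-K'}2\subset\frac{K-K}2$, done,'' but the optimal-containment factor is a ratio of a shrinking numerator to a shrinking denominator, so it is not monotone for free — one must exploit that $K'$ was built so that $K'\cap(-K')=K\cap(-K)$ exactly (only the outer body $\conv$ shrinks), which makes the numerator unchanged and the denominator smaller, hence the ratio larger. Getting the construction of $K'$ precisely right so that the symmetric intersection is genuinely preserved — in particular checking that throwing away all boundary points of $K$ other than the seven listed does not create a new, larger $K'\cap(-K')$ — is the delicate point, and it is where the exact-six-crossings structure from \Cref{prop:crossings}(i) and the well-spreadness from \Cref{cor:zero-inside} are indispensable.
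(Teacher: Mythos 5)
Your overall two-stage plan (cut $K$ down to a body determined by the six crossings and a well-spread triple of asymmetry points, then pass to a heptagon) mirrors the paper's strategy, but the execution has a gap at exactly the point you flag as delicate. Your $K'$ is the convex hull of finitely many points (the six crossings, the $q^i$ and the $-s(K)q^i$), so it is a polygon, and in general $K'\cap(-K')\subsetneq K\cap(-K)$: between two consecutive crossings the boundary of $K\cap(-K)$ is an arc of $\bd(K)$ or of $\bd(-K)$, which your hull replaces by a chord. Hence the claim "$K'\cap(-K')=K\cap(-K)$ because the six crossings and convexity pin down the symmetric part" is false, and with it your justification of $\tau(K')\ge\tau(K)$ collapses -- both the inner and the outer body shrink, which, as you yourself note, gives no monotonicity. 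The paper avoids this by defining $K'=\conv\bigl((K\cap(-K))\cup\{-s(K)p^1,-s(K)p^2,-s(K)p^3\}\bigr)$, i.e.\ keeping the \emph{entire} intersection in the hull, and then checking via \Cref{prop:crossings} that $\bd(-K')$ does not cut the new segments, so that the intersection is genuinely preserved. A second unsupported step is your claim that $K'$ "already has the form" of the heptagon: being an extreme point of $K'$ does not produce a line supporting both $K'$ and $-K'$ at that crossing, so the $\alpha(K)=1$ contradiction via \Cref{prop:GoldenHouse} does not apply, and generically the other four crossings \emph{are} vertices. Passing to the heptagon is a genuine further reduction that again changes $K'\cap(-K')$ and $\frac{K'-K'}{2}$; this is why the paper needs Steps 1--4 of part b), each with its own verification that the asymmetry is kept, that $\tau$ does not decrease (e.g.\ comparing $\rho_3(\bar z^{i,j})$ at the \emph{new} crossings with $\rho_2(z^{i,j})$), and that the contact direction stays at $p$.

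Two further points. First, the normalization $p^1=-\gamma d^1$ cannot be obtained "after a linear transformation": both $p^1$ and $d^1=\aff(\{-p,p^3\})\cap\aff(\{p,p^2\})$ transform covariantly under any linear map, so whether $p^1$ lies on the ray through $-d^1$ is a linear invariant of the configuration. In the paper this is arranged by an explicit additional modification (Step 4), sliding the vertex $-s p^1$ along $\aff(\{-sp^2,-sp^3\})$ onto that ray and proving that $\tau$ and the contact point are unchanged. Second, property (iii) is asserted rather than proved in your proposal: the substantive content is that the minimum of $\rho(v)$ (the factor carrying $v\in\bd(K'\cap(-K'))$ to $\bd\bigl(\frac{K'-K'}{2}\bigr)$) is attained at one of the six crossings. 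The paper proves this by an iterated bisection argument inside each cone $\pos(\{-sp^3,p\})$, exploiting that the relevant boundary pieces of $\frac{K'-K'}{2}$ are segments parallel to a supporting line of $K'\cap(-K')$ at $p$. Your fallback "choose $p$ among the crossings to be a contact point" presupposes that some crossing is a contact point, which is precisely what has to be shown; invoking \Cref{prop:Opt_Containment} only yields that contact points exist somewhere on $\bd(K'\cap(-K'))$.
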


\begin{proof} 
The proof is split into two parts. The goal is to transform a general convex set $K$ into a convex body with an "simplier shape" while keeping the asymmetry and preserving the needing properties. In $a)$ we show that we can cut off everything except the intersection $K\cap(-K)$ and the asymmetry points $-sp^i$, $i\in\set{1,2,3}$, without changing the asymmetry or decreasing the value of $\tau$. In $b)$, we further transform the convex body into a certain heptagon. 
\begin{enumerate}[a)] \item 
We show that there exist a triple of well-spread asymmetry points of $K$ $p^1,p^2,p^3$, such that 
\begin{equation}\label{eq:K'}
K':=\conv ((K \cap (-K)) \cup \{-s(K)p^1, -s(K)p^2, -s(K)p^3 \})     
\end{equation}
is Minkowski centered with
\begin{enumerate}[(i)]
  \item $s(K')=s(K)$, 
  \item $\tau(K') \geq \tau(K)$, and
  \item there exists $p \in \bd(K' \cap (-K')) \cap \tau(K') \bd \left(\frac{K'-K'}{2}\right)$ with $p \in \bd(K') \cap \bd(-K')$.
\end{enumerate}
\vspace{3mm}
 Let $s:=s(K)$. Since $s>\varphi$, by \Cref{prop:crossings} $\bd(K) \cap \bd(-K)$ consists of exactly 6 points and there exist consecutive points $z^{i,i+1}, z^{i,i+2} \in \bd(K) \cap \bd(-K)$  with $-p^i \in \pos \{z^{i,i+1}, z^{i,i+2}\}$, $i=1,2,3$ for well-spread asymmetry points $p^1,p^2,p^3$ of $K$.
Indices are to be understood modulo 3.

We show the properties (i)-(iii) for the set $K'$. 

Note that $s(K')=s(K)$. This can be easily seen from the fact that $\frac{1}{s}(K\cap(-K))\subset -K'$ and $\set{-p^1,-p^2,-p^3}\subset (K\cap(-K))\subset-K'$. The points $p^1,p^2,p^3$ are still well-spread asymmetry points of $K'$. Note that the boundary of $-K'$ does not intersect the segments $[p^i, z^{i,j}]$, $i=1,2,3, j=i+1,i+2$, in the boundary of $K'$. Otherwise, we would have more than six intersection points between the boundaries of $K'$ and $-K'$, contradicting \Cref{prop:crossings}. 

Observe that $K'\cap(-K') = K \cap(-K)$ together with $K' \subset K$ implies $\tau(K')\geq \tau(K)$.
\begin{figure}[ht] 
\centering
\begin{tikzpicture}[scale=5]
\draw [thick] plot [smooth cycle] coordinates {(-0.1,0.45) (0.5,-0.37) (1.7,0.3)};
\draw [thick, gray, rotate around={180:(0.68,0.12)}] plot [smooth cycle] coordinates {(-0.1,0.45) (0.5,-0.37) (1.7,0.3)};

\draw [thick, dotted] (0.68,0.12) -- (1.69,0.29);
\draw [thick, dotted, rotate around={180:(0.68,0.12)}] (0.68,0.12) -- (1.69,0.29);
\draw [thick, dotted] (0.68,0.12) -- (0.47,-0.37);
\draw [thick, dotted, rotate around={180:(0.68,0.12)}] (0.68,0.12) -- (0.47,-0.37);
\draw [thick, dotted] (0.68,0.12) -- (-0.12,0.41);
\draw [thick, dotted, rotate around={180:(0.68,0.12)}] (0.68,0.12) -- (-0.12,0.41);


\fill [fill=lgold, fill opacity=0.7, domain=1:1.3, variable=\x, rotate around={180:(0.68,0.12)}] (0.42,0.47)--(0.89,0.6) -- (1.17,0.41)--(1.3,0.24)--(1.43,0.04)--(1.485,-0.17)--(0.93,-0.232)--(0.4,-0.2)--(0.2,-0.17)--(-0.35,-0.05)--(-0.07,0.2)--(0,0.25);

\fill [fill=lgold, fill opacity=0.7, domain=1:1.3, variable=\x] (0.42,0.47)--(0.89,0.6) -- (1.17,0.41)--(1.3,0.24)--(1.43,0.04)--(1.485,-0.17)--(0.93,-0.232)--(0.4,-0.2)--(0.2,-0.17)--(-0.35,-0.05)--(-0.07,0.2)--(0,0.25);

\draw [thick] (1.15,0.42) -- (1.7,0.29) -- (1.43,0.04);
\draw [thick, rotate around={180:(0.68,0.12)}] (0.92,-0.24) -- (1.49,-0.17) -- (1.43,0.04);
\draw [thick, rotate around={180:(0.68,0.12)}] (0.42,0.47) --(0.89,0.6) -- (1.14,0.43);

\draw [thick, rotate around={180:(0.68,0.12)}] (1.15,0.42) -- (1.7,0.29) -- (1.43,0.04);
\draw [thick] (0.92,-0.24) -- (1.49,-0.17) -- (1.43,0.04);
\draw [thick] (0.42,0.47) --(0.89,0.6) -- (1.14,0.43);

\draw [dred, thick] (1.02,0.51) -- (1.31,0.48)--(1.43,0.36); 
\draw [dred, thick] (1.565, 0.17) -- (1.6,0.04)--(1.45,-0.08); 
\draw [dred, thick] (1.232, -0.202) -- (1,-0.28)--(0.7, -0.3); 
\draw [dred, rotate around={180:(0.68,0.12)}, thick] (1.02,0.51) -- (1.31,0.48)--(1.43,0.36); 
\draw [dred, rotate around={180:(0.68,0.12)}, thick] (1.565, 0.17) -- (1.6,0.04)--(1.45,-0.08); 
\draw [dred, rotate around={180:(0.68,0.12)}, thick] (1.232, -0.202) -- (1,-0.28)--(0.7, -0.3); 

\draw [dgreen, thick] plot [smooth] coordinates {(0.35, -0.27) (0.48,-0.29) (0.7, -0.3)};
\draw [dgreen, thick] plot [smooth] coordinates {(1.565, 0.17)(1.5,0.28) (1.44, 0.35)};
\draw [dgreen, thick] plot [smooth] coordinates {(-0.1, 0.31)(-0.01,0.38) (0.13, 0.44)};
\draw [dgreen, rotate around={180:(0.68,0.12)}, thick] plot [smooth] coordinates {(0.35, -0.27) (0.48,-0.29) (0.7, -0.3)};
\draw [dgreen, rotate around={180:(0.68,0.12)}, thick] plot [smooth] coordinates {(1.565, 0.17)(1.5,0.28) (1.44, 0.35)};
\draw [dgreen, rotate around={180:(0.68,0.12)}, thick] plot [smooth] coordinates {(-0.1, 0.31)(-0.01,0.38) (0.13, 0.44)};

\draw (0.59,0.05) node {$0$};
\draw (1.1,0.35) node {$z^{3,1}$};
\draw (1.25,0.05) node {$z^{3,2}=p$};
\draw (0.9,-0.15) node {$z^{1,2}$};
\draw (0.3,-0.1) node {$z^{1,3}$};
\draw (0.05,0.2) node {$z^{2,3}$};
\draw (0.45,0.4) node {$z^{2,1}$};

\draw (0.45,-0.44) node {$-sp^1$};
\draw (-0.22,0.45) node {$-sp^2$};
\draw (1.8,0.33) node {$-sp^3$};
\draw (1.55,-0.2) node {$sp^2$};

\draw (1.6,0.4) node {$K$};
\draw (-0.2,-0.18) node {$-K$};
\draw (1.75,0.05) node {$\frac{K'-K'}{2}$};

\draw [fill] (0.68,0.12) circle [radius=0.01];
\draw [fill] (0.468,-0.363) circle [radius=0.01];
\draw [fill] (-0.125,0.41) circle [radius=0.01];
\draw [fill] (1.7,0.29) circle [radius=0.01];

\draw [fill, rotate around={180:(0.677,0.12)}] (1.7,0.29) circle [radius=0.01];
\draw [fill, rotate around={180:(0.677,0.12)}] (0.468,-0.363) circle [radius=0.01];
\draw [fill, rotate around={180:(0.677,0.12)}] (-0.125,0.41) circle [radius=0.01];

\draw [fill] (1.43,0.04) circle [radius=0.01]; 
\draw [fill,  rotate around={180:(0.68,0.12)}] (1.43,0.04) circle [radius=0.01]; 
\draw [fill] (0.43,0.475) circle [radius=0.01];
\draw [fill,  rotate around={180:(0.68,0.12)}] (0.43,0.475) circle [radius=0.01];
\draw [fill] (1.15,0.42) circle [radius=0.01];
\draw [fill,  rotate around={180:(0.68,0.12)}] (1.15,0.42) circle [radius=0.01];

\draw [fill] (0.35, -0.27) circle [radius=0.01];
\draw [fill] (0.7, -0.3) circle [radius=0.01];
\draw [fill] (1.565, 0.17) circle [radius=0.01];
\draw [fill] (1.44, 0.35) circle [radius=0.01];
\draw [fill] (-0.1, 0.31) circle [radius=0.01];
\draw [fill] (0.13, 0.44) circle [radius=0.01];
\draw [rotate around={180:(0.68,0.12)}, fill] (0.35, -0.27) circle [radius=0.01];
\draw [rotate around={180:(0.68,0.12)}, fill] (0.7, -0.3) circle [radius=0.01];
\draw [rotate around={180:(0.68,0.12)}, fill] (1.565, 0.17) circle [radius=0.01];
\draw [rotate around={180:(0.68,0.12)}, fill] (1.44, 0.35) circle [radius=0.01];
\draw [rotate around={180:(0.68,0.12)}, fill] (-0.102, 0.31) circle [radius=0.01];
\draw [rotate around={180:(0.68,0.12)}, fill] (0.13, 0.44) circle [radius=0.01];

\draw [fill] (1.31,0.48) circle [radius=0.01]; 
\draw [fill] (1.6,0.04) circle [radius=0.01]; 
\draw [fill] (1,-0.28) circle [radius=0.01]; 
\draw [rotate around={180:(0.68,0.12)}, fill] (1.31,0.48) circle [radius=0.01]; 
\draw [rotate around={180:(0.68,0.12)}, fill] (1.6,0.04) circle [radius=0.01]; 
\draw [rotate around={180:(0.68,0.12)}, fill] (1,-0.28) circle [radius=0.01]; 
\end{tikzpicture}
\caption{Construction from \Cref{lem:PlanarCase}: 
$K$ (black), $-K$ (gray), 
$K'$,$-K'$ (yellow), $\bd \left(\frac{K'-K'}{2} \right)$ (red (segments) and green (possibly arcs)). 
}
\label{fig:K'}
\end{figure}
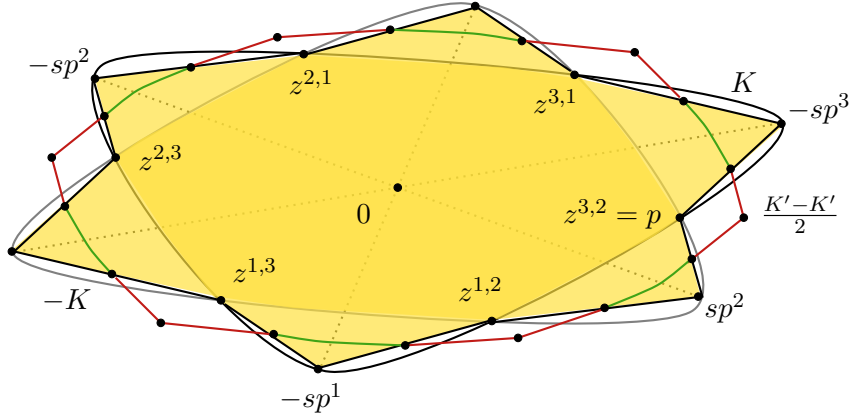

Note that the arithmetic mean $\frac{K'-K'}{2}$ is the convex hull of all midpoints between one extreme point of $K'$ and one of $-K'$.

We define $p \in \bd(K) \cap \bd(-K) \cap \pos\left( \set{-p^3,p^2}\right)$. Then $0$ and $-sp^3$ belong to different halfspaces w.r.t.~the hyperplane $\aff(\set{p,sp^2})$. Indeed, if this would be not the case, this hyperplane would support both $K'\cap(-K')$ and $\conv(\
K'\cup (-K'))$, implying $\alpha(K')=1$, which contradicts $s(K')>\varphi$. 
A similar statement holds for all $\pm sp^i$. 

Note also that $0$ and $p$ belong to the same halfspace w.r.t.~$\aff(\set{-sp^3,sp^2})$.
Thus, we obtain 

\begin{align*}
\bd\left(\frac{K'-K'}{2}\right) &\cap \pos\left(\left\{ -sp^3,\frac{-sp^3+sp^2}{2}\right\}\right) =
\\
 &\left\{ \frac{x-sp^3}{2}, x \in \bd(K') \cap \pos(\{ -sp^3,p\}) \right\} \cup \left[\frac{-sp^3+p}{2}, \frac{-sp^3+sp^2}{2}\right].
\end{align*}

Note that the boundary of $\frac{K'-K'}{2}$ has a similar structure in any other positive hull of this type and it consists of segments and arcs as shown in \Cref{fig:K'}.

 

For any $v \in \bd(K' \cap (-K'))$ we define $\rho(v)>0$ such that 
\[
\rho(v) v \in \bd\left ( \frac{K'-K'}{2}\right). 
\]
Since by \cite[Theorem 1.3]{BDG2} $\frac{K'-K'}{2}\optc \frac{s+1}{2}K'\cap(-K')$, we have $\rho(v)\leq \frac{s+1}{2}$ for all $v\in \bd (K' \cap (-K'))$. 

Observe that 
\[
(\tau(K'))^{-1}=\min_{v \in \bd(K' \cap (-K'))} \rho(v).  
\]

We now show that for any $v \in \bd(K' \cap (-K')) \cap \pos(\{ -sp^3,\frac{-sp^3+sp^2}{2}\})$ 
\[
\rho(v)\geq \rho(p). 
\]
By construction, $\rho(p)p \in [\frac{-sp^3+p}{2}, \frac{-sp^3+sp^2}{2}] \cup [\frac{sp^2+p}{2}, \frac{-sp^3+sp^2}{2}]$. Consider first $v \in \bd(K' \cap (-K')) \cap \pos(\{ \frac{-sp^3+p}{2}, \frac{-sp^3+sp^2}{2}\})$. 
Observe that the hyperplane $\aff(\{ sp^2,p\})$ supports $K'\cap (-K')$ in $p$, and that $\rho(v)v \in \left[\frac{-sp^3+p}{2}, \frac{-sp^3+sp^2}{2}\right]$, which is a segment parallel to $\aff(\{ sp^2,p\})$. Thus, taking $\rho^* \geq 1$ such that $\rho^* p \in \aff(\set{\frac{-sp^3+p}{2}, \frac{-sp^3+sp^2}{2}})$, we see that 
\[
\rho(v)\geq \rho^*\geq \rho(p). 
\]

\begin{figure}[ht] 
    \centering
    \begin{tikzpicture}[scale=6]
        \tkzDefPoint(0,0){0}
    \tkzDefPoint(1,0){z}
    \tkzDefPoint(1.5,0.75){sp}
    \tkzDrawPolygon[black, thick](0,z,sp);
    \tkzDefPointOnLine[pos=.5](z,sp)
    \tkzGetPoint{q1}
\tkzDefPointOnLine[pos=.55](0,sp)
    \tkzGetPoint{p}
    \tkzDefPointOnLine[pos=.5](p,sp)
    \tkzGetPoint{q2}
    \draw [thick,red] (z) to [out=90, in=-40] (p) ;
    \tkzDrawSegment[thick, black](0,q1)
    \draw [thick, brown] (q1) to [out=90, in=-40] (q2) ;

    \tkzDefPointOnLine[pos=.75](0,q1)
    \tkzGetPoint{v1}
    \tkzDrawPoint[fill=black, radius=0.4pt,label=below left:\small{$v^1$}](v1)
    \tkzDrawSegment[thick](v1,sp)
    \tkzDefPointOnLine[pos=.5](v1,sp)
    \tkzGetPoint{q3}
    \tkzDrawPoint[fill=black,radius=0.4pt](q3)
    \tkzLabelSegment[right](sp,q1){\small{$\rho(v^2)v^2$}};
    \tkzDrawLine[thick, dblue, add = 0.2 and 0.5](z,v1)
    \tkzDrawLine[thick, dblue, add = 0.2 and 0.5](q1,q3)
    \tkzDrawSegment[thick](0,q3)
    \tkzDefPointOnLine[pos=0.715](0,q3)
    \tkzGetPoint{v2}
    \tkzDrawPoint[fill=black,radius=0.4pt,label=below left:\small{$v^2$}](v2)
    \tkzDrawPoint[fill=black,radius=0.4pt,label=left:$0$](0)
    \tkzDrawPoint[fill=black,radius=0.4pt,label=right:$p$](z)
    \tkzDrawPoint[fill=black,radius=0.4pt,label=right:$-sp^3$](sp)
    \tkzDrawPoint[fill=black,radius=0.4pt,label=right:\small{$\frac{-sp^3+p}{2}=\rho(v^1)v^1$}](q1)
    \tkzDrawPoint[fill=black,radius=0.4pt,label=above left:\small{$-p^3$}](p)
    \tkzDrawPoint[fill=black,radius=0.4pt,label=above left:\small{$\frac{s+1}{2}(-p^3)$}](q2)
    \draw (0.95,-0.1) node {$H_1^=$};
    \end{tikzpicture}
    \caption{Construction used in the proof of Part a)(iii) of \Cref{lem:beforePlanarCase}: $\bd(-K')$ (red), $\frac{K'-K'}{2}$ (brown).}
    \label{fig:enter-label}
\end{figure}
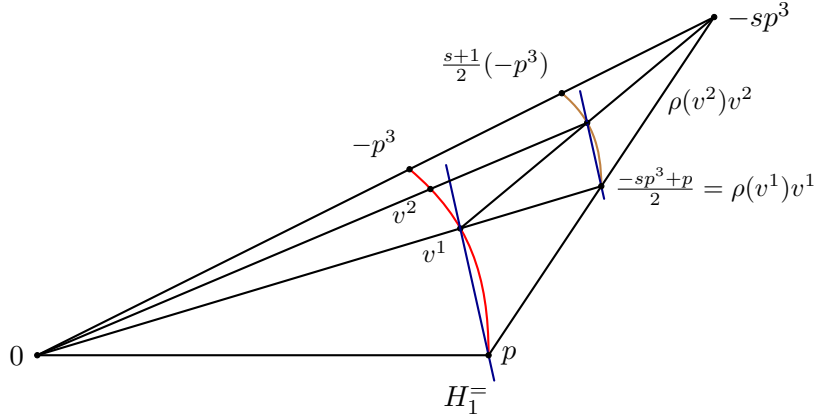

Now let us consider $v \in \bd(K' \cap (-K')) \cap \pos(\{ \frac{-sp^3+p}{2}, -sp^3 \})$.
We define $v^1,v^2\in \bd(K' \cap (-K'))$ such that $\rho(v^1)v^1=\frac{-sp^3+p}{2}$, $\rho(v^2 )v^2=\frac{-sp^3+v^1}{2}$ and $H_1^=:= \aff(\{ v^1,p\})$. Note that $\rho(v^1)\geq\rho(p)$, $\rho(-p^3)=\frac{s+1}{2}\geq \rho(p)$, and the segment $[\rho(v^2) v^2, \rho(v^1) v^1]$ is parallel to $H^=_1$. Denoting by $H_1^\leq$ the halfspace containing the origin defined by $H^=_1$, we have
\[
\bd(K' \cap (-K')) \cap \pos(\{ v^1, -sp^3 \}) \subset H_1^\leq \quad \text{and} \quad [\rho(v^2) v^2, \rho(v^1) v^1] \subset \frac{K'-K'}{2}. 
\]
Thus, for any $v \in \bd(K' \cap (-K')) \cap \pos(\{ v^1, v^2\})$ holds $\rho(v)\geq \rho(v^1)$. 

Since in every step the cone is "cut in half", we can repeat the argument from above iteratively. Thus, by replacing in the next iteration $p,v^1$ with $v^1, v^2$ and so on, one can show that for any $v \in \bd(K' \cap (-K')) \cap \pos(\{ -sp^3, p\})$ holds $\rho(v)\geq \rho(p)$. Thus, one of the intersection points $z^{i,j}$ has the smallest distance to the boundary of $\frac{K'-K'}{2}$ and there exists $p \in \bd(K' \cap (-K')) \cap \tau(K') \bd \left(\frac{K'-K'}{2}\right)$ with $p \in \bd(K') \cap \bd(-K')$, as desired.


\vspace{0.5cm}

\item 
We proceed with the following transformations, defining successively the bodies $K'_i$, $1 \leq i \leq 4$ and showing in each step the desired properties of those sets. 

For any $v \in \bd(K'_i \cap (-K'_i))$ we define $\rho_i(v)>0$ such that 
\[
\rho_i(v) v \in \bd\left ( \frac{K'_i-K'_i}{2}\right),  \quad 1 \leq i \leq 4. 
\]

\vspace{3mm}
\begin{enumerate}[Step 1:]
\item Let $\gamma_i \leq 1$ be such that  $\gamma_i p^i \in [z^{i+1,i}, -z^{i+2,i}]$ for $i=1,2,3$. Replace $K'$ by 
\[
K'_1=\conv\left(\{-\gamma_i sp^i, z^{i,i+1},z^{i,i+2}: i=1,2,3\}\right),
\]
as in \Cref{fig:step1}. 

We show that for $K'_1$ holds 
\begin{enumerate}[(i)]
  \item $s(K'_1)=s(K')$, 
  \item $\tau(K'_1) \geq \tau(K')$, and
   \item there exists $p \in (\bd(K'_1) \cap \bd(-K'_1)) \cap \tau(K'_1) \bd \left(\frac{K'_1-K'_1}{2}\right)$.
\end{enumerate}
\end{enumerate}

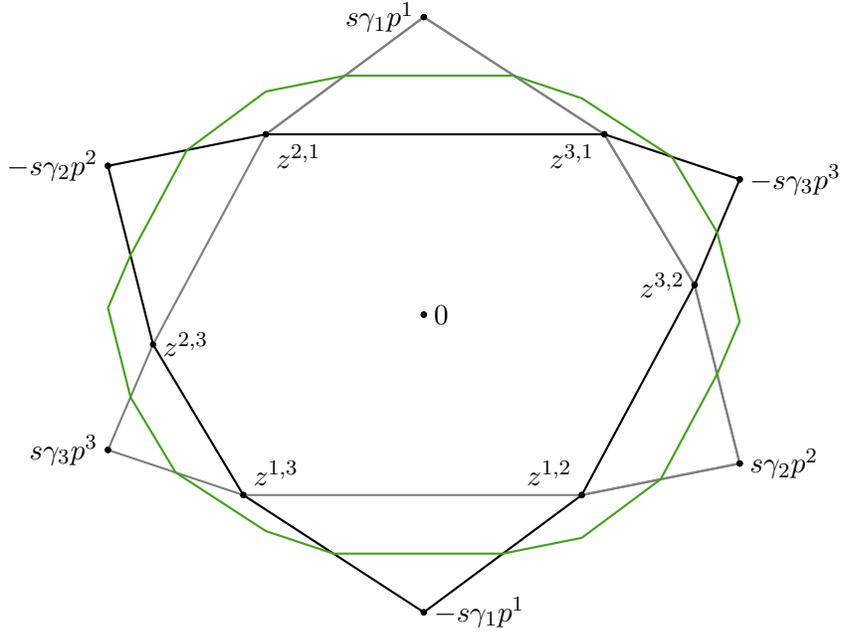
\begin{figure}[ht]
\centering
\scalebox{1}{
 \begin{tikzpicture}[scale=6]
\tkzDefPoints{0/0/zero, 0/.66/sp1, .7/0.3/msp3, 0.4/0.4/z1,-0.4/-0.4/mz1,  -0.35/0.4/z2, 0.35/-0.4/mz2,  0.6/0.066/p, 0.7/-.33/sp2, 0/-0.66/msp1, -0.7/-0.3/sp3, -0.6/-0.066/mp, -0.7/.33/msp2}
\tkzDrawPoints[fill=black](zero,sp1,msp3,sp2,z1,z2,p,mz2,mz1,msp1,sp3,msp2)
\tkzDrawSegments[thick, gray](sp1,z1 z1,p p,sp2 sp2,mz2 sp1,z2)
\tkzDrawPolygon[thick](z2,z1,msp3,p,mz2,msp1,mz1,mp,msp2)
\tkzDrawPolygon[thick, gray](mz2,mz1,sp3,mp,z2,sp1,z1,p,sp2)

\tkzDefMidPoint(sp1,z1) \tkzGetPoint{y1}
\tkzDefMidPoint(sp1,msp3) \tkzGetPoint{y2}
\tkzDefMidPoint(msp3,z1) \tkzGetPoint{y3}
\tkzDefMidPoint(msp3,p) \tkzGetPoint{y4}
\tkzDefMidPoint(msp3,sp2) \tkzGetPoint{y5}
\tkzDefMidPoint(p,sp2) \tkzGetPoint{y6}
\tkzDefMidPoint(mz2,sp2) \tkzGetPoint{y7}
\tkzDefMidPoint(sp2,msp1)\tkzGetPoint{y71}
\tkzDefMidPoint(mz2,msp1)\tkzGetPoint{y72}
\tkzDefMidPoint(msp1,mz2) \tkzGetPoint{y8}
\tkzDefMidPoint(msp1,mz1) \tkzGetPoint{y9}
\tkzDefMidPoint(msp1,sp3) \tkzGetPoint{y10}
\tkzDefMidPoint(sp3,mz1) \tkzGetPoint{y11}
\tkzDefMidPoint(sp3,mp) \tkzGetPoint{y12}
\tkzDefMidPoint(sp3,msp2) \tkzGetPoint{y13}
\tkzDefMidPoint(mp,msp2) \tkzGetPoint{y14}
\tkzDefMidPoint(z2,msp2) \tkzGetPoint{y15}
\tkzDefMidPoint(msp2,sp1)\tkzGetPoint{y17}
\tkzDefMidPoint(z2,sp1)\tkzGetPoint{y18}
\tkzDrawPolygon[dgreen,thick](y1,y2,y3,y4,y5,y6,y7,y71,y72,y8,y9,y10,y11,y12,y13,y14,y15,y17,y18)
\tkzLabelPoint[left](p){$z^{3,2}$}
\tkzLabelPoint[right](msp3){$-s\gamma_3p^3$}
\tkzLabelPoint[left](sp3){$s\gamma_3 p^3$}
\tkzLabelPoint[right](sp2){$s\gamma_2p^2$}
\tkzLabelPoint[left](msp2){$-s\gamma_2p^2$}
\tkzLabelPoint[left](sp1){$s\gamma_1 p^1$}
\tkzLabelPoint[right](msp1){$-s\gamma_1 p^1$}
\tkzLabelPoint[right](zero){$0$}
\tkzLabelPoint[below left](z1){$z^{3,1}$}
\tkzLabelPoint[above left](mz2){$z^{1,2}$}
\tkzLabelPoint[below right](z2){$z^{2,1}$}
\tkzLabelPoint[above right](mz1){$z^{1,3}$}
\tkzLabelPoint[right](mp){$z^{2,3}$}
\tkzDrawPoints[fill=black](zero,sp1,msp3,sp2,z1,z2,p,mz2,mz1,msp1,sp3,msp2,mp)
\end{tikzpicture}}
\caption{Construction used in the proof of  Part b) Step 1 of \Cref{lem:beforePlanarCase}: we replace $-sp^i$ by $-s\gamma_i p^i$ for $i=1,2,3$ and obtain $K'_1$ (black), $-K'_1$ (grey) and $\frac{K'_1-K_1'}{2}$ (green). 
}
    \label{fig:step1}
\end{figure}

On the one hand, observe for $i=1,2,3$ that 
\begin{align*}
\frac{1}{s}K'_1 \cap \pos(\{ z^{i,i+1},z^{i,i+2} \}) &= \conv\left(\left\{0, -\gamma_i p^i, \frac{1}{s} z^{i,i+1}, \frac{1}{s} z^{i,i+2} \right\}\right) \\ &\subset \conv(\{0, z^{i,i+1},z^{i,i+2} \})\\ &= -K'_1 \cap \pos(\{ z^{i,i+1},z^{i,i+2} \}), 
\end{align*}
and 
\begin{align*}
\frac{1}{s}K'_1 \cap \pos(\{ -z^{i,i+1},-z^{i,i+2} \}) &= 
\conv\left(\left\{0, -\frac{1}{s} z^{i,i+1}, -\frac{1}{s} z^{i,i+2} \right\}\right) \\ &\subset \conv(\{0, -z^{i,i+1},-z^{i,i+2} \}) \\ &\subset 
-K'_1 \cap \pos(\{ -z^{i,i+1},-z^{i,i+2} \}),  
\end{align*}
which implies $\frac{1}{s}K'_1 \subset -K'_1$. Since also $\gamma_i p^i, -s \gamma_i p^i \in \bd(K'_1)$, $i=1,2,3$, the set $\set{\gamma_1p^1,\gamma_2p^2,\gamma_3p^3}$ forms a triple of well spread asymmetry points and we see that $K'_1$ is Minkowski centered with $s(K'_1)=s(K')=s$. 

The direction in which $\tau(K'_1$) is attained might have changed compared to $K'$ but since $K'_1$ has a similar structure as $K'$, by Part a) there exists $p \in \bd(K'_1 \cap (-K'_1)) \cap \tau(K'_1) \bd \left(\frac{K'_1-K'_1}{2}\right)$ with $p \in \bd(K'_1) \cap \bd(-K'_1)$. 
Observe that since
\[
\rho_1(z^{3,2}) z^{3,2} \in \left[\frac{-\gamma_3 sp^3+z^{3,2}}{2}, \frac{-\gamma_3 sp^3+\gamma_2 sp^2}{2}\right] \cup \left[\frac{\gamma_2sp^2+z^{3,2}}{2}, \frac{-\gamma_3sp^3+\gamma_2sp^2}{2}\right], 
\]
we have $\rho_1(z^{3,2}) \leq \rho(z^{3,2})$. Similarly, one can show that 
for all $i=1,2,3$ and $j=i+1,i+2$ we have $\rho_1(z^{i,j})\leq \rho(z^{i,j})$ and therefore, 
\[
\tau(K'_1) \geq \tau(K'). 
\] 

Let us assume w.l.o.g. from now on that 
$p=z^{3,2}$. 
\vspace{3mm}
\begin{enumerate}[Step 2:]
\item Let $\hat p^2, \hat p^3$ be defined such that $- \hat p^3 \in [ z^{3,1} , p]$, $ \hat p^2 \in [z^{1,2} , p]$ and $- s\hat p^2, - s\hat p^3 
\in \aff(\{ z^{2,1},z^{3,1}\})$. This means that we move the points $-s\gamma_3p^3, s\gamma_2p^2$ parallel to the segments $[ z^{3,1} , p]$ and $[z^{1,2} , p]$.

Replace $K'_1$ by 
\[
K'_2=\conv\left(\{-s \gamma_1 p^1, -s \hat p^2, -s\hat p^3, z^{i,i+1},z^{i,i+2}: i=1,2,3\}\right),
\]
see \Cref{fig:step2}. 

We show that for $K'_2$ holds 
\begin{enumerate}[(i)]
  \item $s(K'_2)=s(K'_1)$, 
  \item $\tau(K'_2) \geq \tau(K'_1)$, and
   \item $p \in \bd(K'_2 \cap (-K'_2)) \cap \tau(K'_2) \bd \left(\frac{K'_2-K'_2}{2}\right)$. 
\end{enumerate}
\end{enumerate}
Since $-\hat p^3 \in [z^{3,1}, z^{3,2}]$, 
\begin{align*}
\frac{1}{s}K'_2 \cap \pos(\{ z^{3,1},z^{3,2} \}) &= \conv\left(\left\{0, -\hat p^3, \frac{1}{s} z^{3,1}, \frac{1}{s} z^{3,2} \right\}\right) \\ &\subset \conv(\{0, z^{3,1},z^{3,2} \})\\ &= -K'_2 \cap \pos(\{ z^{3,1},z^{3,2} \}), 
\end{align*}
and 
\[
\frac{1}{s}K'_2 \cap \pos(\{ z^{1,2},z^{3,2} \}) \subset -K'_2 \cap \pos(\{ z^{1,2},z^{3,2} \}). 
\]

Thus, we conclude $-\frac{1}{s}K'_2 \subset K'_2$. 
The points $ \gamma_1 p^1, \hat p^2, \hat p^3$ form a well-spread triple of asymmetry points of $K'_2$ by \Cref{lem:sixareas}, which implies that $s(K'_2)=s$. 

As shown in Part a), we only need to consider the points in $\bd(K'_2) \cap \bd(-K'_2)$. 
We know that
\begin{equation*}
    \rho_2(z^{3,1}) z^{3,1} \in \left[\frac{-s\hat p^3+z^{3,1}}{2}, \frac{-s\hat p^3+ s\gamma p^1}{2}\right] \cup \left[\frac{s\gamma_1 p^1+z^{3,1}}{2}, \frac{-s\hat p^3+s\gamma_1 p^1}{2}\right]
\end{equation*}
and 
\begin{equation*}
    \rho_1(z^{3,1}) z^{3,1} \in \left[\frac{-s\gamma_3p^3+z^{3,1}}{2}, \frac{-s\gamma_3p^3+ s\gamma_1 p^1}{2}\right] \cup \left[\frac{s\gamma_1 p^1+z^{3,1}}{2}, \frac{-s\gamma_3p^3+s\gamma_1 p^1}{2}\right]. 
\end{equation*}

Let $H^=_2= \aff\left(\set{\frac{s\gamma_1 p^1+z^{3,1}}{2}, \frac{-s\hat p^3+s\gamma_1 p^1}{2}}\right)$, 
$H_3^= = \aff\left(\set{\frac{-s\hat p^3+z^{3,1}}{2}, \frac{-s\hat p^3+s\gamma_1 p^1}{2}}\right)$ and $H_4^= = \aff\left(\set{\frac{-s \gamma_3p^3+z^{3,1}}{2}, \frac{-s\gamma_3 p^3+s\gamma_1 p^1}{2}}\right)$ (see \Cref{fig:step2}). Let $H_k^{\leq}$, $k=2,3,4$ denote the halfspace 
containing the origin.

Since $H^=_2= \frac{s \gamma_1+1}{2} \aff\left(z^{3,1},-s\hat p^3\right)$ and the hyperplanes $H^{=}_3$, $H^{=}_4$ are parallel to $\left[z^{3,1},s \gamma_1 p^1\right]$, 
\[
\frac{-s\gamma_3p^3+z^{3,1}}{2}, \frac{-s\hat p^3+ s\gamma_1 p^1}{2}, \frac{s\gamma_1 p^1+z^{3,1}}{2} \in H^\leq_2 \cap H^\leq_3,   
\] 
which implies
\[
\rho_2(z^{3,1}) \geq \rho_1(z^{3,1}). 
\]



Analogously, we obtain $\rho_2(z^{1,2}) \geq \rho_1(z^{1,2})$. 

Since $\rho_2(z^{2,1})=\rho_2(z^{1,2})$,$\rho_1(z^{2,1})=\rho_1(z^{1,2})$, $\rho_2(z^{1,3})=\rho_2(z^{3,1})$, and $\rho_1(z^{1,3})=\rho_1(z^{3,1})$,  we have 
\[
\rho_2(z^{2,1}) \geq \rho_1(z^{2,1}) \quad \text{and} \quad \rho_2(z^{1,3}) \geq \rho_1(z^{1,3}). 
\] 

Next, we know
\begin{equation*}
    \rho_2(p) p \in \left[\frac{-s\hat p^3+p}{2}, \frac{-s\hat p^3+ s\hat p^2}{2}\right] \cup \left[\frac{s\hat p^2+p}{2}, \frac{-s\hat p^3+s\hat p^2}{2}\right].
\end{equation*}


Similarly to before, we see that 
   $ \left[\frac{-s\hat p^3+p}{2}, \frac{-s\hat p^3+ s\hat p^2}{2}\right] \cup \left[\frac{s\hat p^2+p}{2}, \frac{-s\hat p^3+s \hat p^2}{2}\right]$ is contained in the half-spaces containing the origin defined by $\aff\left( \set{\frac{-s \gamma_3p^3+p}{2}, \frac{-s\gamma_3 p^3+ s\gamma_2p^2}{2}}\right)$ and $\aff\left( \set{\frac{s\gamma_2p^2+p}{2}, \frac{-s \gamma_3p^3+ s\gamma_2p^2}{2}}\right)$.

Hence, one can show that $\rho_2(\pm p)\leq \rho_1(\pm p)$ and 
\[
\rho_2^{-1}(p)=\tau(K'_2) \geq \tau(K'_1). 
\] 

\begin{figure}[ht]
\centering
\scalebox{0.95}{
 \begin{tikzpicture}[scale=8]
\tkzDefPoints{0/0/zero, 0/.66/sp1, .7/0.3/msp3, 0.4/0.4/z1,-0.4/-0.4/mz1,  -0.35/0.4/z2, 0.35/-0.4/mz2,  0.6/0.066/p, 0.7/-.33/sp2}
\tkzDrawPoints[fill=black](zero,sp1,msp3,sp2,z1,z2,p,mz2)
\tkzDrawSegments[thick, gray](sp1,z1 z1,p p,sp2 sp2,mz2 sp1,z2)
\tkzDrawSegments[thick](z2,z1 z1,msp3 p,msp3 p,mz2)
\tkzDefLine[parallel=through msp3](p,z1)  \tkzGetPoint{x1}
\tkzInterLL(z1,z2)(msp3,x1) \tkzGetPoint{msp3h}
\tkzDrawPoint[fill=black](msp3h)

\tkzDefLine[parallel=through sp2](p,mz2)  \tkzGetPoint{x2}
\tkzInterLL(mz1,mz2)(sp2,x2) \tkzGetPoint{sp2h}
\tkzDrawSegments[dotted](sp2h,sp2 msp3h,msp3)
\tkzDrawSegments[dashed](sp2h,p p,msp3h mz2,sp2h z1,msp3h)
\tkzDrawPoint[fill=black](sp2h)
\tkzDefMidPoint(sp1,z1) \tkzGetPoint{y1}
\tkzDefMidPoint(sp1,msp3) \tkzGetPoint{y2}
\tkzDefMidPoint(msp3,z1) \tkzGetPoint{y3}
\tkzDefMidPoint(msp3,p) \tkzGetPoint{y4}
\tkzDefMidPoint(msp3,sp2) \tkzGetPoint{y5}
\tkzDefMidPoint(p,sp2) \tkzGetPoint{y6}
\tkzDrawSegments[dgreen, thick](y1,y2 y2,y3 y3,y4 y4,y5 y5,y6)
\tkzDefMidPoint(sp1,msp3h) \tkzGetPoint{q2}
\tkzDefMidPoint(msp3h,z1) \tkzGetPoint{q3}
\tkzDefMidPoint(msp3h,p) \tkzGetPoint{q4}
\tkzDefMidPoint(msp3h,sp2h) \tkzGetPoint{q5}
\tkzDefMidPoint(p,sp2h) \tkzGetPoint{q6}
\tkzDrawSegments[orange, thick](y1,q2 q2,q3 q3,q4 q4,q5 q5,q6)
\tkzDrawSegment[dgreen, dotted](y6,q6)
\tkzDrawPoints[orange](y1,q2,q3,q4,q5,q6)
\tkzDrawPoints[dgreen](y2,y3,y4,y5,y6)
\tkzLabelPoint[left](p){$p$}
\tkzLabelPoint[right](msp3){$-s\gamma_3p^3$}
\tkzLabelPoint[right](msp3h){$-s\hat p^3$}
\tkzLabelPoint[right](sp2){$s\gamma_2p^2$}
\tkzLabelPoint[right](sp2h){$s\hat p^2$}
\tkzLabelPoint[left](sp1){$s\gamma_1 p^1$}
\tkzLabelPoint[right](zero){$0$}
\tkzLabelPoint[below left](z1){$z^{3,1}$}
\tkzLabelPoint[above left](mz2){$z^{1,2}$}
\tkzLabelPoint[below right](z2){$z^{2,1}$}
\tkzDrawLine[add= 1 and 1, dashed, orange ](y1,q2)
\tkzLabelLine[above left, orange, pos=-.65](y1,q2){$H^=_2$}
\tkzDrawLine[add= 1 and 0.2, dashed, orange ](q2,q3)
 \tkzLabelLine[above, orange, pos=-.7](q2,q3){$H^=_3$}
 \tkzDrawLine[add= .6 and 0.4, dashed, dgreen ](y2,y3) 
 \tkzLabelLine[above left, dgreen, pos=-.5](y2,y3){$H^=_4$}
 
 \tkzDrawLine[add= .4 and .5, dashed, dgreen](y4,y5)
\tkzDrawLine[add= .5 and 1, dashed, dgreen ](y5,y6)

\tkzLabelPoint[below left, orange](y1){$\frac{s\gamma_1 p^1+z^{3,1}}{2}$}
\tkzLabelPoint[right, dgreen](y2){$\quad \frac{s\gamma_1 p^1-s\gamma_3p^3}{2}$}
\tkzLabelPoint[right, dgreen](y4){$\frac{p-s\gamma_3p^3}{2}$}
\tkzLabelPoint[right, dgreen](y5){$\frac{s\gamma_2p^2-s\gamma_3p^3}{2}$}
\tkzLabelPoint[right,dgreen](y6){$\frac{s\gamma_2p^2+p}{2}$}
\tkzLabelPoint[above right, orange](q2){$ \frac{s\gamma_1 p^1-s\hat p^3}{2}$}
\tkzLabelPoint[left,orange](q6){$\frac{s\hat p^2+p}{2}$}

\tkzDrawPoints[fill=black](zero,sp1,msp3,sp2,z1,z2,p,mz2)

\end{tikzpicture}}
\caption{Construction used in the proof of  Part b) Step 2 of \Cref{lem:beforePlanarCase}: we replace $-s\gamma_3p^3$ by $-s\hat p^3$ and $s\gamma_2p^2$ by $s\hat p^2$. Parts of the boundaries of $\frac{K_1-K_1}{2}$ (green) and $\frac{K_2-K_2}{2}$ (orange) are shown.}
    \label{fig:step2}
\end{figure}
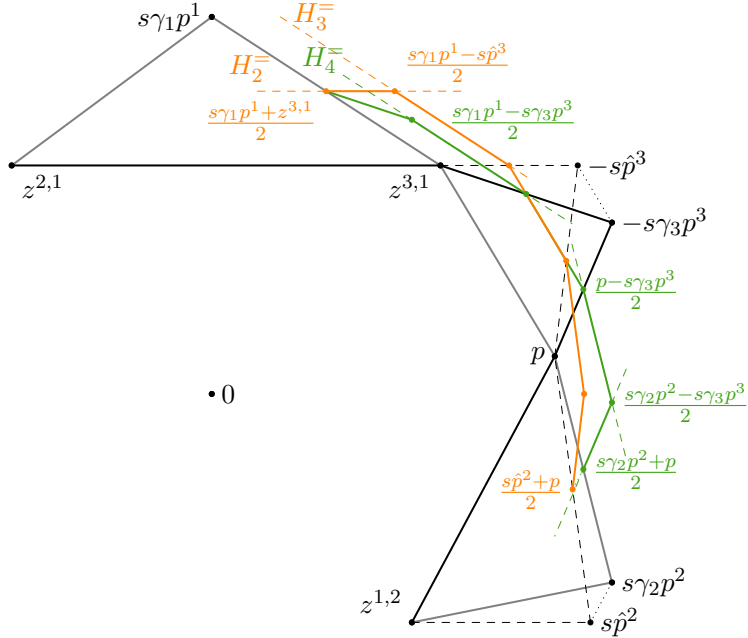

For better readability, let us denote $\gamma_1 p^1$, $\hat p^2$ and $\hat p^3$ by $p^1$, $p^2$ and $p^3$, respectively.

\vspace{3mm}
\begin{enumerate}[Step 3:]
\item Replace $K'_2$ by 
\[
K'_3=\conv\left(\{-s p^1, -s p^2, -s p^3, p^2, p^3, \pm p \}\right),
\] 
see \Cref{fig:step3-1}. 
We show that for $K'_3$ holds 
\begin{enumerate}[(i)]
  \item $s(K'_3)=s(K'_2)$, 
  \item $\tau(K'_3) \geq  \tau(K'_2)$, and
  \item $p \in (\bd(K'_3) \cap \bd(-K'_3)) \cap \tau(K'_3) \bd \left(\frac{K'_3-K'_3}{2}\right)$.
\end{enumerate}
\end{enumerate}
\vspace{3mm}
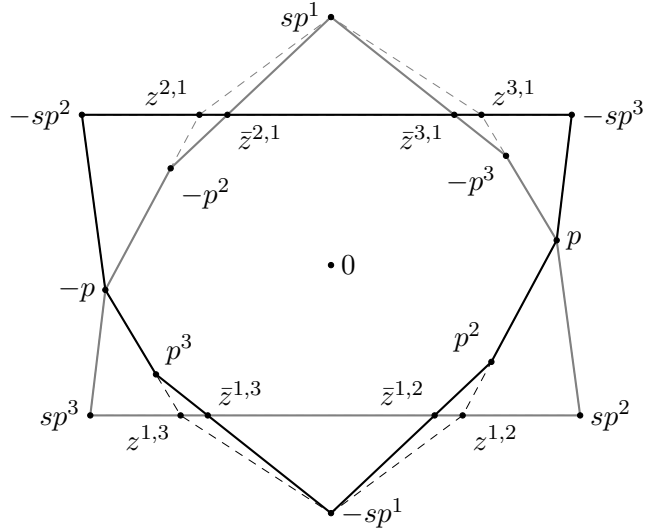
\begin{figure}[ht]
\centering
\scalebox{1}{
 \begin{tikzpicture}[scale=5]
\tkzDefPoints{0/0/zero, 0/.66/sp1, .7/0.3/msp3, 0.4/0.4/z1,-0.4/-0.4/mz1,  -0.35/0.4/z2, 0.35/-0.4/mz2,  0.6/0.066/p, 0.7/-.33/sp2, 0/-0.66/msp1, -0.7/-0.3/sp3, -0.6/-0.066/mp, -0.7/.33/msp2}

\tkzDefLine[parallel=through msp3](p,z1)  \tkzGetPoint{x1}
\tkzInterLL(z1,z2)(msp3,x1) \tkzGetPoint{msp3h}

\tkzDefLine[parallel=through sp2](p,mz2)  \tkzGetPoint{x2}
\tkzInterLL(mz1,mz2)(sp2,x2) \tkzGetPoint{sp2h}
\tkzDefPointBy[symmetry= center zero](msp3h)
 \tkzGetPoint{sp3h}

 \tkzDefPointBy[symmetry= center zero](sp2h) \tkzGetPoint{msp2h}

\tkzInterLL(zero,msp3h)(z1,p) \tkzGetPoint{mp3h}
\tkzInterLL(zero,msp2h)(z2,mp) \tkzGetPoint{mp2h}

\tkzInterLL(zero,sp3h)(mz1,mp) \tkzGetPoint{p3h}
\tkzInterLL(zero,sp2h)(mz2,p) \tkzGetPoint{p2h}

 \tkzInterLL(z1,z2)(mp2h,sp1) \tkzGetPoint{z2h}
  \tkzInterLL(z1,z2)(mp3h,sp1) \tkzGetPoint{z1h}
  \tkzInterLL(mz1,mz2)(p3h,msp1) \tkzGetPoint{mz1h}
  \tkzInterLL(mz1,mz2)(p2h,msp1) \tkzGetPoint{mz2h}

\tkzDrawPolygon[ dashed](z2,z1,msp3h,p,mz2,msp1,mz1,mp,msp2h)
\tkzDrawPolygon[gray, dashed](mz2,mz1,sp3h,mp,z2,sp1,z1,p,sp2h)
\tkzDrawPolygon[thick](msp3h,p,p2h,msp1,p3h,mp,msp2h)
\tkzDrawPolygon[thick, gray](sp3h,mp,mp2h,sp1,mp3h,p,sp2h)
\tkzDrawPoints[fill=black](msp3h,sp2h,msp2h,sp3h, mp3h, mp2h,p3h,p2h)

\tkzLabelPoint[right](p){$p$}
\tkzLabelPoint[right](msp3h){$-sp^3$}
\tkzLabelPoint[left](sp3h){$s p^3$}
\tkzLabelPoint[right](sp2h){$sp^2$}
\tkzLabelPoint[left](msp2h){$-sp^2$}
\tkzLabelPoint[left](sp1){$s p^1$}
\tkzLabelPoint[right](msp1){$-sp^1$}
\tkzLabelPoint[right](zero){$0$}
\tkzLabelPoint[above right](z1){$z^{3,1}$}
\tkzLabelPoint[below right](mz2){$z^{1,2}$}
\tkzLabelPoint[above left](z2){$z^{2,1}$}
\tkzLabelPoint[below left](mz1){$z^{1,3}$}

\tkzLabelPoint[below left](z1h){$\bar z^{3,1}$}
\tkzLabelPoint[above left](mz2h){$\bar z^{1,2}$}
\tkzLabelPoint[below right](z2h){$ \bar z^{2,1}$}
\tkzLabelPoint[above right](mz1h){$\bar z^{1,3}$}
\tkzLabelPoint[left](mp){$-p$}
\tkzLabelPoint[below left](mp3h){$-p^3$}
\tkzLabelPoint[below right](mp2h){$-p^2$}
\tkzLabelPoint[above right](p3h){$p^3$}
\tkzLabelPoint[above left](p2h){$p^2$}

\tkzDrawPoints[fill=black](zero,sp1,z1,z2,p,mz2,mz1,msp1,mp, z1h,z2h,mz1h,mz2h)
\end{tikzpicture}}
\caption{Construction used in the proof of  Part b) Step 3 of \Cref{lem:beforePlanarCase}: We obtain $K'_3$ (black) and $-K'_3$ (grey).}
    \label{fig:step3-1}
\end{figure}

Observe that 
\begin{align*}
\frac{1}{s}K'_3 \cap \pos(\{ -sp^2, -sp^3 \}) &= \conv\left(\left\{0, - p^2, -p^3 \right\}\right) \\ &\subset \conv(\{0, - p^2, -p^3, sp^1 \})\\ &= -K'_3 \cap \pos(\{  -sp^2, -sp^3 \}), 
\end{align*}
and
\begin{align*}
\frac{1}{s}K'_3 \cap \pos(\{ sp^2, -sp^3 \}) &= \frac{1}{s}K'_2 \cap \pos(\{ sp^2, -sp^3 \}) \}), \\
-K'_3 \cap \pos(\{ sp^2, -sp^3 \}) &= -K'_2 \cap \pos(\{ sp^2, -sp^3 \}). 
\end{align*}
Thus, we conclude $-\frac{1}{s}K'_3 \subset K'_3$.

The points $ p^1, p^2, p^3$ also form a well-spread triple of asymmetry points of $K'_3$, which implies $s(K'_3)=s$. 

We denote $\bar z^{i,j} \in (\bd(K'_3) \cap \bd(-K'_3))$, $i,j=1,2,3$. 

Then by Part a) there exists $\bar p \in (\bd(K'_3) \cap \bd(-K'_3)) \cap \tau(K'_3) \bd \left(\frac{K'_3-K'_3}{2}\right)$.

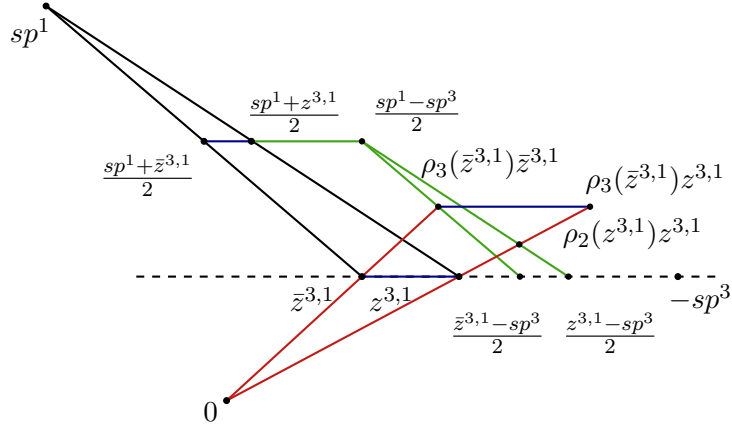
\begin{figure}[H]
\centering
 \begin{tikzpicture}[scale=3]
\draw [thick, dashed] (-1,0)  -- (1.6,0);
\draw [dblue, thick] (-0.49,0.6)  -- (-0.7,0.6);
\draw [dblue, thick] (0,0)  -- (0.43,0);
\draw [thick] (0.43,0)  -- (-1.4,1.2);
\draw [thick] (-1.4,1.2)  -- (0,0);

\tkzDefPoints{-0.6/-0.55/zero, -1.4/1.2/sp1, 1.4/0/msp3, 0.43/0/z, 0/0/zb}
\tkzDefMidPoint(z,sp1) \tkzGetPoint{x1}
\tkzDefMidPoint(zb,sp1) \tkzGetPoint{y1}
\tkzDefMidPoint(z,msp3) \tkzGetPoint{x2}
\tkzDefMidPoint(zb,msp3) \tkzGetPoint{y2}
\tkzDefMidPoint(msp3,sp1) \tkzGetPoint{x3}

\tkzDrawSegments[dgreen,thick](x1,x3 x3,x2 x3,y2)
\tkzInterLL(zero,z)(x3,x2)\tkzGetPoint{rho2}
\tkzInterLL(zero,zb)(x3,y2)\tkzGetPoint{rho3}
\tkzDefLine[parallel=through rho3](z,zb) 
\tkzGetPoint{r}
\tkzInterLL(rho3,r)(zero,z) \tkzGetPoint{rho3z}

\tkzDrawSegment[dblue,thick](rho3z,rho3)
\tkzDrawSegments[dred, thick](zero,rho3z zero,rho3)
\tkzDrawPoints[black,size=2](x2,y2,x3)
\tkzDrawPoints[fill=black,size=2](rho2,rho3,rho3z)
\tkzLabelPoint[above right](x3){$\frac{sp^1-sp^3}{2}$}
\draw (1.1,-0.25) node {$\frac{z^{3,1}-sp^3}{2}$};
\draw (0.6,-0.25) node {$\frac{\bar z^{3,1}-sp^3}{2}$};

\draw [fill] (0,0) circle [radius=0.013]; 
\draw [fill] (-0.6,-0.55) circle [radius=0.013]; 
\draw [fill] (0.43,0) circle [radius=0.013]; 
\draw [fill] (1.4,0) circle [radius=0.013]; 
\draw [fill] (-1.4,1.2) circle [radius=0.013]; 
\draw [fill] (-0.7,0.6) circle [radius=0.013]; 
\draw [fill] (-0.49,0.6) circle [radius=0.013]; 

\draw (-0.67,-0.6) node {$0$};
\draw (-0.22,-0.1) node {$\bar z^{3,1}$};
\draw (0.13,-0.1) node {$z^{3,1}$};
\draw (1.5,-0.1) node {$-sp^3$};
\draw (1.3,0.43) node {$\rho_3(\bar z^{3,1}) z^{3,1}$};
\draw (0.57,0.5) node {$\rho_3(\bar z^{3,1}) \bar z^{3,1}$};
\draw (1.2,0.19) node {$\rho_2(z^{3,1}) z^{3,1}$};
\draw (-1.47,1.08) node {$sp^1$};
\draw (-0.95,0.45) node {$\frac{sp^1+\bar z^{3,1}}{2}$};
\draw (-0.3,0.73) node {$\frac{sp^1+z^{3,1}}{2}$};

\end{tikzpicture}
\caption{Construction used in the proof of  Part b) Step 3 of \Cref{lem:beforePlanarCase}. We see that $\rho_3(\bar z^{3,1})\geq \rho_2(z^{3,1}) $. 
}
    \label{fig:step3}
\end{figure}


Next we show
\[
\rho_3(\bar z^{i,j}) \geq \rho_2(z^{i,j}), \quad \text{ for } i,j=1,2,3 \quad \text{ such that } z^{i,j}\neq \pm p. 
\] 
Observe that it suffices to show 
\[
\rho_3(\bar z^{3,1}) \geq \rho_2(z^{3,1}),
\] 
the argument for $z^{2,1}$ follows analogously. 

Indeed, $\bar z^{3,1} \in [-sp^2, -sp^3]$, $\aff(\{ \frac{sp^1+\bar z^{3,1}}{2}  , \frac{sp^1+z^{3,1}}{2}\})$ is parallel to 
$\aff(\{ \bar z^{3,1} , z^{3,1}\})$, and thus, also parallel to $\aff(\{ \rho_3(\bar z^{3,1}) \bar z^{3,1}, \rho_3(\bar z^{3,1}) z^{3,1}\})$ (see \Cref{fig:step3}). The distance between $\frac{z^{3,1}-sp^3}{2}$ and $\frac{\bar z^{3,1}-sp^3}{2}$ is smaller than the distance between $\rho_3(\bar z^{3,1}) z^{3,1}$ and $\rho_3(\bar z^{3,1}) \bar z^{3,1}$.  This implies $\rho_3(\bar z^{3,1}) \geq \rho_2(z^{3,1})$, see \Cref{fig:step3}.

Furthermore,  $p \in (\bd(K'_3) \cap \bd(-K'_3)) \cap \tau(K'_3) \bd \left(\frac{K'_3-K'_3}{2}\right)$, thus, 
\[
\tau(K'_3) = \tau(K'_2). 
\] 

\vspace{3mm}
\begin{enumerate}[Step 4:]
\item  We define $d^1:=\aff (\{-p,p^3\}) \cap \aff (\{p,p^2\})$ and 
$p_{*}^1 \in \aff (\{-sp^2,-sp^3\})$,
such that $sp_{*}^1 = \gamma (-d^1)$ for some $\gamma \leq \frac{1}{s}$. Replace $K'_3$ by 
\[
K'_4=\conv\left(\{-s p_{*}^1, -s p^2, -s p^3, p^2, p^3, \pm p \}\right). 
\]

We show that for $K'_4$ holds 
\begin{enumerate}[(i)]
  \item $s(K'_4)=s(K'_3)$, 
  \item $\tau(K'_4) =  \tau(K'_3)$, and
  \item $p \in \bd((K'_4) \cap (-K'_4)) \cap \tau(K'_4) \bd \left(\frac{K'_4-K'_4}{2}\right)$.
\end{enumerate}
\end{enumerate}
\vspace{3mm}
As before, note that $-\frac{1}{s}K'_4 \subset K'_4$ and $p_{*}^1, p^2, p^3$ form a well-spread triple of asymmetry points of $K'_4$, which implies that $s(K'_4)=s$.

Next, we show 
\[
\rho_3(\bar  z^{1,2}) \bar  z^{1,2} = \frac{sp^2-sp^1}{2}
\] 
for the $\bar  z^{i,j}$ from the previous step. 
Since $\bar z^{1,2} \in \pos (\{-sp^1,sp^2\})$, there exist $\alpha, \beta \geq 0$, such that 
\[
\bar  z^{1,2} = \alpha (-sp^1) + \beta (sp^2)=   \alpha (-sp^1) + s \beta (p^2). 
\] 
Thus, 
\[
\frac{\|\bar  z^{1,2}- p^2\|}{\|-sp^1-\bar  z^{1,2}\|}=\frac{\alpha}{s\beta} . 
\] 


Since $p^2= \lin (\{sp^2\}) \cap \aff (\{ p^1,\ \bar  z^{1,2}\})$, we have 
\[
\frac{\|-sp^1- p^2\|}{\|-sp^1-\bar  z^{1,2}\|} = 1+\frac{\|\bar  z^{1,2}- p^2\|}{\|-sp^1-\bar  z^{1,2}\|}=\frac{s-\frac1s }{s-1}, 
\] 
i.e., 
\[
\frac{\|\bar  z^{1,2}- p^2\|}{\|-sp^1-\bar  z^{1,2}\|}=\frac{1}{s}. 
\]  
Combining those two facts, we obtain 
\[
\frac{\alpha}{s\beta}= \frac{1}{s}, 
\] 
i.e., $\alpha= \beta$. This implies 
\[
\rho_3(\bar  z^{1,2}) \bar  z^{1,2} = \frac{sp^2-sp^1}{2}. 
\] 
We denote by $z_{*}^{i,j}$ the points in $\bd(K'_4)\cap \bd(K'_4)$. Since $K'_4$ has a similar structure as $K'_3$, we have 
\[
\rho_4(z_{*}^{1,2}) \bar  z^{1,2} = \frac{sp^2-sp_{*}^1}{2}. 
\] 
From the fact that $\aff(\{-sp^1,-sp_{*}^1\})$ is parallel to $\aff(\{\bar  z^{1,2},z_{*}^{1,2}, sp^2\})$ follows 
\[ 
\rho_4( z_{*}^{1,2}) = \rho_3( \bar  z^{1,2}).
\] 
Similarly, one can show 
\[
\rho_3(\bar  z^{1,3}) \bar  z^{1,3} = \frac{sp^3-sp^1}{2}  
\] 
and 
\[
\rho_4( z_{*}^{1,3}) = \rho_3( \bar  z^{1,3}).
\] 

Since $\frac{K'_4-K'_4}{2} \cap \pos (\{-sp^3,sp^2\}) = \frac{K'_3-K'_3}{2} \cap \pos (\{-sp^3,sp^2\})$, we have
\[
\rho_4( p) = \rho_3( p) = \tau^{-1} ( K'_3).
\]  

Thus, we have shown that for any $v \in \ext((K'_4) \cap (-K'_4))$ holds
\[
\rho_4( v) = \rho_3( v). 
\]
Remembering that $\rho_3( p) = \tau^{-1} ( K'_3)$, we finally obtain $\tau ( K'_3)=\tau ( K'_4)$ and 
\[
\rho_4( p) = \tau^{-1} ( K'_4).
\] 

Since $p_{*}^1 = \gamma (-d^1)$, we have  
\[
\bar K=K'_4=\conv(\{\pm p,p^2,p^3,\gamma s d^1,-s p^2, -s p^3\}).  
\]  
\end{enumerate}
\end{proof}

\begin{lemma}\label{lem:PlanarCase}
Let $K\in\mathcal K^2$ be Minkowski centered such that $s(K)>\varphi$. Then 
\begin{equation}\label{eq:bound_alpha}
    \tau(K)\leq c(s(K)).
\end{equation}
\end{lemma}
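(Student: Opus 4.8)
The plan is to reduce, via \Cref{lem:beforePlanarCase}, to the case where $K$ is the heptagon $\bar K = \conv(\{\pm p, p^2, p^3, -s\gamma d^1, -sp^2, -sp^3\})$ with $s = s(K) > \varphi$, $p^1 = -\gamma d^1$ a well-spread asymmetry point, $d^1 = \aff(\{-p,p^3\}) \cap \aff(\{p,p^2\})$, and such that $p \in \bd(\bar K \cap (-\bar K)) \cap \tau(\bar K)\,\bd\bigl(\tfrac{\bar K - \bar K}{2}\bigr)$. Since $\tau(\bar K) \geq \tau(K)$ and $s(\bar K) = s(K)$, it suffices to bound $\tau(\bar K)$ from above by $c(s)$. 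Because $\tau$ and $s$ are invariant under regular linear maps (\Cref{prop:old_results}), I would next normalize coordinates: place $p = u^1$ and $-p = -u^1$ on the $x$-axis, and use the remaining two degrees of freedom in $GL_2$ to fix, say, the direction of $p^2$ or to make $d^1$ lie on a coordinate axis, thereby cutting the number of free parameters down to essentially two (for instance the "height" parameters governing $p^2, p^3$ and the scalar $\gamma$).

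The core computation is then explicit. Since $p \in \tau(\bar K)\,\bd\bigl(\tfrac{\bar K - \bar K}{2}\bigr)$, we have $\tau(\bar K) = \rho(p)^{-1}$ where $\rho(p)$ is the dilation factor carrying $p$ to $\bd\bigl(\tfrac{\bar K - \bar K}{2}\bigr)$; from the analysis in \Cref{lem:beforePlanarCase}, $\rho(p)p$ lies on one of the segments $[\tfrac{-sp^3 + p}{2}, \tfrac{-sp^3 + sp^2}{2}]$ or $[\tfrac{sp^2 + p}{2}, \tfrac{-sp^3 + sp^2}{2}]$, i.e.\ on the edge of the difference body joining the midpoints $\tfrac{sp^2 - sp^3}{2}$ to $\tfrac{\pm p - sp^3}{2}$ (resp.\ $\tfrac{\pm p + sp^2}{2}$). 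So $\tau(\bar K)$ is a rational (plus square-root) function of the coordinates, and the constraint is that $\bar K$ must genuinely be Minkowski centered with asymmetry exactly $s$ — this pins down $\gamma$ and the relation between the $p^i$ through the well-spreadness condition $0 \in \inter\conv\{p^1,p^2,p^3\}$, $0 \in \conv\{a^1,a^2,a^3\}$ of \Cref{cor:zero-inside}, together with $sp^i \in \bd(\bar K)$. I would write out these incidence relations, solve for the dependent parameters, substitute into the expression for $\tau(\bar K)$, and then maximize over the one remaining free parameter. The three-case shape of $c(s)$ strongly suggests that the maximizer changes character: for $s$ near $\varphi$ the optimum is a degenerate/boundary configuration (consistent with $c(s)=1$ there by \Cref{prop:GoldenHouse}), for $\varphi < s \leq \hat s$ the interior critical point gives the middle formula $\tfrac{(s^2+1)^2}{(s^2-1)(s^2+2s-1+2\sqrt{s(s^2-1)})}$ (the square root signalling a genuine optimization where a derivative vanishes), and for $\hat s < s \leq 2$ a different boundary configuration gives $\tfrac{2(s^2-2s-1)}{(s-3)(s+1)}$; $\hat s$ is exactly where these two candidate maxima cross.

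The main obstacle I expect is the optimization bookkeeping: after normalization there are still two or three coupled parameters, the objective $\tau(\bar K)$ is piecewise (depending on which edge of $\tfrac{\bar K-\bar K}{2}$ the point $\rho(p)p$ meets, and on the sign conditions ensuring $0$ and $-sp^3$ separate, $0$ and $p$ don't, etc.), and one must verify that the claimed formulas are genuine maxima over the entire admissible region rather than local ones — including checking the boundary pieces of parameter space where the heptagon degenerates to a hexagon or pentagon. A secondary subtlety is confirming that every $(\tau,s)$-configuration arising in the reduction actually satisfies $s(\bar K)=s$ (not less), i.e.\ that the asymmetry is not accidentally smaller; this is where \Cref{cor:zero-inside} and \Cref{lem:sixareas} do the work, but it needs to be tracked carefully through the substitutions. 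Once the maximization is done, the bound $c(s) \leq \tfrac{2}{\varphi+1}\cdot\tfrac{\varphi+1}{?}$—more precisely the monotonicity and the endpoint value $\tfrac{s+1}{2}c(s) \leq \tfrac{\varphi+1}{2}$—follows by elementary calculus on each of the three pieces, and the equality case $L(K) = \GH$ is read off from \Cref{prop:GoldenHouse} together with the degenerate configuration attaining the maximum.
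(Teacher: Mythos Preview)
Your plan is essentially the paper's approach: reduce via \Cref{lem:beforePlanarCase} to the heptagon $\bar K$, normalize coordinates, express $\tau(\bar K)^{-1}=\rho(p)$ via the edge of $\tfrac{\bar K-\bar K}{2}$ that $\rho(p)p$ hits, and optimize under the convexity constraints coming from $-sp^1\in[0,d^1]$, $d^2\in[-\tfrac1\alpha p,-sp^2]$, $d^3\in[\tfrac1\alpha p,-sp^3]$. The one technical choice worth flagging is that the paper runs the optimization in the dual direction---fixing $\tau$ and maximizing $s$ over a single parameter $\nu\in(0,1]$ (after normalizing $p^3=(-1/s,-1)$, $p^2=(1/s,-\nu)$)---which reduces everything to a quadratic in $s$; the two nontrivial branches of $c(s)$ then arise exactly as you anticipate, from whether the interior critical point $\nu^+(\tau)=1-\tau^{-1}+\sqrt{\tau^{-1}(2-\tau^{-1})}$ or the boundary constraint $\nu^*(\tau)=\sqrt{(\tau^{-1}-1)(2\tau^{-1}-1)}$ is active.
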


\begin{proof}
Considering $\bar K$ as in \Cref{lem:beforePlanarCase}, we have $\tau(K) \leq \tau(\bar K)$ and $s(K)=s(\bar K)$. Hence, if $\bar K$ fulfills \eqref{eq:bound_alpha}, we have 
\[
\tau(K) \leq \tau(\bar K) \leq c(s(\bar K))= c(s(K)),  
\]
i.e., $K$ also fulfills \eqref{eq:bound_alpha}. Thus, it suffices to show \eqref{eq:bound_alpha} for $\bar K$. 
Since by \cite[Theorem 1.7]{BDG1} $\frac{2}{s(K)+1} \leq \tau(K) \leq 1$ and $s \in \left[1,2\right]$, we have $\tau \in \left[\frac23,1\right]$.

Now, for any fixed $\tau \in \left[\frac23,1\right]$ we determine the maximal $s=s(\tau)$, such that there exists a Minkowski centered $\bar K \in \K^2$ with $\tau(\bar K) = \tau$ and $s(\bar K)=s$. 

\begin{figure}[ht]
    \centering
 	\begin{tikzpicture}[scale=3]
	\draw [thick, dashed] (-1,0)  -- (1,0); 
	\draw [thick] (-1, -2) -- (-1,1.5); 
	\draw [thick] (1, -2) -- (1,1.5); 
	\draw [thick] (0.8, -2) -- (0.8,1.5); 
	\draw [thick] (-0.8, -2) -- (-0.8,1.5); 

 \draw[->] [thick] (1, 1.3) -- (1.2, 1.3) node[right] {$u$};
	\draw [thick] (-1,0.7) -- (1,0.93); 
	\draw [thick, dashed, gray] (0.03,0.82) -- (-0.08,-1.9); 
	\draw [thick, dashed, gray] (-0.59,-0.53) -- (1,0.93); 
	\draw [thick, dashed, gray] (0.61,-0.42) -- (-1,0.7); 
	\draw [thick]  (-1,0.7)--(1,0.93); 
    \draw [fill=lgold, fill opacity=0.7]  (-1,0.7)--(-0.8,0)--(-0.59,-0.54)--(-0.05,-1.3)--(0.6,-0.42)--(0.8,0)--(1,0.93)--(-1,0.7);

 \draw [thick, dred]  (-1,0.7)--(-0.8,0)--(-0.08,-1.9)--(0.8,0)--(1,0.93)--(-1,0.7);
	
	
	\draw [fill] (0,0) circle [radius=0.02];
	\draw [fill] (-1,0) circle [radius=0.02]; 
	\draw [fill] (1,0) circle [radius=0.01]; 
	\draw [fill] (-0.8,0) circle [radius=0.02]; 
	\draw [fill] (0.8,0) circle [radius=0.02]; 
	\draw [fill] (1,0.93) circle [radius=0.02]; 
	\draw [fill] (-1,0.7) circle [radius=0.02]; 
	\draw [fill] (0.03,0.82) circle [radius=0.02]; 
	\draw [fill] (-0.59,-0.54) circle [radius=0.02]; 
	\draw [fill] (0.6,-0.42) circle [radius=0.02]; 
	\draw [fill] (-0.08,-1.9) circle [radius=0.02]; 
	\draw [fill] (-0.05,-1.3) circle [radius=0.02]; 
    \tkzDefPoint(0,0){zero}
	\draw (-0.16,-0.07) node {$0$};
	\draw (0.7,0.07) node {$p$};
	\draw (-0.7,0.07) node {$-p$};
	\draw (1.15,0.07) node {$\frac{1}{\alpha}p$};
	\draw (-1.15,0.07) node {$-\frac{1}{\alpha}p$};
	\draw (0.7,-0.4) node {$p^2$};
	\draw (-0.72,-0.5) node {$p^3$};
 \tkzDefPoint(-0.05,-1.3){msp1}
    \draw (0.12,-1.2) node {$-sp^1$};
	\draw (0.12,-1.9) node {$d^1$};
	\draw (0.1,0.95) node {$p^1$};
 \tkzDefPoint(-1,0.7){msp2}
 \tkzDefPoint(1,0.93){msp3}
	\draw (-1.17,0.7) node {$-sp^2$};
	\draw (1.15,0.93)  node {$-sp^3$};
\tkzDefPointBy[symmetry=center zero](msp1)
\tkzGetPoint{sp1}
\tkzDefPointBy[symmetry=center zero](msp2)
\tkzGetPoint{sp2}
\tkzDefPointBy[symmetry=center zero](msp3)
\tkzGetPoint{sp3}
\tkzDrawPoints[fill=black,size=3.5](sp1,sp2,sp3)
\tkzDefPoint(0.6,-0.42){p2}
\tkzDefPoint(-0.59,-0.55){p3}
\tkzDefPoint(-0.6,0.42){mp2}
\tkzDefPoint(0.59,0.55){mp3}
\tkzDefPoint(0.8,0){p}
\tkzDefPoint(-0.8,0){mp}
\tkzDrawPoints[fill=black,size=3.5](mp2,mp3)
\tkzDrawPolygon[dashed, gray](sp1,mp2,mp,sp3,sp2,p,mp3)
\tkzDefMidPoint(msp3,p)
\tkzGetPoint{x1}
\tkzDefMidPoint(msp3,sp2)
\tkzGetPoint{x2}
\tkzDefMidPoint(p,sp2)
\tkzGetPoint{x3}
\tkzDefMidPoint(p2,sp2)
\tkzGetPoint{x4}
\tkzDefMidPoint(msp1,sp2)
\tkzGetPoint{x5}
\tkzDefMidPoint(msp1,sp3)
\tkzGetPoint{x6}
\tkzDefMidPoint(sp3,p3)
\tkzGetPoint{x7}
\tkzDefMidPoint(mp,sp3)
\tkzGetPoint{x8}
\tkzDefMidPoint(msp2,sp3)
\tkzGetPoint{x9}

\tkzDefMidPoint(msp2,mp)
\tkzGetPoint{x10}
\tkzDefMidPoint(msp2,mp2)
\tkzGetPoint{x11}
\tkzDefMidPoint(msp2,sp1)
\tkzGetPoint{x12}

\tkzDefMidPoint(msp3,mp3)
\tkzGetPoint{x14}
\tkzDefMidPoint(msp3,sp1)
\tkzGetPoint{x13}
\tkzDrawPolygon[dgreen,thick](x1,x2,x3,x4,x5,x6,x7,x8,x9,x10,x11,x12,x13,x14)


\end{tikzpicture}
  \caption{Construction from \Cref{lem:beforePlanarCase}: $\bar K$ (yellow), $\frac{\bar K - \bar K}{2}$ (green). 
  }  
  \label{fig:two-step-trafo}
\end{figure}
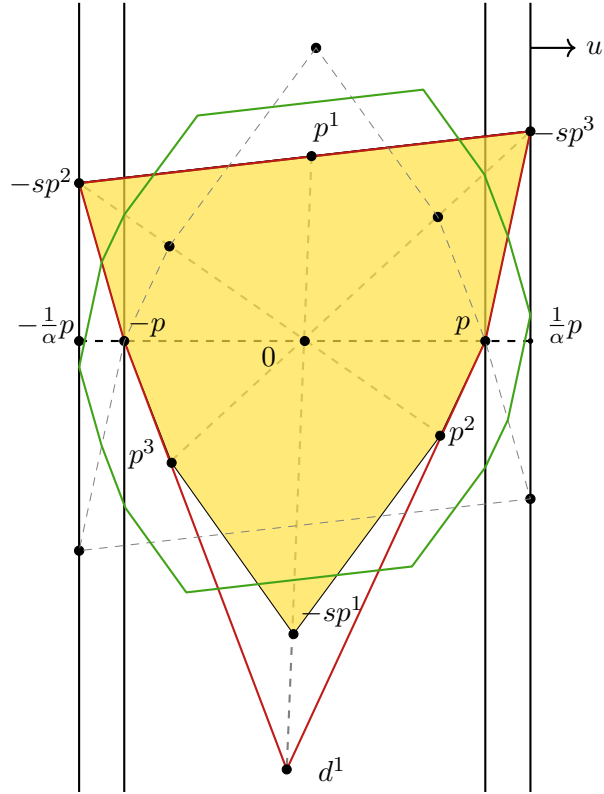

We assume w.l.o.g.~that $\aff(\{ sp^2,-sp^3\})= H^{=}_{\left(\smallmat{1 \\ 0},1\right)}$ is orthogonal to $[-p,p]$ and define $\alpha$ to be such that 
\[
\frac{1}{\alpha} p \in \bd( \conv(\bar K\cup(-\bar K))). 
\]

Since $p \in \bd(\bar K) \cap \bd(-\bar K)$ and $p \not \in \conv(\{\pm sp^1,\pm sp^2, \pm sp^3\}) = \bd \left( \conv(\bar K\cup(-\bar K))\right)$, holds $\alpha \leq 1$. Then $p=\smallmat{\alpha \\ 0}$ and following the notations from \Cref{lem:beforePlanarCase} we have
\[
\frac{1}{\alpha} p \in [sp^2,-sp^3]. 
\]


Hence, $sp^2 \in H^{=}_{\left(\smallmat{1 \\ 0},1\right)}$, $sp^3 \in H^{=}_{\left(\smallmat{-1 \\ 0},1\right)}$, and therefore the first coordinates of $p^2$ and $p^3$ are $\frac 1 s$ and $-\frac 1 s$, respectively. Thus, remembering that $p^2,p^3$ are located on the same side of $\aff\{-p,p\}$,
we may further assume that $p^2=\smallmat{1/s \\ -\nu}$ and $p^3=\smallmat{-1/s \\ -1}$ for some $\nu \in (0,1]$.

Now, we calculate the coordinates of $d^1$ as the intersection point of the lines $\aff\{p,p^2\}$ and $\aff\{-p,p^3\}$. We obtain that those coordinates satisfy the following system of equations: 

\begin{align*}
d^1_2 &= \frac{s \nu}{s \alpha-1} d^1_1 - \frac{s \alpha \nu}{s \alpha-1},\\ 
d^1_2 &= \frac{-s}{s\alpha-1} d^1_1 - \frac{s \alpha}{s\alpha-1}.
\end{align*}


Solving, 
gives us 

\[
d^1=\begin{pmatrix}
\frac{\alpha(\nu-1)}{\nu+1} \\ \frac{-2s \alpha \nu}{(s\alpha-1)(\nu+1)}  
\end{pmatrix},
\]

and for $\gamma$ such that $-\gamma d^1\in[-sp^2,-sp^3]$ holds 

\[
\frac{1}{\gamma}=\frac{2\alpha}{(\nu +1)^2}\left( \frac{2\nu }{s \alpha-1} - \frac{(\nu-1)^2}{2}\right) \geq 0. 
\]


Let $d^2$ denote the intersection point of $H^{=}_{\left(\smallmat{-1 \\ 0} , 1\right)}$ and $\aff(\set{-p,p^3})$. Hence,  $d^2=-p + \mu(-p-p^3) = \smallmat{-\alpha \\ 0} + \mu \smallmat{-\alpha+\frac 1 s \\ 1}$ for some $\mu>0$ and we directly see that $d^2_2 = \mu$. Now, since we have $d^2_1=-1$ we obtain
$d^2_2 = \frac{s(1-\alpha)}{s\alpha-1}$. Then, we have
\begin{equation*}
    d^2= \begin{pmatrix}
        -1 \\ \frac{s(1-\alpha)}{s\alpha -1}
    \end{pmatrix}. 
\end{equation*}

Similarly, let $d^3$ denote the intersection point of $H^{\le}_{\left(\smallmat{1 \\ 0} , 1\right)}$ and $\aff(\set{p,p^2})$. Hence,  $d^3=p + \mu(p-p^2) = \smallmat{\alpha \\ 0} + \mu \smallmat{\alpha-\frac 1 s \\ a}$ for some $\mu>0$. Now, since we have $d^3_1=1$ we obtain

$d^3_2 = \frac{s\nu (1-\alpha)}{s\alpha-1}$.

Then, we have

\begin{equation*}
    d^3= \begin{pmatrix}
        1 \\ \frac{s \nu(1-\alpha)}{s\alpha -1}
    \end{pmatrix}. 
\end{equation*}



Since $\nu\leq 1$, we also have 
\[
\tau^{-1}p= \begin{pmatrix}
    \tau^{-1}\alpha \\ 0
\end{pmatrix} \in \left[\frac{sp^2-sp^3}{2}, \frac{sp^2+p}{2}\right]=\left[ \begin{pmatrix}
    1 \\ \frac{s(1-\nu)}{2}
\end{pmatrix}, \begin{pmatrix}
    \frac{1+\alpha}{2} \\ \frac{-s \nu}{2}
\end{pmatrix}\right]. 
\]
Thus, 
\begin{align*}
    2\tau^{-1}\alpha &= 2\lambda +(1-\lambda)(1+\alpha),\\
    0&=\lambda s(1-\nu) +(1-\lambda)(-s \nu)
\end{align*}
for some $\lambda\in[0,1]$.

From the second equation $\lambda=\nu$, and inserting this into the first, we have
\begin{align}\label{eq:alpha_tau}
    &2\tau^{-1}\alpha = 2\nu +(1-\nu)(1+\alpha) \nonumber \\
    \iff \quad & \alpha= \frac{1+\nu}{2\tau^{-1}+\nu-1}. 
\end{align}
Note that $2\tau^{-1}+\nu-1 >0$ and $\alpha \geq 0$.


Hence,
\[
\frac{1}{\gamma}=\frac{2}{(\nu+1)(2 \tau^{-1} +\nu-1)}\left( \frac{2 \nu}{s \left(\frac{\nu+1}{2 \tau^{-1} +\nu-1}  \right)-1} - \frac{(\nu-1)^2}{2}\right).
\]




Observe that from the convexity of $\bar K$ 
we directly obtain 
\begin{align}
    -sp^1 &\in\left[0,d^1\right], \\
    d^2 &\in\left[-\frac{1}{\alpha}p,-sp^2\right], \quad \text{and}\\
    d^3 &\in\left[\frac{1}{\alpha}p,-sp^3\right] \label{eq:d3},  
\end{align}
which is equivalent to 

\begin{align*}
    s  \gamma &\leq 1, \\
    s \nu &\geq \frac{s(1-\alpha)}{s\alpha-1}, \quad \text{and}\\
    s &\geq \frac{s \nu(1-\alpha)}{s\alpha-1}.    
\end{align*}


Applying \eqref{eq:alpha_tau}, we see that $s \geq \frac{s\nu(1-\alpha)}{s\alpha-1}$ becomes equivalent to 
$\tau \geq \frac{2}{s+1}$, which by \cite[Theorem 1.7]{BDG1} is always fulfilled, and thus \eqref{eq:d3} is always fulfilled and the system of inequalities form above can be rewritten as 
\begin{equation}\label{eq:condition}
    \frac{2\tau^{-1}+\nu-2}{\nu} \leq s \leq \frac{2}{(\nu+1)(2 \tau^{-1} +\nu-1)}\left( \frac{2 \nu}{s \left(\frac{\nu+1}{2 \tau^{-1} +\nu-1}  \right)-1} - \frac{(\nu-1)^2}{2}\right),
\end{equation}
where the right hand side follows from the first and the left hand side from the second inequality. 



We want to characterize the situation, in which $s$ becomes maximal, under this condition. 


\
The right-hand side inequality of \eqref{eq:condition} can be transformed into
\[
s^2-\frac{1}{\nu+1} \left(
\frac{4 \tau^{-1}(\tau^{-1} +\nu-1)}{2\tau^{-1} +\nu-1}
\right)s-1 \le 0.
\]


Thus, we are interested in the bigger root of the quadratic polynomial, i.e., in 
\begin{equation}
   \label{eq:s_taua} 
s=\frac{\frac{4 \tau^{-1}(\tau^{-1} +\nu-1)}{(\nu+1)(2\tau^{-1} +\nu-1)}+\sqrt{\left(\frac{4 \tau^{-1}(\tau^{-1} +\nu-1)}{(\nu+1)(2\tau^{-1} +\nu-1)}\right)^2+4}}{2}=:s(\tau, \nu). 
\end{equation}

Let $\nu^+(\tau):= 1-\tau^{-1}+\sqrt{\tau^{-1}(2-\tau^{-1})}$ and $B(\tau,\nu):= \frac{4 \tau^{-1}(\tau^{-1} +\nu-1)}{(\nu+1)(2\tau^{-1} +\nu-1)}$. 

Since 
\begin{equation}
 \frac{d}{dB} \frac{B+\sqrt{B^2+4}}{2}= \frac{1}{2}\left(1+ \frac{B}{\sqrt{B^2+4}}\right)>0,
\end{equation}

we see that if $B(\tau,\nu))$ is decreasing (increasing) in $\nu$, then $s(\tau,\nu)$ is decreasing (increasing) in $\nu$, respectively. Now,
\begin{align*}
    \frac{d}{d\nu} B(\tau,\nu)&= \frac{d}{d\nu} \frac{4 \tau^{-1}(\tau^{-1} +\nu-1)}{(\nu+1)(2\tau^{-1} +\nu-1)} \\
&= \frac{-4\tau^{-1}(\nu^2+\nu(2\tau^{-1}-2)+2\tau^{-2}-4\tau^{-1}+1}{(\nu+1)^2(2\tau^{-1}+\nu-1)^2}\\
&=\frac{-4\tau^{-1}(\nu- (1-\tau^{-1}-\sqrt{\tau^{-1}(2-\tau^{-1})}))(\nu- (1-\tau^{-1}+\sqrt{\tau^{-1}(2-\tau^{-1})}))}{(\nu+1)^2(2\tau^{-1}+\nu-1)^2}
\end{align*}

and since $1-\tau^{-1}-\sqrt{\tau^{-1}(2-\tau^{-1})}\leq 0$ and $1-\tau^{-1}+\sqrt{\tau^{-1}(2-\tau^{-1})}\in[0,1]$ for $\tau\in[\tfrac{2}{3},1]$, $s(\tau,\nu)$ is increasing in $(0,\nu^+(\tau)]$, decreasing in $[\nu^+(\tau),1]$ and attains its maximum at $\nu^+(\tau)$. 

Inserting \eqref{eq:s_taua} into the left-hand-side inequality of \eqref{eq:condition}, yields to 
\begin{equation}\label{eq:tau_new0}
    \nu \geq \sqrt{(\tau^{-1}-1)(2\tau^{-1}-1)} =:\nu^*(\tau).
\end{equation}
Thus, in the case when $\nu^*(\tau)> \nu^+(\tau)$, the maximal asymmetry we can reach is $s(\tau,\nu^*(\tau))$, otherwise it is $s(\tau,\nu^+(\tau))$.

Thus, combining those two facts we see that 
the maximal asymmetry $s$ we can reach in dependence of $\tau$ is 
\begin{equation}
  s_{\max}(\tau):= s\left(\tau, \max\set{\nu^*(\tau),\nu^+(\tau)}\right). 
\end{equation}
Let $\hat \tau $ be such that  $\nu^*(\hat \tau)=\nu^+(\hat \tau)$ and $\hat s := s(\hat \tau, \nu^+(\hat \tau))$. Note that $\hat \tau \approx 0.78$ and $\hat s \approx 1.85$.

Observe that in case, when $\tau \in [\hat \tau, 1]$, we have $s(\tau,\nu(\tau))\leq \hat s$, i.e., $ s_{\max}(\tau)=s\left(\tau, \nu^+(\tau) \right)$ and thus, 
\begin{align}\label{eq:tau_new2}
    \tau &\leq \frac{(s^2+1)^2}{(s^2-1)\left(s^2+2s-1 +2\sqrt{s(s^2-1)}\right)}, 
\end{align}
while in case, when $\tau \in \left[\frac23, \hat \tau\right]$, we have $s(\tau,\nu(\tau))\geq \hat s$, i.e., $ s_{\max}(\tau)=s\left(\tau, \nu^*(\tau) \right)$, and thus, 
\begin{equation}\label{eq:tau_new1}
    \tau \leq \frac{2(s^2-2s-1)}{(s-3)(s+1)}.
\end{equation}
\end{proof}

In the following we deal with the equality case of \Cref{lem:PlanarCase}. 
\begin{lemma}\label{lem:PlanarCase_equality}
Let $K\in\mathcal K^2$ be Minkowski centered such that $s(K)>\varphi$ and $\tau(K)=c(s(K))$. 
Then, following the notations of \Cref{lem:PlanarCase}, 
\begin{enumerate}[(i)]
  \item $\conv(\{ -s( K)p^2, -s( K)p^3, p, d^1, -p \}) \subset K \subset \conv(\{ -s( K)p^2, -s( K)p^3, d^3, d^1, d^2\})$, when $\varphi < s(K) \leq \hat s$, 
  \item $\conv(\{ -s( K)p^2, -s( K)p^3, p, d^1\}) \subset K \subset \conv(\{ -s( K)p^2, -s( K)p^3, d^3, d^1\})$, when $\hat s < s( K) < 2$, and 
  \item $K= \conv(\{ -s( K)p^1, -s( K)p^2, -s( K)p^3 \})$, when $s( K) = 2$.
\end{enumerate}
\end{lemma}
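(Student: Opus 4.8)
The plan is to collapse the chain $\tau(K)\le\tau(\bar K)\le c(s(K))$ supplied by Lemmas~\ref{lem:beforePlanarCase} and \ref{lem:PlanarCase}: since $\tau(K)=c(s(K))$, every inequality in this chain must be an equality, so in particular $\tau(\bar K)=c(s(\bar K))$. It then suffices to read off, from the optimisation performed in the proof of Lemma~\ref{lem:PlanarCase}, exactly which of the constraints that were used there to bound the asymmetry must be tight, and to translate those tightness conditions back into the shape of $\bar K$ (and hence of $K$).

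Concretely, I would revisit the end of the proof of Lemma~\ref{lem:PlanarCase}, where the maximal asymmetry for a fixed $\tau$ is $s_{\max}(\tau)=s(\tau,\max\{\nu^*(\tau),\nu^+(\tau)\})$. This maximum is attained only when the right-hand inequality of \eqref{eq:condition} is an equality — equivalently $s\gamma=1$, i.e.\ $-s(K)p^1=d^1$ — and, in addition, when $\tau\le\hat\tau$ (the range $\hat s<s(K)\le 2$), the left-hand inequality of \eqref{eq:condition} is an equality as well — equivalently $s\nu=\frac{s(1-\alpha)}{s\alpha-1}$, i.e.\ $-s(K)p^2=d^2$. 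Translating these degeneracies of $\bar K=\conv(\{\pm p,p^2,p^3,\gamma s\,d^1,-s(K)p^2,-s(K)p^3\})$: since by definition $d^1\in\aff(\{p,p^2\})\cap\aff(\{-p,p^3\})$, the equality $-s(K)p^1=d^1$ forces $p^2\in[p,d^1]$ and $p^3\in[-p,d^1]$, so $p^2,p^3$ drop out as vertices and $\bar K=\conv(\{-s(K)p^2,-s(K)p^3,p,d^1,-p\})$; if moreover $-s(K)p^2=d^2\in\aff(\{-p,p^3\})$, then $-p\in[-s(K)p^2,d^1]$ drops out as well and $\bar K=\conv(\{-s(K)p^2,-s(K)p^3,p,d^1\})$. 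For $s(K)=2$ one instead uses the classical fact that $s(K)=n$ forces $K$ to be a simplex: then $K$ is a triangle, its Minkowski centre is its centroid, $-\tfrac1{s(K)}K$ is the medial triangle, the asymmetry points $p^i$ are the edge midpoints, and $-s(K)p^i$ are the vertices, so $K=\conv(\{-s(K)p^1,-s(K)p^2,-s(K)p^3\})$ with no further hypothesis needed.

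It remains to obtain the two-sided containment for $K$ itself in cases (i) and (ii). For the inner containment I would simply check that every named vertex already lies in $K$: the points $-s(K)p^i$ and $d^1=-s(K)p^1$ are images of asymmetry points, and $\pm p\in\bd(K)\cap\bd(-K)$, once one knows $\bar K\subseteq K$ — which follows by retracing the reduction of Lemma~\ref{lem:beforePlanarCase} and noting that in the equality case every step preserves $\tau$ exactly, so that the monotonicity estimates $\rho(v)\ge\rho(p)$ and $\rho_i(z^{i,j})\le\rho(z^{i,j})$ established there pin the intermediate bodies down enough to be containment-preserving. For the outer containment I would exhibit supporting lines of $K$: the lines $\aff(\{p,p^2\})$ and $\aff(\{-p,p^3\})$ support $K$ at the asymmetry points $p^2,p^3$ (this is where the optimal-containment touching conditions of Proposition~\ref{prop:Opt_Containment} and Corollary~\ref{cor:zero-inside}, together with the extremality of $\tau$, enter), while the two lines through $\pm s(K)p^2$ and $\mp s(K)p^3$ support $\conv(K\cup(-K))=\conv(\{\pm s(K)p^i\})$. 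The intersection of the corresponding half-planes is exactly $\conv(\{-s(K)p^2,-s(K)p^3,d^3,d^1,d^2\})$, with $d^3=\aff(\{p,p^2\})\cap H^{=}_{\left(\smallmat{1\\0},1\right)}$, $d^2=\aff(\{-p,p^3\})\cap H^{=}_{\left(\smallmat{-1\\0},1\right)}$ and $d^1=\aff(\{p,p^2\})\cap\aff(\{-p,p^3\})$; in case (ii) one has $d^2=-s(K)p^2$, so this pentagon degenerates to the quadrilateral $\conv(\{-s(K)p^2,-s(K)p^3,d^3,d^1\})$. For $s(K)=2$ the inner and outer polygons both collapse to the same triangle, giving the equality.

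I expect the outer containment to be the main obstacle: one must rule out any outward bulge of $\bd(K)$ past the segments $[p,p^2]$, $[p^3,-p]$ or past the two vertical supporting lines, i.e.\ prove a rigidity statement to the effect that such a bulge would force $\tau(K)<c(s(K))$ while keeping $s(K)$ fixed. I would obtain this by extracting the equality cases of the distance inequalities proved in Lemma~\ref{lem:beforePlanarCase}, concluding that $K$ can differ from $\bar K$ only on the two edges incident to $p$, and there only by extending out along the lines $\aff(\{p,p^2\})$ and $\aff(\{-p,p^3\})$ as far as the bounding verticals permit — that is, at most to $d^3$ and $d^2$.
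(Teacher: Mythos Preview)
Your approach is correct and mirrors the paper's: collapse $\tau(K)\le\tau(\bar K)\le c(s(K))$ to equalities, read off which constraints in the optimisation of Lemma~\ref{lem:PlanarCase} must be tight (the right-hand side of \eqref{eq:condition} always, the left-hand side additionally when $s>\hat s$), deduce the shape of $\bar K$, and then trace back through Lemma~\ref{lem:beforePlanarCase} to pin down $K$. The paper carries out that back-tracing explicitly, showing that in the equality case none of Steps~1--4 of Lemma~\ref{lem:beforePlanarCase}(b) can alter anything (else $\tau$ would strictly change), so $K'=K'_1=\cdots=K'_4=\bar K$, and then describes the residual freedom in passing from $K$ to $K'=\conv((K\cap(-K))\cup\{-sp^i\})$; your supporting-line framing for the outer containment is an equivalent way to package the same conclusion.

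One caution: the claim that $\aff(\{p,p^2\})$ supports $K$ at $p^2$ does \emph{not} follow from Proposition~\ref{prop:Opt_Containment} or Corollary~\ref{cor:zero-inside} --- those guarantee \emph{some} supporting line of $K$ at each asymmetry point, not this particular one. What actually forces it is exactly the equality-case analysis you describe in your final paragraph: once $K'=\bar K$, the arc of $\bd(K)=\bd(K\cap(-K))$ between $p$ and $z^{1,2}$ lies in $\bar K$, hence on or inside the edge $[p,d^1]$; since it passes through the collinear points $p,p^2,z^{1,2}$, convexity pins it to the segment, and then convexity again prevents $\bd(K)$ from bulging past $\aff(\{p,d^1\})$ between $z^{1,2}$ and $d^1$. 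So drop the reference to the touching-condition propositions and route the argument through $K'=\bar K$ plus convexity, as you already anticipate. (Also, the freedom you identify at the end lies on the edges $[p,-sp^3]$ and $[-p,-sp^2]$, i.e.\ in the cones containing $-sp^3$ and $-sp^2$ --- not ``two edges incident to $p$''.)
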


\begin{proof} Let $K\in\mathcal K^2$ be Minkowski centered with $s(K)>\varphi$ and define $\bar K$ as in \Cref{lem:beforePlanarCase} with $\tau(\bar K) = c(s(\bar K))$. By \Cref{lem:PlanarCase} we have the following: 

For $\varphi <s(\bar K) \leq \hat s$ we have $\tau(\bar K)= c(s(\bar K))= \frac{(s(\bar K)^2+1)^2}{(s(\bar K)^2-1)\left(s(\bar K)^2+2s(\bar K)-1 +2\sqrt{s(\bar K)(s(\bar K)^2-1)}\right)}$, which implies equality in the right-hand-side inequality of \eqref{eq:condition}, i.e., $d^1=-s(\bar K)p^1$, and 
\[ 
\bar K= \conv(\{ -s(\bar K)p^2, -s(\bar K)p^3, p, d^1, -p \}). 
\]

For $\hat s < s(\bar K) < 2$ we have $\tau(\bar K)= c(s(\bar K)= \frac{2(s(\bar K)^2-2s(\bar K)-1)}{(s(\bar K)-3)(s(\bar K)+1)} $.
Observe that this implies equality in \eqref{eq:tau_new0}, and thus also in both inequalities of \eqref{eq:condition}, i.e., $d^1=-s(\bar K)p^1, d^2 =-s(\bar K)p^2$, and therefore, $\bar K= \conv(\{ -s(\bar K)p^2, -s(\bar K)p^3, p, d^1\})$. 

For $s(\bar K)=2$, in addition to $d^1=-s(\bar K)p^1, d^2 =-s(\bar K)p^2$, we also have $d^3= -s(\bar K)p^3$ and $\bar K$ becomes $\conv(\{ -s(\bar K)p^1, -s(\bar K)p^2, -s(\bar K)p^3\})$. 
 
Thus, we have 
\begin{enumerate}[(i)]
  \item $\bar K= \conv(\{ -s(\bar K)p^2, -s(\bar K)p^3, p, d^1, -p \})$, when $\varphi < s(\bar K) \leq \hat s$, 
  \item  $\bar K= \conv(\{ -s(\bar K)p^2, -s(\bar K)p^3, p, d^1\})$, when $\hat s < s(\bar K) < 2$, and 
  \item $\bar K= \conv(\{ -s(\bar K)p^2, -s(\bar K)p^3, d^1 \})$, when $s(\bar K) = 2$.
\end{enumerate}

Next, we show that the only Minkowski centered convex body $K'$ of the form \eqref{eq:K'} with $s(K') > \varphi$ and $\tau(K')=c(s(K'))$ are still the same as in (i)-(iii) from above. 

Notice that before Step 4 of \Cref{lem:beforePlanarCase}b), both segments $[-sp^2,-p]$ and $[-sp^3,p]$ already belonged to $\bd(K'_3)$ and 
$p^2 \in [p, -sp^1]$, $p^3 \in [-p, -sp^1]$,
implying $[-p, -sp^1], [p, -sp^1] \subset \bd(K'_3)$. Since we also had $p^1 \in [-sp^2, -sp^3]$, we conclude that before Step 4 $K'_3$ must also be as in (i)-(iii). For the same reasoning, we conclude that before Step 3 $K'_2$ could not be different. 

If before Step 2 $K'_1$ would be different from (i)-(iii), this would mean that at least one of $-sp^3$, $-sp^2$ was modified during this step. This would imply 
\[
\tau^{-1}(K'_2)=\rho_2(p)< \rho_1(p)=\tau^{-1}(K'_1), 
\]
a contradiction. Thus, we conclude that before Step 2 $K'_1$ must have been be as in (i)-(iii).

Now, if before Step 1 $K'$ would be different from those in (i)-(iii), before this step at least one of $-sp^i$, $i=1,2,3$ must have been equal to $-\gamma_i^{-1} sp^i$ with $\gamma_i <1$. 

Observe that since $p^2 \in [-p, -sp^1]$, $p^3 \in [p, -sp^1]$ and $p^1 \in [-sp^2, -sp^3]$, this could be only possible if all three points $-sp^i$, $i=1,2,3$ have been scaled during this step. But in this case, $\tau(K') <\tau(K'_1)$, implying that even before Step 1 $K'$ must have been as in (i)-(iii).

We check the shape of $K$ before one applies transformation \eqref{eq:K'}. Note that $s(K)=s(\bar K)$. 


If $\varphi < s(K) \leq \hat s$, the only possible freedom we have in choosing the original set $K$ is to replace the linear boundary $[p,-sp^3]$, s.t.~$\bd(K) \cap \pos(\{-sp^3,p\}) \subset \conv(\{-sp^3,p, d^3\})$. 

If $\hat s < s(K) <  s$, possible freedom we have in choosing the original set $K$ is to replace the linear boundaries $[p,-sp^3]$ and $[-p-sp^2]$, s.t.~$\bd(K) \cap \pos(\{-sp^3,p\}) \subset \conv(\{-sp^3,p, d^3\})$ and $\bd(K) \cap \pos(\{-sp^2,-p\}) \subset \conv(\{-sp^2,-p, d^2\})$, respectively.


Obviously, for $s(K)=2$, $K$ must be a triangle and there is no freedom left. 
\end{proof} 

Now, we are ready to prove \Cref{thm:PlanarCase}. 

\begin{proof}[Proof of \Cref{thm:PlanarCase}]
In \cite[Theorem 1.7]{BDG1} it is shown that $\frac{2}{s(K)+1} \leq \tau(K) \leq 1$. 

Combining this with \Cref{lem:PlanarCase}, we obtain 
\[
\frac{2}{s(K)+1} \leq \tau(K) \leq c(s(K)).
\]

Now we show that for any $s\in[1,2]$ and $\tau\in\left[\frac{2}{s+1},c(s)\right]$, where 
\[ 
c(s) :=
    \begin{dcases}
     1 & s \leq \varphi, \\
    \frac{(s^2+1)^2}{(s^2-1)\left(s^2+2s-1 +2\sqrt{s(s^2-1)}\right)} & \varphi <s \leq \hat s, \\
    \frac{2(s^2-2s-1)}{(s-3)(s+1)}  & \hat s < s \leq 2.   \\
    \end{dcases}
\]
there exists $K_{s,\tau}\in\mathcal K^2$, such that $s(K_{s,\tau})=s$ and $\tau(K_{s,\tau})=\tau$. 
In order to simplify the proof we split the region of possible $(s, \tau)$ into two parts: the first one being $s\in[1,2]$ and $\tau\in\left[\frac{2}{s+1},\min\set{1,\frac{s}{s^2-1}}\right]$, while the second being $s \in [\varphi,2]$ and $\tau\in\left[\frac{s}{s^2-1}, c(s)\right]$.  

\textbf{Step 1:}
First, we show that for any $s\in[1,2]$ and $\tau\in\left[\frac{2}{s+1},\min\set{1,\frac{s}{s^2-1}}\right]$ 
there exists $K_{s,\tau}\in\mathcal K^2$, such that $s(K_{s,\tau})=s$ and $\tau(K_{s,\tau})=\tau$. Let $S=\conv\left(\set{p^1,p^2,p^3}\right)$ be a regular Minkowski centered triangle and $K_s:= S \cap (-sS)$, $s\in[1,2]$. By \cite[Remark 4.1]{BDG1}, $K_s$ is Minkowski centered, $s(K_s)=s$ and $\tau(K_s)=\alpha(K_s)=\frac{2}{s+1}$. Let $q^i$, $i=2,3$, be the vertices of $K_s$, that are the intersection points of the edges of $K_s$ with the normal vectors $\frac{s}{2}p^i$ and $-\frac{1}{2}p^1$, respectively. We define a continuous transformation $f:\{K_s: s\in[1,2]\} \times [0,1] \rightarrow \mathcal K^2$ with $s(f(K_s,t)=s$ for all $t\in[0,1]$, while $f(K_s,0)=K_s$ and $\tau(f(K_s,1))=\min\set{1,\frac{s}{s^2-1}}$. 

For $t\in [0,\frac{1}{2}]$, we continuously rotate the supporting lines that contain $\frac{s}{2}p^i$ around $q^i$, $i=2,3$, respectively, until the new edges are orthogonal to the edge containing $-\frac{1}{2}p^1$. 

For $t\in [\frac{1}{2},1]$, we rotate the lines that contain the edges of $K_s$, that contain $-\frac{1}{s}q^i$, $i=1,2$, around those points, respectively, until they intersect in $\frac{s}{2}p^1$. 

\begin{figure}
    \centering
    \begin{subfigure}{0.3\textwidth}
    \centering
    \begin{tikzpicture}[scale=2.5]

    \draw [thick, gray] (0,1)--(0.87,-0.5)--(-0.87,-0.5)--(0,1); 
    \draw [thick, gray, rotate around={180: (0,-0.01)}] (0,1)--(0.87,-0.5)--(-0.87,-0.5)--(0,1); 
   
    \draw [fill] (0,-0.01) circle [radius=0.01];

    \draw [thick, gray, dashed] (-0.6348,-0.5) -- (0.405,0.3);
    \draw [thick, gray, dashed] (0.6348,-0.5) -- (-0.405,0.3);
    \draw [thick, gray, dashed] (0,-0.5) -- (0,0.8);

    \draw [thick,  fill=lgold, fill opacity=0.7] (-0.11547,0.8) -- (0.11547,0.8) -- (0.75038,-0.2997)-- (0.6348,-0.5)-- (-0.6348,-0.5)-- (-0.75038,-0.2997)--(-0.11547,0.8);
    
    \draw [fill] (0.64,-0.5) circle [radius=0.02];
    \draw [fill] (-0.64,-0.5) circle [radius=0.02];

    \draw [fill, dred] (0,-0.5) circle [radius=0.02]; 
    \draw [fill, dred] (0.405,0.3) circle [radius=0.02]; 
    \draw [fill, dred] (-0.405,0.3) circle [radius=0.02]; 
    
    \draw (-0.07,-0.07) node {$0$};
    \draw (0,1.1) node {$p^1$};
    \draw (0.9,-0.65) node {$p^2$};
    \draw (-1,-0.55) node {$p^3$};
    \draw (0.6,-0.65) node {$q^2$};
    \draw (-0.6,-0.65) node {$q^3$};
    \draw (0,-0.65) node {$-\frac 12 p^1$};
    \end{tikzpicture}
    \end{subfigure}
    \hfill
    \begin{subfigure}{0.3\textwidth}
    \centering
    \begin{tikzpicture}[scale=2.5]
    \draw [thick, gray] (0,1)--(0.87,-0.5)--(-0.87,-0.5)--(0,1); 
    \draw[thick, gray, rotate around={180: (0,-0.01)}] (0,1)--(0.87,-0.5)--(-0.87,-0.5)--(0,1); 
   
    \draw [fill] (0,-0.01) circle [radius=0.01];

    \draw [thick, gray, dashed] (-0.6348,-0.5) -- (0.405,0.3);
    \draw [thick, gray, dashed] (0.6348,-0.5) -- (-0.405,0.3);
    \draw [thick, gray, dashed] (0,-0.5) -- (0,0.8);
        
    \draw [thick, fill=lgold, fill opacity=0.7] (-0.11547,0.8) -- (0.11547,0.8) -- (0.6348,-0.1)-- (0.6348,-0.5)-- (-0.6348,-0.5)-- (-0.6348,-0.1)--(-0.11547,0.8);
    \draw (-0.07,-0.07) node {$0$};

    \draw [fill, dred] (0,-0.5) circle [radius=0.02]; 
    \draw [fill, dred] (0.405,0.3) circle [radius=0.02]; 
    \draw [fill, dred] (-0.405,0.3) circle [radius=0.02]; 

    \draw (0.59,0.35) node {$-\frac 1s q^3$};
    \draw (-0.57,0.35) node {$-\frac 1s q^2$};
    \end{tikzpicture}
    \end{subfigure}
    \hfill
    \begin{subfigure}{0.3\textwidth}
    \centering
    \begin{tikzpicture}[scale=2.5]
    \draw [thick, gray] (0,1)--(0.87,-0.5)--(-0.87,-0.5)--(0,1); 
    \draw [thick, gray, rotate around={180: (0,-0.01)}] (0,1)--(0.87,-0.5)--(-0.87,-0.5)--(0,1);

    \draw [thick, gray, dashed] (-0.6348,-0.5) -- (0.405,0.3);
    \draw [thick, gray, dashed] (0.6348,-0.5) -- (-0.405,0.3);
    \draw [thick, gray, dashed] (0,-0.5) -- (0,0.8);

    \draw [fill] (0,-0.01) circle [radius=0.01];

    \draw [thick, fill=lgold, fill opacity=0.7] (0,0.8) -- (0.6348,0.03)-- (0.6348,-0.5)-- (-0.6348,-0.5)-- (-0.6348,0.03)--(0,0.8);

    \draw [fill, dred] (0,-0.5) circle [radius=0.02]; 
    \draw [fill, dred] (0.405,0.3) circle [radius=0.02]; 
    \draw [fill, dred] (-0.405,0.3) circle [radius=0.02]; 
    \end{tikzpicture}
    \end{subfigure}
    \caption{Transformation within the proof of \Cref{thm:PlanarCase}: regular Minkowski centered triangle $S$ and $-S$ (gray), $f(K_s,1)=K_s = S \cap (-s S)$ before the transformations (Subfigure 6.1), the transformed set $f(K_s,\frac{1}{2})$ (Subfigure 6.2), the transformed set $f(K_s,1)$ (Subfigure 6.3), the asymmetry points $-\frac 12 p^1, -\frac 1s q^2, -\frac 1s q^3$ (big black dots) of $f(K_s,t)$, $t\in[0,1]$. 
}
\label{fig:ScapsS}
\end{figure}
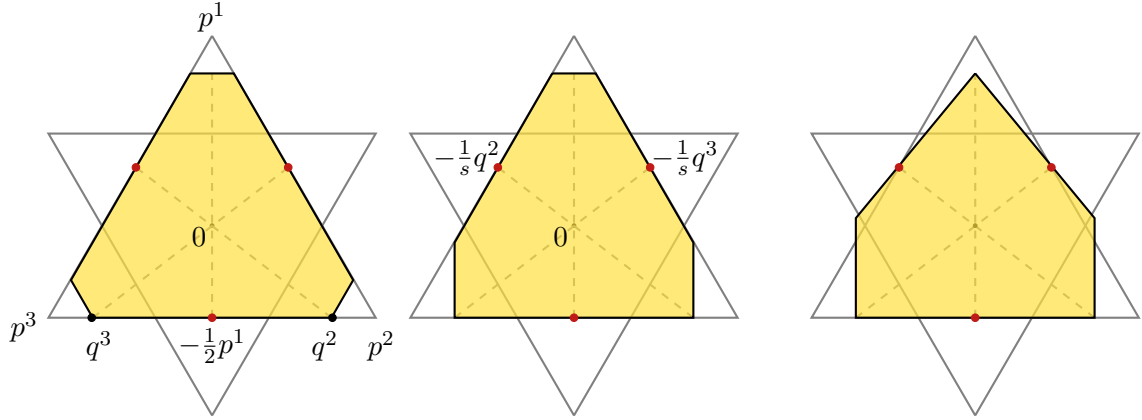

It is easy to verify that at every point of this transformation, $-\frac{1}{s}K_s\optc K_s$, with $-\frac{1}{2}p^1,-\frac{s}{2}q^2,-\frac{s}{2}q^2$ being a triple of well-spread asymmetry points. Hence, $s(f(K_s,t))=s$ for every $t\in[0,1]$ and $s\in[1,2]$. 

Now consider $f(K_s,1)$. One can compute
\begin{equation*}
    f(K_s,1)= \conv\left( \set{\frac{s}{2}p^1, q^2,q^3, \begin{smallpmatrix}
        \frac{1}{2\sqrt{3}}(2s-1) \\ \frac{-s^2+s+1}{2}
    \end{smallpmatrix},\begin{smallpmatrix}
        -\frac{1}{2\sqrt{3}}(2s-1) \\ \frac{-s^2+s+1}{2}
    \end{smallpmatrix}} \right).
\end{equation*}
Thus, for $s\in[0,\varphi]$, $H^=_{\left(\smallmat{\pm 1 \\ 0} , 1\right)}$ supports $f(K_s,1)$ at $\begin{smallpmatrix}
        \pm\frac{1}{2\sqrt{3}}(2s-1) \\ 0
\end{smallpmatrix}$, respectively. Therefore, by \cite[Theorem 3]{BDG}, $\tau(f(K_s,1))=1$, while for $s\in(\varphi,2]$, the body $f(K_s,1)$ is of the form $K'_4$ from the proof of \Cref{lem:beforePlanarCase} and $\tau$ is attained in the horizontal direction with $\tau(f(K_s,1))=\frac{s}{s^2-1}$. 

Since the transformation is continuous, we conclude that 
\begin{equation*}
   \set{\tau(f(K_s,t)):t\in [0,1]}=\left[\frac{2}{s+1},\min \set{1,\frac{s}{s^2-1}}\right] 
\end{equation*}

for every $s\in[1,2]$, as desired. 

\textbf{Step 2:}
For the second part we show that for every $s \in [\varphi,2]$ and $\tau\in\left[\frac{s}{s^2-1}, c(s)\right]$ 
there exists a Minkowski centered convex body $K$ with $s(K)=s$, $\tau(K)=\tau$. Since $\frac{2}{s(K)+1} \leq \tau(K) \leq 1$ and $s \in [\varphi,2]$, we have $\tau\in\left[\frac{2}{3},1\right]$. Moreover, $\tau=\frac{s}{s^2-1}$ rewrites as $s=\frac{1}{2\tau}+\sqrt{1+\frac{1}{4\tau^2}}$. Thus, the statement above is equivalent to showing that for every $\tau\in\left[\frac{2}{3},1\right]$ and $s\in\left[\frac{1}{2\tau}+\sqrt{1+\frac{1}{4\tau^2}} , s_{\max}(\tau)\right]= \left[\frac{1}{2\tau}+\sqrt{1+\frac{1}{4\tau^2}} , s\left(\tau, \max\set{\nu^*(\tau),\nu^+(\tau)}\right) \right]$ there exists a Minkowski centered convex body $K$ with $s(K)=s$ and $\tau(K)=\tau$. 

Here we use the notations introduced in the proof of \Cref{lem:PlanarCase}. Consider for every $\nu\in[\max\set{\nu^+(\tau),\nu^*(\tau)},1]$ 
\begin{equation}
    K(\tau,\nu):= \conv\left(\set{-s(\tau,a)p^1,-s(\tau,\nu)p^3,p,-p,d^1}\right). 
\end{equation}
We know by construction that $K(\tau,a)$ is Minkowski centered with asymmetry $s(\tau,a)$ and $\tau(K(\tau,\nu))=\tau$. 
Since 
\begin{equation*}
    s\left(\tau, \left[\max\set{\nu^+(\tau),\nu^*(\tau)},1\right]\right)=\left[\frac{1}{2\tau}+\sqrt{1+\frac{1}{4\tau^2}},s_{\max}(\tau)\right], 
\end{equation*}
we have shown that there exist bodies with the desired properties for every $\tau\in\left[\frac{2}{3},1\right]$. 
\end{proof}

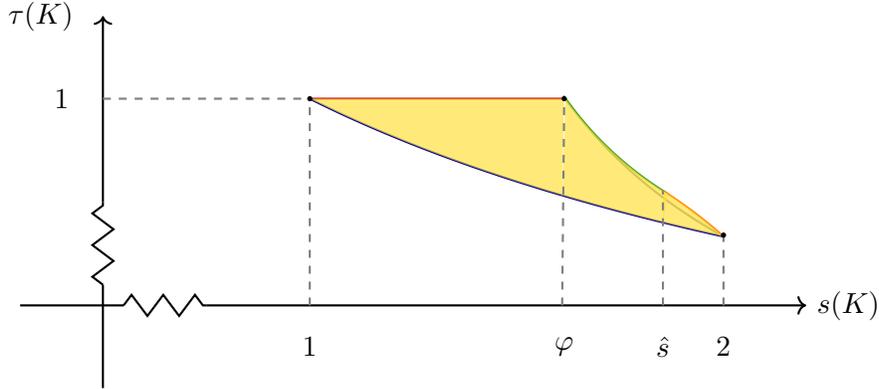
\begin{figure}[ht]
  \begin{tikzpicture}[scale=5.5, 
  declare function={aplus(\t)= 1-\t +sqrt(\t*(2-\t)); 
                   astar(\t)=sqrt(2*(\t)^2-3*\t+1);
                   btaua(\a,\t)=(4*(\t)*(\t+\a-1))/((\a+1)*(2*(\t)+\a-1));
                   smax(\b)= 0.5*(\b+sqrt((\b)^2+4));
                   stau1(\y)=smax(btaua(aplus(1/(\y)),1/(\y)));
                   stau2(\y)=smax(btaua(astar(1/\y),1/\y)));
                  slow(\y)=2/\y -1;
                 },
  ]
    \draw[thick, discont] (0.05,0) -- (0.25,0);
    \draw[thick, discont] (0,0.05) -- (0,0.25);
    \draw [thick] (-0.2,0) -- (0.05,0);
    \draw [thick] (0,-0.2) -- (0,0.05);
    \draw[->] [thick] (0.25, 0) -- (1.7, 0) node[right] {$s(K)$};
    \draw[->] [thick] (0, 0.25) -- (0, 0.7);

    \draw [name path=F4,thick, dred,shift={(-0.5,-0.5)}] (1,1) -- (1.615,1);
    \draw[name path=F1, thick, dblue, domain=1:1.61, smooth, variable=\x, dblue,shift={(-0.5,-0.5)}]  plot ({\x}, {2/(\x+1)});
    \draw[name path=F2,thick, dblue, domain=1.61:1.85364, smooth, variable=\x, dblue,shift={(-0.5,-0.5)}]  plot ({\x}, {2/(\x+1)});
    \draw[name path=F3,thick, dblue, domain=1.85364:2, smooth, variable=\x, dblue,shift={(-0.5,-0.5)}]  plot ({\x}, {2/(\x+1)});
    \draw[ thick, domain=1.613:2,gray , variable=\x,shift={(-0.5,-0.5)}]  plot ({\x}, {(\x)/((\x)^2-1)});
    \draw[name path=F5, domain=0.77:1.000, smooth, variable=\y,shift={(-0.5,-0.5)},thick,dgreen] plot ({stau1(\y)}, {\y});
   \draw[name path=F6, domain=0.66666: 0.77724, smooth, variable=\y,shift={(-0.5,-0.5)},thick, orange] plot ({stau2(\y)}, {\y});
   \tikzfillbetween[of=F1 and F4,on layer=main]{lgold, opacity=0.7};
    \tikzfillbetween[of=F2 and F5,on layer=main]{lgold, opacity=0.7};
    \tikzfillbetween[of=F3 and F6,on layer=main]{lgold, opacity=0.7};
    \draw [thick, dashed,gray] (0,0.5) -- (0.5,0.5);
    \draw [thick, dashed,,gray] (1.11,0)--(1.115,0.5);
    \draw [thick, dashed,gray] (1.5,0) -- (1.5,0.17);
    \draw [thick, dashed,gray] (0.5,0) -- (0.5,0.5);
     \draw [thick, dashed,gray,shift={(-0.5,-0.5)}] (1.85364,0.5) -- (1.85364,0.77724);
     
    \draw [fill,shift={(-0.5,-0.5)}] (1,1) circle [radius=0.005];
    \draw [fill,shift={(-0.5,-0.5)}] (1.615,1) circle [radius=0.005];
    \draw [fill,shift={(-0.5,-0.5)}] (2,0.67) circle [radius=0.005];
    \draw (-0.15,0.7) node {$\tau(K)$};
    \draw (-0.1,0.5) node {$1$};
    \draw (0.5,-0.1) node {$1$};
    \draw (1.115,-0.1) node {$\varphi$};%
    \draw (1.5,-0.1) node {$2$};
    \draw (1.35364,-0.1) node {$\hat s$};
     \end{tikzpicture}
     \caption{Region of possible values for the parameter $\tau(K)$ for Minkowski centered $K \in\K^2$ (yellow): $\tau(K) \geq \frac{2}{s+1}$ (blue); $\tau(K) \leq c(s(K))$ (red, green and orange, resp.). 
     The black dots are given by $0$-symmetric $K \in\K^2$ ($s=1$), the Golden House $\GH$ ($s=\varphi$) and triangles ($s=2$).
      }
     \label{fig:alpha-region}
\end{figure}

In \cite{BDG,BDG1,BDG2} two additional means of convex bodies and their relations are considered. The harmonic mean of $K\in\K^n$ is defined as $\left( \frac{K^{\circ}-K^{\circ}}{2} \right)^{\circ}$ and the maximum as $\conv\left( K \cup (-K) \right)$. Understanding their containment factors helps for instance, to prove new inequalities for diameter variants \cite{BR_diametervariants}. For $K \in \K^n$ we define $\gamma(K)>0$ such that 
\[
\left( \frac{K^{\circ}-K^{\circ}}{2} \right)^{\circ} \subset^{\opt} \gamma(K) \, \conv\left( K \cup (-K) \right).
\]
As with $\alpha(K)$ and $\tau(K)$, $\gamma(K)$ does not only depend on the Minkowski asymmetry of $K$. Using polarity and \Cref{thm:PlanarCase}, we are able to show the following.
\begin{cor}
    \label{cor:GammaPlanarCase}
 
Let $K \in\K^2$ be Minkowski centered. Then 
\[
\frac{2}{s(K)+1} \leq \gamma(K) \leq c(s(K)).
\] 
Moreover, for every pair
$(\gamma,s)$, such that $1 \leq s \leq 2$ and $\frac{2}{s+1} \leq \gamma\leq c(s(K))$, there exists a Minkowski centered $K \in\K^2$, such that $s(K)=s$ and $\gamma(K)=\gamma$.
\end{cor}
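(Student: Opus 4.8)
The plan is to deduce \Cref{cor:GammaPlanarCase} from \Cref{thm:PlanarCase} purely by polarity, via the identity
\[
\gamma(K) = \tau(K^\circ) \qquad \text{for every } K \in \K^2 \text{ (indeed every } K \in \K^n \text{ with } 0 \in \inter(K)\text{)}.
\]
Once this is established, and once one knows that passing to the polar body preserves both ``$0$ is a Minkowski center'' and the value $s(K)$, the corollary follows by applying \Cref{thm:PlanarCase} to $K^\circ$.

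First I would collect the elementary polarity facts: for a convex body with $0 \in \inter(K)$ one has $(K^\circ)^\circ = K$, $(-K)^\circ = -(K^\circ)$, and $(K \cap (-K))^\circ = \conv(K^\circ \cup (-K^\circ))$. Next, the four bodies $\frac{K-K}{2}$, $\conv(K \cup (-K))$, $K \cap (-K)$, and $\big(\frac{K^\circ - K^\circ}{2}\big)^\circ$ are all $0$-symmetric; for $0$-symmetric $A, C$ an optimal containment $A \subset^{\opt} \lambda C$ is equivalent to $\lambda = R(A, C)$, because a translate $A \subset t + \rho C$ forces, by $A = -A \subset -t + \rho C$ and convexity, $A \subset \rho C$, so concentric position is optimal. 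Polarity then reverses such containments, $R(A, C) = R(C^\circ, A^\circ)$, since $A \subset \rho C \Leftrightarrow C^\circ \subset \rho A^\circ$. Applying this to the defining inclusion $\big(\frac{K-K}{2}\big)^\circ \subset^{\opt} \gamma(K^\circ)\, \conv(K^\circ \cup (-K^\circ))$ of $\gamma(K^\circ)$ and taking polars gives $K \cap (-K) \subset^{\opt} \gamma(K^\circ)\, \frac{K-K}{2}$, i.e. $\gamma(K^\circ) = \tau(K)$; replacing $K$ by $K^\circ$ yields $\gamma(K) = \tau(K^\circ)$.

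Then I would check that polarity preserves Minkowski-centeredness and the asymmetry: if $0$ is a Minkowski center of $K$, then $K \subset -s(K) K$, and taking polars gives $-\frac{1}{s(K)} K^\circ \subset K^\circ$, hence $K^\circ \subset -s(K) K^\circ$, so $s(K^\circ) \leq s(K)$; by symmetry $s(K^\circ) = s(K)$ and $0$ is a Minkowski center of $K^\circ$. Now, for Minkowski centered $K \in \K^2$, \Cref{thm:PlanarCase} applied to $K^\circ$ gives $\frac{2}{s(K)+1} = \frac{2}{s(K^\circ)+1} \leq \tau(K^\circ) \leq c(s(K^\circ)) = c(s(K))$, and $\tau(K^\circ) = \gamma(K)$ gives the asserted inequalities. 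For the realization part, given $(\gamma, s)$ with $1 \leq s \leq 2$ and $\frac{2}{s+1} \leq \gamma \leq c(s)$, \Cref{thm:PlanarCase} supplies a Minkowski centered $K_{s,\gamma} \in \K^2$ with $s(K_{s,\gamma}) = s$ and $\tau(K_{s,\gamma}) = \gamma$; then $K := (K_{s,\gamma})^\circ$ is Minkowski centered with $s(K) = s$ and $\gamma(K) = \tau((K_{s,\gamma})^{\circ\circ}) = \tau(K_{s,\gamma}) = \gamma$. The only delicate points are the behaviour of optimal containment under polarity for these symmetric bodies and the invariance of the Minkowski center; both are routine (and are essentially already available in \cite{BDG1}), so the proof is short.
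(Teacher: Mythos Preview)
Your proposal is correct and follows essentially the same route as the paper: establish $\gamma(K)=\tau(K^\circ)$ by polarity, show that polarity preserves Minkowski-centeredness and the value $s(K)$, and then apply \Cref{thm:PlanarCase} to $K^\circ$; the realization part likewise uses the polars of the examples $K_{s,\tau}$.

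The one place where your write-up is thinner than the paper is the claim ``by symmetry $s(K^\circ)=s(K)$ and $0$ is a Minkowski center of $K^\circ$''. As stated this is circular: to run the reverse inequality $s(K)\le s(K^\circ)$ by your argument you would already need $0$ to be a Minkowski center of $K^\circ$. The paper closes this by invoking \Cref{cor:zero-inside}: the triple of well-spread asymmetry points of $K$ (with $0$ in the interior of their convex hull) together with their outer normals certify $K\subset^{\opt}-s(K)K$ via \Cref{prop:Opt_Containment}, and under polarity this touching certificate transfers to one for $-K^\circ\subset^{\opt} s(K)K^\circ$, giving simultaneously $s(K^\circ)=s(K)$ and that $0$ is a Minkowski center of $K^\circ$. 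You flag this step as ``delicate'' and defer to \cite{BDG1}, which is acceptable, but spelling out the touching-point argument (or citing the precise statement) would make the proof self-contained.
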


\begin{proof}[Proof of \Cref{cor:GammaPlanarCase}]

We may assume w.l.o.g. $s(K) \neq 1$. Thus, by \Cref{cor:zero-inside}, there exists a triple of well-spread asymmetry points of $K$ that contain zero in their convex hull. 
Thus, $K \optc -s(K)K$ implies $-K^{\circ} \optc s(K)K^{\circ} $, i.e., $s(K^{\circ})=s(K)$ and $K^{\circ}$ is Minkowski-centered. 
    
Now, on the one hand, polarising 
    \begin{equation*}
        K^{\circ} \cap (-K^{\circ}) \optc\tau(K^{\circ})\frac{K^{\circ}-K^{\circ}}{2}, 
    \end{equation*}  
    we obtain
   \begin{equation*}
        \frac{1}{\tau(K^{\circ})} \left( \frac{K^{\circ}-K^{\circ}}{2} \right)^{\circ} \subset \left(K^{\circ} \cap (-K^{\circ})\right)^{\circ}= \conv\left(K\cup (-K) \right),  
    \end{equation*}
and hence, $ \gamma(K)\leq \tau(K^{\circ})$. While on the other hand, polarising 
\begin{equation*}
\left( \frac{K^{\circ}-K^{\circ}}{2} \right)^{\circ} \optc \gamma(K) \conv \left(K\cup (-K) \right)
\end{equation*} implies $\gamma(K)\geq \tau(K^{\circ})$. Thus, by \Cref{thm:PlanarCase}, 
 
\begin{equation*}
 \frac{2}{s(K)+1}  \leq \gamma(K)=  \tau(K^{\circ})\leq c(s(K^{\circ})) = c(s(K)). 
\end{equation*} 

Finally, for the bodies $K_{s,\tau}$ described in \Cref{thm:PlanarCase}, $s(K_{s,\tau}^{\circ})=s$ and $\gamma(K_{s,\tau}^{\circ})=\tau$,  showing that for every pair
$(\gamma,s)$, such that $1 \leq s \leq 2$ and $\frac{2}{s+1} \leq \gamma\leq c(s(K))$, there exists a Minkowski centered $K \in\K^2$, such that $s(K)=s$ and $\gamma(K)=\gamma$.
\end{proof}

\section{Diameter-width-ratio for (pseudo-)complete sets}

We recall the characterization of pseudo-completeness from \cite{BrG2}. 
 \begin{proposition}\label{prop:pseudo-complete-charact}
  Let $K \in \K^n$ and $C \in \K_0^n$. Then the following are equivalent:
 \begin{enumerate}[(i)]
 \item $K$ is pseudo-complete \wrt~$C$,
  \item $(s(K)+1) r(K,C)= r(K,C)+R(K,C) = \frac{s(K)+1}{s(K)} R(K,C)=  D(K,C)$, and
  \item for every incenter $c$ of $K$ w.r.t. $C$ we have
   \[\frac{s(K)+1}{2s(K)}(-(K-c)) \subset \frac{K-K}2 \subset \frac12 D(K,C) C \subset \frac{s(K)+1}2 (K-c)\]
  is left-to-right optimal, which implies that $c$ is also a circumcenter and a Minkowski center of $K$.
  \end{enumerate}
  \end{proposition}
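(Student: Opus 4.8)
The plan is to run the ring of implications $(ii)\Rightarrow(i)\Rightarrow(ii)$ (so really $(i)\Leftrightarrow(ii)$) and then $(ii)\Leftrightarrow(iii)$, treating $(ii)$ as the numerical shadow of the geometric containment chain in $(iii)$. The implication $(ii)\Rightarrow(i)$ is free, since $r(K,C)+R(K,C)=D(K,C)$ is literally one of the equalities in $(ii)$. For the rest I would lean on the \cite{BrG2} characterization recalled in the introduction — that $r(K,C)+R(K,C)=D(K,C)$ is equivalent to $\frac{K-K}2\subset^{\opt}\frac{D(K,C)}2C\subset^{\opt}\frac{s(K)+1}2(K-c)\cap(-(K-c))$ for every incenter $c$, read in its left-to-right optimal form — together with one elementary asymmetry inequality, after which everything is bookkeeping with Minkowski sums and $\lambda C\pm\mu C=(\lambda+\mu)C$ for $0$-symmetric $C$.

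The asymmetry inequality I would isolate first is: $R(K,C)\ge s(K)\,r(K,C)$ for every $0$-symmetric $C$. Its proof is short: after scaling, put an incenter of $K$ at the origin, so $r(K,C)C\subset K$; since $r(K,C)C$ is $0$-symmetric it equals its own reflection, hence $r(K,C)C\subset -K$ as well, so $r(K,C)C\subset(-K)$; using $s(K)=R(K,-K)$ and monotonicity of $R(K,\cdot)$ in the gauge, $R(K,r(K,C)C)\ge R(K,-K)$, i.e. $r(K,C)^{-1}R(K,C)\ge s(K)$. Now assume $(i)$. From the right half of the \cite{BrG2} chain, $\frac{D(K,C)}{s(K)+1}C\subset(K-c)\cap(c-K)\subset K-c$, and since $c$ is an incenter this forces $D(K,C)\le(s(K)+1)\,r(K,C)$. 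Combining with $R(K,C)\ge s(K)r(K,C)$ and $D(K,C)=R(K,C)+r(K,C)$ gives $D(K,C)=R(K,C)+r(K,C)\ge s(K)r(K,C)+r(K,C)=(s(K)+1)r(K,C)\ge D(K,C)$, so all three are equal; hence $R(K,C)=s(K)r(K,C)$ and $D(K,C)=(s(K)+1)r(K,C)=\frac{s(K)+1}{s(K)}R(K,C)$, which is $(ii)$.

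For $(ii)\Rightarrow(iii)$: let $c$ be any incenter; then $r(K,C)C\subset K-c$, and being $0$-symmetric it also lies in $c-K$, hence in $M_c:=(K-c)\cap(c-K)$, so $r(M_c,C)=r(K,C)$ and $\tfrac{D(K,C)}2C=\tfrac{s(K)+1}2\,r(K,C)C\subset\tfrac{s(K)+1}2M_c$ with equal homothety factor, i.e. optimally; the inclusion $\frac{K-K}2\subset^{\opt}\frac{D(K,C)}2C$ is automatic from the definition of $D(K,C)$. From $\frac{s(K)+1}{2s(K)}(-(K-c))\subset\frac{K-K}2\subset\frac{D(K,C)}2C$ one gets $c-K\subset\frac{s(K)D(K,C)}{s(K)+1}C=s(K)r(K,C)C\subset s(K)(K-c)$, so $c$ is a Minkowski center (which also validates the leftmost containment), and $K-c\subset s(K)r(K,C)C=R(K,C)C$, so $c$ is a circumcenter; the leftmost containment for a Minkowski center follows from $\frac{K-K}2=\frac{(K-c)+(c-K)}2\supset\frac{\frac1{s(K)}(c-K)+(c-K)}2=\frac{s(K)+1}{2s(K)}(c-K)$. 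Conversely, from the left-to-right optimal chain in $(iii)$, optimality of $\frac{D(K,C)}2C\subset^{\opt}\frac{s(K)+1}2(K-c)$ yields $r(K-c,C)=\frac{D(K,C)}{s(K)+1}$, hence $D(K,C)=(s(K)+1)r(K,C)$, while the left part gives $R(K,C)\le s(K)r(K,C)$, which together with the inequality above is an equality; then $D(K,C)=(s(K)+1)r(K,C)=R(K,C)+r(K,C)$ recovers $(i)$ and $(ii)$, and the center statements come out as in the forward direction. The step I expect to be the real obstacle is not any of this bookkeeping but the input that the \cite{BrG2} chain holds \emph{left-to-right optimally} — equivalently that $D(K,C)=(s(K)+1)r(K,C)$ rather than merely $\le$ — which is exactly where completeness is used, through the touching-point description of optimal containment in \Cref{prop:Opt_Containment}; everything downstream of that is elementary.
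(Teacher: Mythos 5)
The paper itself offers no proof of \Cref{prop:pseudo-complete-charact} -- it is quoted from \cite{BrG2} -- so your proposal must stand on its own. Most of it does: the inequality $R(K,C)\ge s(K)\,r(K,C)$ for $C\in\K^n_0$ and its use in $(i)\Rightarrow(ii)$, the trivial $(ii)\Rightarrow(i)$, and the direction $(iii)\Rightarrow(ii)$ are all sound (modulo leaning on the containment chain recalled from \cite{BrG2} in the introduction, which is a legitimate external input here).

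The genuine gap is in $(ii)\Rightarrow(iii)$, where your handling of the leftmost inclusion and of the centre statements is circular. You deduce that an incenter $c$ is a Minkowski center \emph{from} the chain $\frac{s(K)+1}{2s(K)}(c-K)\subset\frac{K-K}2\subset\frac{D(K,C)}2C$, i.e.\ from the leftmost inclusion of $(iii)$, which is part of what you are proving in this direction; and you justify that leftmost inclusion only ``for a Minkowski center'', via $K-c\supset\frac{1}{s(K)}(c-K)$. Neither fact is derived from $(ii)$ alone, and neither is automatic: for an arbitrary incenter $c$ the inclusion $\frac{s(K)+1}{2s(K)}(c-K)\subset\frac{K-K}2$ is a genuine constraint on the \emph{position} of $c$ (a translate of that set always fits, the specific copy need not). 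The missing ingredient is the classical ``concentricity'' argument that $D=r+R$ forces an incenter to be a circumcenter: for $x\in K$ one has $x-c-r(K,C)C\subset K-K\subset D(K,C)\,C$, hence $\|x-c\|_C\le D(K,C)-r(K,C)=R(K,C)$, so $K-c\subset R(K,C)C=s(K)r(K,C)C\subset s(K)(c-K)$ (using $r(K,C)C\subset c-K$), i.e.\ $c$ is a circumcenter and a Minkowski center; only after this does your computation $\frac{K-K}2\supset\frac{\frac{1}{s(K)}(c-K)+(c-K)}2=\frac{s(K)+1}{2s(K)}(c-K)$ become available. Two smaller points to tighten: the optimality you actually verify is of $\frac{D(K,C)}2C$ inside $\frac{s(K)+1}2\bigl((K-c)\cap(c-K)\bigr)$, whereas $(iii)$ asserts it inside $\frac{s(K)+1}2(K-c)$ -- the correct justification is that a smaller dilate would contradict $r(K,C)=\frac{D(K,C)}{s(K)+1}$; and you should fix a precise reading of ``left-to-right optimal'' (optimality of the leftmost set in the rightmost, which then passes to every intermediate inclusion), since the leftmost inclusion's optimality is exactly the Minkowski-center property you must first establish.
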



We show for the diameter-width ratio bound for pseudo-complete sets in the planar case. 
\begin{proof}[Proof of \Cref{thm:results_pscomp}]
We assume w.l.o.g.~that $C\optc K$. Thus, $r(K,C)=1$ and by \Cref{prop:pseudo-complete-charact}, $K$ is Minkowski centered.
Abbreviating $s:=s(K)$ again, we obtain $D(K,C)=(s+1)r(K,C)=s+1$ and 
\begin{equation*}
 \frac{K-K}{2} \subset \frac{D(K,C)}{2} C = \frac{s+1}{2} C \subset \frac{s+1}2 K \cap (-K)
\end{equation*}
from \Cref{prop:pseudo-complete-charact}.
Thus, $C \subset K \cap (-K)$, which implies $w(K,C) \geq w(K,K \cap (-K))$ and since $w(K,K \cap (-K)) = w\left(\frac{K-K}{2},K \cap (-K)\right) = 2r\left(\frac{K-K}{2},K \cap (-K)\right)$ (see \cite{Sch} for basic properties of the width)
\begin{equation} \label{eq:Dw_chain}
    \frac{D(K,C)}{w(K,C)} =\frac{s+1}{w(K,C)} \leq \frac{s+1}{2 r\left(\frac{K-K}2, K \cap (-K)\right)} 
    = \frac{s+1}2 \tau(K). 
\end{equation}
Since by \Cref{thm:PlanarCase} $\tau(K) \leq c(s(K)),$ we obtain
\[
\frac{D(K,C)}{w(K,C)} \leq \frac{s(K)+1}{2} c(s(K)).
\]

One can observe that 
\[ 
\frac{s+1}{2} c(s) =
    \begin{dcases}
    \frac{s+1}{2} & s \leq \varphi, \\
    \frac{(s^2+1)^2}{2(s-1)\left(s^2+2s-1 +2\sqrt{s(s^2-1)}\right)} & \varphi <s \leq \hat s, \\
    \frac{s^2-2s-1}{s-3}  & \hat s < s \leq 2  \\
    \end{dcases}
\]
attains its maximum if and only if $s=\varphi$. Indeed, $\frac{s+1}{2}$ is increasing for all $s$, while for $\varphi <s \leq \hat s$ holds that the derivative 
\[ 
\left(\frac{(s^2+1)^2}{2(s-1)\left(s^2+2s-1 +2\sqrt{s(s^2-1)}\right)} \right)'= \frac{(s^2-2s-1)(s^2+s-\sqrt{s(s^2-1)}}{2s(s+1)(s-1)^2} \leq 0 
\]
and for $\hat s < s \leq 2$ holds
\[ 
\left( \frac{s^2-2s-1}{s-3} \right)'= \frac{s^2-6s+7}{(s-3)^2} \leq 0. 
\]

Thus, 
\[
\frac{D(K,C)}{w(K,C)} \leq  \frac{\varphi+1}2 \approx 1.31.
\]

Now let $\tilde K, \tilde C \in \K^2$ be Minkowski centered, such that $\tilde K$ is pseudo-complete w.r.t. $\tilde C$ and 
\[
\frac{D(\tilde K,\tilde C)}{w(\tilde K,\tilde C)} = \frac{\varphi+1}2.   
\]
Then, as shown above, $s(\tilde K)= \varphi$ and we immediately see that $\tau(\tilde K)=1$. 
By \Cref{prop:old_results} this implies $\alpha(\tilde K)=1$ and thus by \cite[Theorem 1.1]{BDG}, there exists a linear transformation $L$ such that $L(K)=\mathbb{GH}$. We may choose $\tilde C=\GH\cap(-\GH)$ to make sure $\tilde K$ is pseudo-complete w.r.t. $\tilde C$. 

Combining the results, we obtain 
\[
\frac{D(K,C)}{w(K,C)} \leq \frac{s(K)+1}{2} c(s(K)) \leq \frac{D(\GH,\GH\cap(-\GH))}{w(\GH,\GH\cap(-\GH))} = \frac{\varphi+1}2 \approx 1.31.
\]

Finally, we show that for every pair $(\rho,s)$, with 
\[ 
1 \leq  \rho \leq \frac{s+1}{2} c(s)= 
    \begin{dcases}
     \frac{s+1}{2} & s \leq \varphi, \\
    \frac{(s^2+1)^2}{2(s-1)\left(s^2+2s-1 +2\sqrt{s(s^2-1)}\right)} & \varphi <s \leq \hat s, \\
    \frac{s^2-2s-1}{s-3}  & \hat s < s \leq 2,  \\
    \end{dcases}
\]
there exists some Minkowski centered $K$, s.t.~$s(K)=s$, and a set $C$, s.t.~$K$ is pseudo-complete w.r.t. $C$ and $\frac{D(K,C)}{w(K,C)}=\rho$ (c.f.~\Cref{fig:Dwratio}).

To do so, let $K_s$ be defined such that it is Minkowski centered, $s(K_s)=s \in [1,2]$ and $\tau(K_s)=c(s)$. Then define $C_\lambda=(1-\lambda) \left( \frac{K_s-K_s}{2} \right) + \lambda \frac{s+1}{2} (K_s \cap (-K_s))$ with $\lambda \in [0,1]$. This way $C_\lambda$ is a convex combination of $\frac{K_s-K_s}2$ and $\frac{s+1}{2}(K_s \cap(-K_s))$, and therefore $K_s$ is pseudo-complete w.r.t. $C_\lambda$ with $D \left(K_s,C_\lambda\right)=2$ by \Cref{prop:pseudo-complete-charact}.

We have
\begin{align*}
w(K_s,C_1) &= w \left(\frac{K_s-K_s}{2},\frac{s+1}{2} K_s \cap (-K_s)\right)= \frac{2}{s+1} w \left(\frac{K_s-K_s}{2},\frac{s+1}{2} K_s \cap (-K_s)\right) \\
&=  \frac{4}{s+1} r\left(\frac{K_s-K_s}{2},\frac{s+1}{2} K_s \cap (-K_s)\right) = \frac{4}{s+1} \frac{1}{\tau(K_s)}. 
\end{align*}
Thus, 
\[
\frac{D(K_s,C_1)}{w(K_s,C_1)}= \frac{s+1}{2} \tau(K_s).
\]
Moreover, since $\frac{D(K_s,C_0)}{w(K_s,C_0)}=1$, we have
\[
\left\{ \frac{D(K_s,C_\lambda)}{w(K_s,C_\lambda)}, 0 \leq \lambda \leq 1 \right\} 
=  \left[1,  \min \left\{ \frac{s+1}{2}, \frac{s+1}{2} c(s) \right\}  \right].
\]
\end{proof}


\newpage


\bigskip

Katherina von Dichter -- 
Brandenburg University of Technology Cottbus-Senftenberg (BTU), Department of Mathematics, Germany. \\
\textbf{katherina.vondichter@b-tu.de}

Mia Runge -- 
Technical University of Munich (TUM), School of Computation, Information and Technology, Department of Mathematics, Germany. \\
\textbf{mia.runge@tum.de}

\end{document}